\setlist[itemize]{label=\textbullet}
\theoremstyle{plain}
\newtheorem{thm}{Theorem}
\numberwithin{thm}{section}
\newtheorem{prop}[thm]{Proposition}
\newtheorem{cor}[thm]{Corollary}
\newtheorem{lem}[thm]{Lemma}
\newtheorem{defi}[thm]{Definition}
\newtheorem{propdef}[thm]{Proposition-Definition}
\newtheorem{conj}[thm]{Conjecture}
\newtheorem*{thm*}{Theorem}
\newtheorem*{conj*}{Conjecture}
\newtheorem*{prop*}{Proposition}
\theoremstyle{definition}
\newtheorem{ex}[thm]{Example}
\newtheorem{rmk}[thm]{Remark}
\tikzset{
    labl/.style={anchor=south, rotate=90, inner sep=.5mm}
}
\def\subsection{\@startsection{subsection}{2}%
  \z@{.5\linespacing\@plus.7\linespacing}{.3\linespacing}%
  {\normalfont\bfseries}}
\newcommand{\Sym}{\mathfrak{S}}
\newcommand{\M}{\Sigma}
\newcommand{\pre}[2]{\prescript{#1\!\!}{}{#2}}
\newcommand{\diag}[1]{\text{diag}(#1)}
\title[Dehornoy's class and Sylows for set-theoretical solutions of the YBE]{Dehornoy's class and Sylows for set-theoretical solutions of the Yang--Baxter equation}
\author{Edouard Feingesicht}
\address{Normandie Univ, UNICAEN, CNRS, LMNO, 14000 Caen, France}
\email{edouard.feingesicht@unicaen.fr}
\subjclass[2020]{16T25, 20N02, 20C10}
\keywords{Yang--Baxter equation, Garside monoid, Cycle set, Monomial representation, Sylow, Zappa-Szép, Braces}
\begin{document}
\begin{abstract}
We explain how the germ of the structure group of a cycle set decomposes as a product of its Sylow-subgroups, and how this process can be reversed to construct cycle sets from ones with coprime classes. We study the Dehornoy's class associated to a cycle set, and conjecture a bound that we prove in a specific case. We combine the use of braces and a monomial representation, in particular to answer a question by Dehornoy on retrieving the Garside structure without a theorem of Rump, while also retrieving said theorem.
\end{abstract}
\maketitle
\tableofcontents
\setcounter{section}{-1}
\section{Introduction}\label{intro}
In 1992 Drinfeld (\cite{drinfeld}) posed the question of classifying set-theoretical solutions of the (quantum) Yang--Baxter equation, given by pairs $(X,r)$ where $X$ is a set, $r\colon X\times X\to X\times X$ a bijection satisfying $r_1r_2r_1=r_2r_1r_2$ where $r_i$ acts on the $i$ and $i+1$ component of $X\times X\times X$. In \cite{etingof}, the authors propose to study solutions which are involutive ($r^2=\text{id}_{X\times X}$) and non-degenerate (if $r(x,y)=(\lambda_x(y),\rho_y(x))$ then for any $x\in X$, $\lambda_x$ and $\rho_x$ are bijective).  Since then, many advances have been made on this question and objects introduced: structure group (\cite{etingof}), I-structure (\cite{istruct}), etc. Many equivalent objects are known, but in particular here we are interested in cycle sets, introduced by Rump (\cite{rump}). Dehornoy (\cite{rcc}) then studied the structure group (from cycle sets) seen from a Garside perspective (divisibility, word problem, ...), he then concludes with a faithful representation, which will be the base of this article. Starting from this representation, we obtain new proofs of the Garsideness of the structure group and of the non-degeneracy of finite cycle sets, answering a question of Dehornoy in \cite{dehTalk}, and in general providing a combinatorial and algorithmical approach. Then we study finite quotient defined through an integer called the Dehornoy's class of the solution, those quotients are called germs because they come with a natural way to recover the structure monoid and its Garside structure. We state the following conjecture on Dehornoy's class (Conjecture \ref{conj1}): 
\begin{conj*} Let $S$ be a cycle set of size $n$. The Dehornoy's class $d$ of $S$ is bounded above by the ``maximum of different products of partitions of $n$ into distinct parts'' and the bound is minimal, i.e. $$d\leq\max\left(\left\{\prod\limits_{i=1}^k n_i\middle|k\in\mathbb N, 1\leq n_1<\dots<n_k, n_1+\dots+n_k=n\right\}\right).$$
\end{conj*}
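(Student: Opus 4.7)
The conjecture has two parts: the upper bound, writing $M(n)$ for the maximum product over partitions of $n$ into distinct parts, and the minimality assertion that some cycle set of size $n$ achieves $d=M(n)$. I would attack each via different techniques, relying on the Sylow decomposition and the monomial representation developed earlier in the paper.

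For minimality I would argue constructively. Given a partition $n=n_1+\dots+n_k$ into distinct parts attaining $M(n)$, the plan is to exhibit a cycle set of size $n$ with Dehornoy's class $\prod n_i$. As building blocks, take cyclic cycle sets $S_i$ of size $n_i$ (for example given by $\sigma_x(y)=y-1$ on $\mathbb{Z}/n_i$), each of which is well known to have Dehornoy's class equal to $n_i$. The remaining task is to assemble them into a single cycle set of total size $n$ in such a way that the Dehornoy's class of the combination is exactly the product. When the $n_i$ are pairwise coprime, the paper's coprime reverse-Sylow construction yields this directly. In general the $n_i$ need not be coprime (e.g.\ $\{2,4\}$ for $n=6$), so a more delicate construction via Zappa--Sz\'ep products, as hinted at in the keywords, would be required.

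For the upper bound, I would proceed via Sylow decomposition. The germ $G_S$ decomposes as a direct product $\prod_p G_p$ of its Sylow subgroups, and the Dehornoy's class factors accordingly as $d=\prod_p p^{a_p}$. Via the monomial representation there should be a matching decomposition of $S$ into ``$p$-components'' of sizes $n_p$ with $\sum_p n_p\leq n$. The key step is to bound $p^{a_p}\leq n_p$ for each $p$, which I would attempt by analyzing how a generator of a cyclic subgroup of order $p^{a_p}$ acts monomially on its $p$-component: since its orbits have length dividing $p^{a_p}$ and at least one has length exactly $p^{a_p}$, the support size must be at least $p^{a_p}$.

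The main obstacle I anticipate is reconciling the resulting bound $d\leq\prod_p n_p$ with the sharper distinct-parts bound of the conjecture: the Sylow argument bounds $d$ by a product of integers $n_p$ indexed by distinct primes, not by distinct integers, and one must still explain why the $n_p$ can be taken to be distinct as integers summing to at most $n$. Bridging this gap likely requires a finer structural input about the interaction between $p$-components for different $p$ (perhaps via the monomial action of $\Delta$ and careful bookkeeping of its cycle structure), which is presumably why only a special case is established in the paper.
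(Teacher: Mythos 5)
This statement is a \emph{conjecture} in the paper: the author does not prove it in general, but only establishes the upper bound under the extra hypotheses that $S$ is square-free and the permutation group $\mathcal G_S$ is abelian (Proposition \ref{conj-proof}), and does not prove minimality of the bound at all. So no complete proof exists to match yours against; the relevant comparison is with the paper's partial argument, which runs quite differently from your plan. There, one shows $d$ divides the exponent of $\mathcal G_S$ (via the additive brace structure on $G$ and Proposition \ref{ddivG}); for abelian $\mathcal G_S$ the exponent is the order of an actual element of $\Sym_n$, hence at most the Landau function $g(n)$; and $g(n)\leq a_n$ because the lcm of a partition of $n$ is unchanged when repeated parts are discarded and is dominated by the product of the remaining distinct parts. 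Note that this last step is precisely where the ``distinct parts'' condition enters --- the point your Sylow approach struggles to recover.

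The concrete gap in your upper-bound plan is the asserted ``matching decomposition of $S$ into $p$-components of sizes $n_p$ with $\sum_p n_p\leq n$.'' No such decomposition of the \emph{set} $S$ exists in the paper's framework: the Sylow decomposition of Section 4 (Lemma \ref{sylow}) decomposes the germ $\overline G$ as a Zappa--Sz\'ep product (not a direct product) of the subgroups $\overline G^{[\beta_i]}$, and each of these is the germ of a cycle set structure $S^{[\beta_i]}$ on the \emph{same} underlying set of size $n$, not on a subset of size $n_p$. Without an object $n_p$ with $\sum n_p\leq n$, the key inequality $p^{a_p}\leq n_p$ has nothing to bound against, and the whole upper-bound argument cannot get started; the obstacle you flag at the end (distinct primes versus distinct integer parts) is real but comes after this more basic missing ingredient. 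On the minimality side, your cyclic building blocks of class $n_i$ are sensible, but the paper's $\bowtie_d$ construction explicitly requires \emph{coprime} classes (Bézout is used to produce the generators), so the non-coprime case (e.g.\ $\{2,4\}$ for $n=6$) that you acknowledge is exactly the part for which no technique is available, in your proposal or in the paper.
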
 
And prove it under the following conditions (Proposition \ref{conj-proof}) : 
\begin{prop*} If $S$ is square-free and its permutation group $\mathcal G$ abelian then the conjecture holds.
\end{prop*}
We then focuses on the germ and its Sylows, with the main result on cycle sets being constructed from the Zappa--Szép product of germs (Theorem  \ref{sylows}), this product being a sort of generalized semi-direct products where each term acts on the others :  
\begin{thm*}
Any finite cycle set can be constructed from the Zappa--Szép product of the germs of cycle sets of class a prime power.\end{thm*}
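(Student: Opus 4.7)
The plan is to apply the two results announced in the abstract — the decomposition of the germ into Sylow subgroups on one hand, and the reverse Zappa--Szép construction from cycle sets with coprime classes on the other — and combine them by induction on the number $k$ of distinct prime divisors of the Dehornoy's class $d$ of $S$.

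When $k=1$, the class $d$ is itself a prime power and there is nothing to prove: $S$ is a (trivial) Zappa--Szép product with a single factor. For the inductive step, write $d = p^{a}\cdot m$ with $p\nmid m$. Applying the Sylow decomposition of the germ $G$ of $S$, one obtains an internal factorization $G \cong P \bowtie H$, where $P$ is the Sylow $p$-subgroup and $H$ is a complement of order coprime to $p$. Through the monomial representation and the correspondence between cycle sets and their germs established earlier in the paper, both $P$ and $H$ are realized as germs of honest cycle sets $S_{P}$ and $S_{H}$, whose classes are respectively a power of $p$ and a divisor of $m$. The reverse construction then re-assembles $S$ as the Zappa--Szép product $S_{P}\bowtie S_{H}$. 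Since $S_{H}$ has a class with at most $k-1$ distinct prime divisors, the induction hypothesis applies and yields the claimed decomposition into factors of prime-power class.

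The main obstacle is making the two ingredients cohere at the cycle-set level rather than merely at the group level. Concretely, one must verify that the Zappa--Szép factorization $G\cong P\bowtie H$ obtained from the Sylow analysis lifts, through the monomial representation, to an actual Zappa--Szép decomposition of the cycle set $S$ itself: the mutual actions of $P$ and $H$ must restrict to maps on the generating sets of $S_{P}$ and $S_{H}$ and satisfy the cycle-set axioms after restriction. A secondary delicate point is to confirm that the class of the Sylow factor cycle set is exactly the prime-power part rather than a proper divisor; this should follow from the fact that the Dehornoy's class is the exponent (or a closely related invariant) of the germ, together with the identity $\mathrm{exp}(G)=\mathrm{lcm}\bigl(\mathrm{exp}(P),\mathrm{exp}(H)\bigr)$ which is visible in the monomial picture.
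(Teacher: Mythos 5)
Your strategy is the same as the paper's: decompose the germ into its Sylow subgroups and reassemble via the Zappa--Szép product; your induction on the number of distinct primes is only a repackaging of the paper's simultaneous factorization $\overline G=\overline G^{[\beta_1]}\bowtie_d\dots\bowtie_d\overline G^{[\beta_r]}$. The \emph{obstacle} you flag --- realizing the group-level Sylow factors as germs of honest cycle sets --- is not something to be left open here: it is exactly the content of Proposition \ref{Sk} and Lemma \ref{sylow}, namely that the $p_i$-Sylow of $\overline G$ is the subgroup $\overline G^{[\beta_i]}$ generated by $S^{[\beta_i]}=\{s^{[\beta_i]}\mid s\in S\}$ with $\beta_i=d/p_i^{a_i}$, which carries a cycle set structure of class exactly $d/\gcd(d,\beta_i)=p_i^{a_i}$, while your coprime complement $H$ is $\overline G^{[p^a]}$, the germ of $S^{[p^a]}$ of class $m$. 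Your secondary worry, that the class of a Sylow factor might be a proper divisor of the prime-power part, is likewise settled by the formula for the class of $(S^{[k]},\star)$; so your outline is correct and matches the paper, but to count as a proof the phrases ``one must verify'' and ``this should follow'' need to be replaced by appeals to (or proofs of) these specific results.
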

Taking decomposability (\cite{cycle}) into account, one can consider that the "basic" cycle sets are of class and size powers of the same prime.

The sections are organized as follows: 

Section 1 contains the introducing the different tools we will need (cycle sets, monomial matrices, braces) and answering a question by Dehornoy on retrieving the Garside structure without a theorem of Rump, while also retrieving said theorem.
Section 2 consists on the study of Dehornoy's class  and the associated germ, in particular we state a conjecture on the bound of the classes and prove it in a particular case.
Section 3 focuses on the Sylow subgroups of the germ, mainly how to construct all cycle sets from ones with coprime classes through the Zappa--Szép product of germs, with a precise condition for compatibility and an explicit algorithm/formula to do so.

\textbf{Acknowledgements} The author wishes to thank Leandro Vendramin and the referee for their insightful remarks on the content and readability of this article.
\section{Preliminaries}
The goal of this section is to provide the basic definitions of the approaches used in this article : cycle sets (\cite{rump}), braces (\cite{brace}) and Dehornoy's calculus (\cite{rcc}). Then, to use those to obtain new proofs of the Garsideness of the structure group without a theorem of Rump (answering a question of Dehornoy, but moreover also recovering Rump's theorem), and finally providing a more algorithmical approach to Dehornoy's class to be used in the next section.
\subsection{Cycle sets}
We briefly recall the basic definition that we need, that were introduced by Rump in \cite{rump}.
\begin{defi}[\cite{rump}] A cycle set is a set $S$ endowed with a binary operation $*\colon S\times S\to S$ such that for all $s$ in $S$ the map $\psi(s)\colon t\mapsto s*t$ is bijective and for all $s,t,u$ in $S$:
	\begin{equation}
	\label{RCL}
	(s*t)*(s*u)=(t*s)*(t*u).
	\end{equation}
	When $S$ is finite of size $n$, $\psi(s)$ can be identified with a permutation in $\mathfrak{S}_n$.
	
	When the diagonal map is the identity (i.e. for all $s\in S$, $s*s=s$), $S$ is called square-free.
\end{defi}
From now, we fix a cycle set $(S,*)$ .
	\begin{defi}[\cite{rump}] The group $G_S$ associated with $S$ is defined by the presentation:
	\begin{equation}
	\label{RCG}
	G_S\coloneqq\left\langle S\mid  s(s*t)=t(t*s),\: \forall s\neq t\in S\right\rangle.
	\end{equation}
	Similarly, we define the associated monoid  $M_S$ by the presentation: $$M_S\coloneqq\left\langle S\mid  s(s*t)=t(t*s),\: \forall s\neq t\in S\right\rangle^+.$$	
	They will be called the structure group (resp. monoid) of $S$.
	\end{defi}
	\begin{ex}
Let $S=\{s_1,\dots,s_n\}$, $\sigma=(12\dots n)\in\Sym_n$. The operation $s_i*s_j=s_{\sigma(j)}$ makes $S$ into a cycle set, as for all $s,t$ in $S$ we have $(s*t)*(s*s_j)=s_{\sigma^2(j)}=(t*s)*(t*s_j)$.

The structure group of $S$ then has generators $s_1,\dots,s_n$ and relations $s_is_{\sigma(j)}=s_js_{\sigma(i)}$ (which is trivial for $i=j$).

In particular, for $n=2$ we find $G=\langle s,t\mid s^2=t^2\rangle$.
\end{ex}
	When the context is clear, we will write $G$ (resp. $M$) for $G_S$ (resp. $M_S$).
	
	We also assume $S$ to be finite and fix an enumeration $S=\{s_1,\dots,s_n\}$.
	\begin{rmk} 
	By the definition of $\psi\colon S\to \Sym_n$ we have that $s_i*s_j=s_{\psi(s_i)(j)}$, which we will also write $\psi(s_i)(s_j)$ for simplicity.
	\end{rmk}	
	
\subsection{Monomial matrices} We will make use of monomial matrices. We recall the definition and some basic properties:
A matrix is said to be monomial if each row and each column has a unique non-zero coefficient. We denote by $\mathfrak{Monom}_n(R)$ the set of monomial matrices over a ring $R$. To a permutation $\sigma\in\Sym_n$ we associate the permutation matrix $P_\sigma$ where the $i$-th row contains a $1$ on the $\sigma(i)$-th column, for instance $P_{(123)}=\begin{pmatrix}0&1&0\\0&0&1\\1&0&0\end{pmatrix}$.
We then have $P_\sigma \begin{pmatrix} v_1\\\vdots\\v_n\end{pmatrix}=\begin{pmatrix} v_{\sigma(1)}\\\vdots\\v_{\sigma(n)}\end{pmatrix}$ and thus, if $e_i$ is the $i$-th canonical basis vector, $P_\sigma (e_i)=e_{\sigma^{-1}(i)}$. Moreover, for $\sigma,\tau\in\Sym_n$ we find $P_\sigma P_\tau=P_{\tau\sigma}$. It is well known that a monomial matrix admits a unique (left) decomposition as a diagonal matrix right-multiplied by a permutation matrix. Thus, if $m$ is monomial, $D_m$ will denote the associated diagonal matrix, and $P_m$ the associated permutation matrix, i.e. $m=D_mP_m$, and by $\psi(m)$ we will denote the permutation associated with the matrix $P_m$.
Let $D$ be a diagonal matrix and $P$ a permutation matrix. We denote the conjugate matrix $PDP^{-1}$ as $\pre{P}{D}$, and if $\sigma$ is the permutation associated with $P$ we will also write $\pre{\sigma}{D}$. The following statements are well-known. As they will be essential throughout this paper, we state them explicitly:
\begin{lem} 
\label{conj}
Let $D$ be a diagonal matrix and $P$ a permutation matrix. Then $\pre{P}{D}$ is diagonal.

Moreover, the $i$-th row of $D$ is sent by conjugation to the $\sigma^{-1}(i)$-th row.
\end{lem}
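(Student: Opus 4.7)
The plan is to unpack the conjugation on basis vectors using the conventions already fixed in the excerpt, namely $P_\sigma(e_i)=e_{\sigma^{-1}(i)}$, which immediately gives $P_\sigma^{-1}(e_i)=e_{\sigma(i)}$. Writing $D=\diag{d_1,\dots,d_n}$ and letting $\sigma=\psi(P)$, I would compute the action of $PDP^{-1}$ on the canonical basis:
\[
PDP^{-1}(e_i)=PD(e_{\sigma(i)})=P(d_{\sigma(i)}e_{\sigma(i)})=d_{\sigma(i)}e_{\sigma^{-1}(\sigma(i))}=d_{\sigma(i)}e_i.
\]
Since each $e_i$ is mapped to a scalar multiple of itself, this shows at once that $\pre{P}{D}$ is diagonal, and reads off its entries as $\pre{P}{D}=\diag{d_{\sigma(1)},\dots,d_{\sigma(n)}}$.

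For the second assertion, I would just reinterpret the formula $(\pre{P}{D})_{ii}=d_{\sigma(i)}$: the coefficient originally sitting in row $\sigma(i)$ of $D$ ends up in row $i$ of $\pre{P}{D}$, or equivalently, setting $j=\sigma(i)$, the coefficient in row $j$ of $D$ ends up in row $\sigma^{-1}(j)$, which is precisely the stated moreover clause.

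There is essentially no obstacle here beyond bookkeeping of $\sigma$ versus $\sigma^{-1}$; the only point that needs care is to stay consistent with the convention $P_\sigma(e_i)=e_{\sigma^{-1}(i)}$ chosen in the paper (rather than the opposite convention that would swap the roles of $\sigma$ and $\sigma^{-1}$ in the final formula). Once that is fixed, both claims follow from a single one-line computation.
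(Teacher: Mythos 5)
Your computation is correct and fully consistent with the paper's conventions: from $P_\sigma(e_i)=e_{\sigma^{-1}(i)}$ one gets $\pre{P}{D}e_i=d_{\sigma(i)}e_i$, hence $\pre{P}{D}=\diag{d_{\sigma(1)},\dots,d_{\sigma(n)}}$, which agrees with the worked example $\pre{(123)}{\diag{x,y,z}}=\diag{y,z,x}$ and with the later uses such as $\pre{\psi(s)}{D_t}=D_{\psi(s)^{-1}(t)}$. The paper states this lemma as well known and gives no proof, so there is nothing to compare against; your one-line verification on basis vectors is exactly the standard argument one would supply, and you correctly flag the only real pitfall, namely keeping the $\sigma$ versus $\sigma^{-1}$ bookkeeping aligned with the convention $P_\sigma(e_i)=e_{\sigma^{-1}(i)}$.
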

In particular, this implies that, $P_\sigma D=\pre{\sigma}{D}P_\sigma$ giving a way to alternate between left and right (unique) decomposition of monomial matrices.
\begin{cor} 
\label{monprod}
Let $m,m'$ be monomial matrices. Then we have $D_{mm'}=D_m\left(\pre{\psi(m)}{D_{m'}}\right)$ and $\psi(mm')=\psi(m')\circ\psi(m)$.

To simplify notations we will sometimes only write $\pre{m}{D_{m'}}$ for $\pre{\psi(m)}{D_{m'}}$.
\end{cor}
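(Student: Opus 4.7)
The plan is a direct computation starting from the defining decompositions $m = D_m P_m$ and $m' = D_{m'} P_{m'}$, and then invoking uniqueness of the left decomposition at the end.

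First I would write
\[
mm' = D_m P_m D_{m'} P_{m'}.
\]
The only factor that is not in ``diagonal $\cdot$ permutation'' order is the middle pair $P_m D_{m'}$. To fix this, I would apply the commutation rule stated just after Lemma \ref{conj}, namely $P_\sigma D = \pre{\sigma}{D} P_\sigma$, with $\sigma = \psi(m)$ and $D = D_{m'}$. This yields
\[
mm' = D_m \, \pre{\psi(m)}{D_{m'}} \, P_m P_{m'}.
\]
By Lemma \ref{conj}, $\pre{\psi(m)}{D_{m'}}$ is diagonal, so the first two factors multiply to a diagonal matrix $D_m \pre{\psi(m)}{D_{m'}}$. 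For the last two factors, I would quote the identity $P_\sigma P_\tau = P_{\tau\sigma}$ recalled above, giving $P_m P_{m'} = P_{\psi(m')\circ\psi(m)}$.

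At this point I have exhibited $mm'$ as the product of a diagonal matrix and a permutation matrix. Since the left decomposition of a monomial matrix is unique, I can read off
\[
D_{mm'} = D_m \, \pre{\psi(m)}{D_{m'}}, \qquad P_{mm'} = P_{\psi(m')\circ \psi(m)},
\]
and the second equation immediately gives $\psi(mm') = \psi(m') \circ \psi(m)$.

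There is no real obstacle here: the argument is essentially bookkeeping and uses only the two ingredients already spelled out (the commutation rule from Lemma \ref{conj} and the composition rule $P_\sigma P_\tau = P_{\tau\sigma}$). The only point requiring mild care is the order convention for $\psi$, which is reversed relative to the matrix product precisely because of the chosen convention $P_\sigma P_\tau = P_{\tau\sigma}$; this is exactly what produces $\psi(m')\circ\psi(m)$ rather than $\psi(m)\circ\psi(m')$.
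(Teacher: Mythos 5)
Your computation is correct and is exactly the argument the paper intends: the corollary is stated as an immediate consequence of Lemma \ref{conj} via the commutation rule $P_\sigma D=\pre{\sigma}{D}P_\sigma$ and the convention $P_\sigma P_\tau=P_{\tau\sigma}$, followed by uniqueness of the left decomposition. The paper gives no separate proof, only a worked numerical example, so your write-up simply makes explicit what the paper leaves implicit.
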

\begin{ex}
let $m=\begin{pmatrix}0&a&0\\0&0&b\\c&0&0\end{pmatrix}$, $m'=\begin{pmatrix}0&0&x\\0&y&0\\z&0&0\end{pmatrix}$, which decomposes with $D_m=\text{diag}(a,b,c)$, $D_{m'}=\text{diag}(x,y,z)$ and $\psi(m)=(123)$, $\psi(m')=(13)$. We find $\psi(m')\circ\psi(m)=(13)\circ(123)=(12)$ and $$D_m\left(\pre{\psi(m)}{D_{m'}}\right)=\diag{a,b,c}\pre{(123)}{\diag{x,y,z}}=\diag{a,b,c}\diag{y,z,x}=\diag{ay,bz,cx}$$ and indeed $mm'=\begin{pmatrix}0&ay&0\\bz&0&0\\0&0&cx\end{pmatrix}=\diag{ay,bz,cx}P_{(12)}$.
\end{ex}
\begin{defi}
We will say that a subgroup of $\M_n$ is permutation-free if it contains no permutation matrices.
\end{defi}
From \cite{etingof} we know that $G$ embeds in $\mathbb Z^S\rtimes \Sym_S$ such that projecting on the first coordinate is a bijective. We provide a matricial description of this fact :
\begin{prop}[\cite{rcc}] \label{prepr}
	Let $S$ be a cycle set of size $n$. Let $q$ be an indeterminate and consider $\mathfrak{Monom}_n(\mathbb Q[q^{\pm 1}])$ with basis indexed by $S$, denote $D_{s}$ the matrix $\text{diag}(1,\dots,q,\dots,1)$ the $n\times n$ diagonal matrix with a $q$ on the $s$-th row.
	
	The map $\Theta$ defined on $S$ by 
	\begin{equation}
	\label{repr}
	\Theta(s_i)\coloneqq D_{s} P_{\psi(s)}
	\end{equation}
	extends to a faithful representation $G\to\mathfrak{Monom}_n(\mathbb Q[q,q^{-1}]$) and similarly to a bijective morphism $M\to\mathfrak{Monom}_n(\mathbb Q[q])$.
	
	Moreover, any $f$ in $G$ is uniquely determined by $D_{\Theta(f)}$ (or $\Theta(G)$ is permutation-free), and $M$ can be identified with the submonoid $\Theta(G)\cap \mathfrak{Monom}_n(\mathbb Q[q^{\pm 1}])$ of matrices with only non-negative powers.
	\end{prop}
	From now on we identify $G$ with it's image $\Theta(G)$.
	
	Explicitly, this result means that given $(a_1,\dots,a_n)\in\mathbb Z^n$, there exists a unique element in $G$ such that for each row (resp. column) of $G$ the non-zero element is $q^{a_i}$.

Now we can state a new characterization of structure groups (braces) through the monomial matrices :
\begin{thm}
\label{condCS}
Let $G$ be a subgroup of $\M_n$, denote $G^+=G\cap \M_n^+$ (the submonoid of positive elements). Suppose that the set of atoms $S=\{s_1,\dots,s_n\}$ of $G^+$ has cardinal $n$, generates $G$ and there exists a positive integer $k$ such that $D_{s_i}=D_i^k$. Let the operation $*$ be defined on $S$ by $s_i*s_j={\psi(s_i)}(s_j)$, then the following assertions are equivalent: 
\begin{enumerate}[label=(\roman*)]
\item $G$ is permutation-free
\item $s(s*t)=t(t*s)$ for all $s,t$ in S
\item $G$ is the structure group of $S$
\end{enumerate}
\end{thm}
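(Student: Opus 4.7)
The plan is to establish the cyclic chain $(\text{iii}) \Rightarrow (\text{i}) \Rightarrow (\text{ii}) \Rightarrow (\text{iii})$. The implication $(\text{iii}) \Rightarrow (\text{i})$ is an immediate application of Proposition \ref{istruct}, since (iii) already ensures that $(S,*)$ is a cycle set.

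For $(\text{i}) \Rightarrow (\text{ii})$, I would compute the diagonal parts of both $s_i(s_i*s_j)$ and $s_j(s_j*s_i)$ in the monomial representation. Using Corollary \ref{monprod}, Lemma \ref{conj} and the hypothesis $D_{s_l} = D_l^k$, a direct calculation shows ${}^{\psi(s_i)}D_{s_i*s_j} = D_{s_j}$: the nonzero entry of $D_{s_i*s_j}$ sits in row $\psi(s_i)(j)$, and conjugation by $P_{\psi(s_i)}$ transports it to row $j$. Thus both sides share the diagonal part $D_{s_i}D_{s_j}$, and their quotient is a permutation matrix in $G$, which must be the identity by (i).

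For $(\text{ii}) \Rightarrow (\text{iii})$, I would first extract the cycle set equation for $*$ by comparing the permutation parts of $s(s*t) = t(t*s)$: Corollary \ref{monprod} gives $\psi(s*t) \circ \psi(s) = \psi(t*s) \circ \psi(t)$, which evaluated at any $u \in S$ is exactly $(s*t)*(s*u) = (t*s)*(t*u)$. Hence $(S,*)$ is a cycle set, and the presentation of $G_S$ furnishes a well-defined surjective homomorphism $\varphi : G_S \twoheadrightarrow G$ sending each generator to itself. To obtain injectivity, I would first restrict to the monoid: by Proposition \ref{pipres} every $f \in M_S$ admits a $\Pi$-expression, and applying Proposition \ref{pimat} inside $G$ shows that the coefficient-power tuple of $\varphi(f)$ is exactly $k$ times the tuple of $f$ in $G_S$. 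Since $k \geq 1$, this tuple still uniquely determines the multiset of entries via Corollary \ref{pipres2}, so $\varphi|_{M_S}$ is injective. Combining this with Proposition \ref{frac}, which writes any $g \in G_S$ as $fh^{-1}$ with $f,h \in M_S$, extends injectivity from $M_S$ to $G_S$.

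The main obstacle is this final injectivity step: one has to check that the $\Pi$-calculus developed in Section \ref{cyclesets} transfers verbatim to $G$ despite the rescaling $D_{s_i} = D_i^k$. Fortunately, the proofs of Propositions \ref{pipres}, \ref{pimat} and Corollary \ref{pipres2} use only the cycle set axioms and the algebra of monomial matrices, so they carry over unchanged, and the extra factor of $k$ merely rescales the coefficient-power tuples without losing information.
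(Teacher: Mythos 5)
Your proposal is correct and follows essentially the same route as the paper: the implications (iii)$\Rightarrow$(i) and (i)$\Rightarrow$(ii) are argued identically (Proposition \ref{istruct}, respectively equality of diagonal parts forcing a permutation matrix in $G$ to be trivial), and (ii)$\Rightarrow$(iii) extracts the cycle-set equation from the permutation parts in the same way. The only difference is one of detail: where the paper disposes of the final identification by asserting that $M$ and $M_S$ (resp.\ $G$ and $G_S$) have the same generators inside $\M_n$ and reduces the scaling $D_{s_i}=D_i^k$ to $k=1$ via $q\mapsto q^k$, you make the surjection $G_S\twoheadrightarrow G$ explicit and verify its injectivity through the $\Pi$-calculus and Proposition \ref{frac}, handling the factor $k$ as a harmless rescaling of coefficient-power tuples — a welcome elaboration of the paper's terse step rather than a different method.
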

\begin{proof}
First notice that $q\mapsto q^p$ provides an injective morphism $\M_n\to\M_n$, so we can assume $p=1$.

(i) $\Rightarrow$ (ii): For $1\leq i,j\leq n$, we have : $$s_i s_{\psi(i)(j)}=D_iP_{s_i}D_{s_i*s_j}P_{s_i*s_j}=D_iD_jP_{s_i}P_{s_i*s_j}$$
By symmetry, $s_j(s_j*s_i)$ will have the same diagonal part. Then $$\left(s_i(s_i*s_j)\right)^{-1}\left(s_j(s_j*s_i)\right)=P_{s_i(s_i*s_j)}^{-1}D_{s_i(s_i*s_j)}^{-1}D_{s_j(s_j*s_i)}P_{s_j(s_j*s_i)}=P_{s_i(s_i*s_j)}^{-1}P_{s_j(s_j*s_i)}\in G.$$ So by the assumption that $G$ is permutation-free we deduce $s_i(s_i*s_j)=s_j(s_j*s_i)$.

(ii) $\Rightarrow$ (iii): Recall that $P_{s_i(s_i*s_j)}=P_{s_i}P_{s_i*s_j}=P_{\psi(s_i*s_j)\circ\psi(s_i)}$, so we find $\psi(s_i*s_j)\circ\psi(s_i)=\psi(s_j*s_i)\circ\psi(s_j)$. For $t\in S$, this means that $\psi(s_i*s_j)\circ\psi(s_i)(t)=\psi(s_j*s_i)\circ\psi(s_j)(t)$, i.e. $(s_i*s_j)*(s_i*t)=(s_j*s_i)*(s_j*t)$, so precisely that $S$ is a cycle set. Then the generators of $M$ correspond to the generators of $M_S$ and both are submonoids of $\M_n$, so $M=M_S$. Similarly, as $S$ generates $G$ we have $G_S=G$.

(iii) $\Rightarrow$ (i) : Comes from the previous theorem.
\end{proof}
\subsection{Braces}
Braces were first introduced by Rump in \cite{rump07} through linear cycle sets. An equivalent definition was then introduced by Ced{\'o}, Jespers and Okni{\'n}ski in \cite{leftbrace} and then in a large survey again by Ced{\'o} in \cite{brace}. We will use their definition of a (left) brace throughout this article.
\begin{defi}[\cite{rump07,brace}]
A brace is a triple $(B,+,\cdot)$ such that $(B,+)$ is an abelian group, $(B,\cdot)$ is a group and for all $a,b,c$ in $B$: $$a(b+c)+a=ab+ac.$$
$(B,+)$ will be called the additive group and $(B,\cdot)$ the multiplicative group of the brace $B$.
\end{defi}
We now fix $B$ a brace.
\begin{rmk} Note that, if 0 is the additive identity and 1 the multiplicative identity, then taking $a=1,b=c=0$ yields $1*(0+0)+1=1*0+1*0$, thus $1=0$.
\end{rmk}
\begin{ex}
If $(G,+)$ is an abelian group then $(G,+,+)$ is a brace, called the trivial brace. 

	Taking $(B,+)=\mathbb Z/2\mathbb Z\times \mathbb Z/2\mathbb Z$ with $(a,b)\cdot(c,d)=\begin{cases}(a+c,b+d),& a+b=0 \text{ mod } 2\\(a+d,b+c),& a+b=1\text{ mod } 2\end{cases}$ can be checked to be a left-brace, and obviously $(0,0)$ is the identity of $(B,\cdot)$.
\end{ex}
\begin{propdef}[\cite{brace}]
For any $a$ in $B$, the map $\lambda:(B,\cdot)\to \text{Aut}(B,+)$ defined by $\lambda_a(b)=ab-a$ for all $a,b$ in $B$, is a well-defined  morphism.

This also gives $ab=a+\lambda_a(b)$. This will be used everywhere to switch between products and sum of elements.
\end{propdef}
\begin{ex} From the previous example we have respectively $\lambda_g=\text{id}_G$ for all $g$ in $G$, and in $(B,+,\cdot)$ $\lambda((a,b))=\sigma^{a+b}$ where $\sigma$ permutes the two coordinate of $(B,+)$, and obviously $(0,0)$ is the identity of $(B,\cdot)$.
\end{ex}
\begin{lem}[\cite{brace}]\label{prodsum}\label{ab-1}\label{ab-1soc}
For any $a,b$ in $B$ we have :
\begin{enumerate}
\item $\lambda_a\lambda_b=\lambda_{a+\lambda_a(b)}$.
\item $ab^{-1}=-\lambda_{ab^{-1}}(b)+a$
\item If $\lambda_a=\lambda_b$ then $ab^{-1}=a-b$
\end{enumerate}
\end{lem}
\begin{proof}
This first one follows from $gh=g+\lambda_g(h)$.

For the second one, $-\lambda_{ab^{-1}}(b)+a=-ab^{-1}b-ab^{-1}+a=ab^{-1}$.

And then, $\lambda_{ab^{-1}}=\lambda_a\lambda_b^{-1}=\lambda_a\lambda_a^{-1}=\text{id}_B.$
\end{proof}
\begin{lem}\label{ybe}
For any $a,b$ in $B$, we have $a\lambda_a^{-1}(b)=b\lambda_b^{-1}(a)$.

Moreover, $\lambda^{-1}_{\lambda^{-1}_a(b)}\lambda_a^{-1}=\lambda^{-1}_{\lambda^{-1}_b(a)}\lambda_b^{-1}$.
\end{lem}
\begin{proof}
Firstly,
$$a\lambda_a^{-1}(b)=a(a^{-1}b-a^{-1})=b-1+a=b-0+a=b+a=a+b=b\lambda_b^{-1}(a).$$
Then from the fact that $\lambda\colon(B,\cdot)\to\text{Aut}(B,+)$ is a morphism we have that $\lambda_{ab}^{-1}=\lambda_b^{-1}\lambda_a^{-1}$ so $$\lambda^{-1}_{\lambda^{-1}_a(b)}\lambda_a^{-1}=\lambda^{-1}_{a\lambda^{-1}_a(b)}=\lambda^{-1}_{b\lambda^{-1}_b(a)}=\lambda^{-1}_{\lambda^{-1}_b(a)}\lambda_b^{-1}.$$
\end{proof}
The following is implicit in \cite{brace} : 
\begin{lem}[\cite{brace}]
Let $S$ be a subset of a brace $(B,+,\cdot$ such that $\lambda_s(S)\subseteq S$ for any $s$ in $S$. Then $(S,+)$ is a subgroup of $(B,+)$ if and only if it is a subgroup of $(B,\cdot)$.
\end{lem}
\begin{proof}
This follows from the identity $ab=a+\lambda_a(b)$ (or equivalently $a+b=a\lambda_a^{-1}(b)$).
\end{proof}
\begin{defi}[\cite{brace}] Let $(B,+,\cdot)$ be a brace.
\begin{itemize}
\item $S\subseteq B$ is a subbrace if it is a subgroup of both $(B,+)$ and $(B,\cdot)$.
\item $L\subseteq B$ is a left ideal if it is a subgroup of $(B,+)$ and $\lambda_a(I)\subseteq I$ for all $a$ in $B$.
\item $I\subseteq B$ is an ideal if it is a normal subgroup of $(B,\cdot)$ and $\lambda_a(I)\subseteq I$ for all $a$ in $B$.
\end{itemize}
\end{defi}
\begin{prop}[\cite{brace}] Let $(B,+,\cdot)$ be a brace and $I\subseteq B$.
\begin{itemize}
\item $I$ is an ideal $\Rightarrow$ $I$ is a left ideal $\Rightarrow$ $I$ is a subbrace.
\item If $I$ is an ideal then the multiplicative quotient $B/I$ has an induced brace structure $(B/I,+,\cdot)$.
\item $\text{Soc}(B)=\text{Ker}(\lambda)=\{a\in B\mid \forall b\in B, ab=a+b\}$ is an ideal called the Socle of B.
\end{itemize}
\end{prop}
\begin{rmk}
\end{rmk}
In \cite{etingof,rcc} it is shown that the structure group $G$ of a finite cycle set $S$ of size $n$ has an I-structure : $G$ embeds in $\mathbb Z^n\rtimes\mathfrak S_n$ such that projecting on the first coordinate is a bijection. Moreover, the structure monoid $M$ embeds in $G$ and corresponds to first coordinates in $\mathbb N^n$. This is used in \cite{brace} to define the additive structure on $G$ on the first coordinate, we give an equivalent statement in terms of matrices :
\begin{thm}[\cite{brace}]\label{brace}
The structure group $G$ of a finite cycle set $S$ has a brace structure with the usual multiplication, addition given by $g+h$ as the unique element with $D_{g+h}=D_gD_h$, and such that $(G,+)\simeq \mathbb Z^S$. 

 In particular, if $g$ and $h$ are in $M$, so is $g+h$.
 
 Moreover, $\text{Soc}(G)=\{g\in G\mid P_g=\text{Id}\}$.
\end{thm}
\begin{rmk}
Note that for any $g$ in $G$, $\text{Id}_n=D_{g-g}=D_{g}D_{-g}$ thus $$D_{-g}=D_g^{-1},$$ while $0=g^{-1}g=g^{-1}+\lambda_g^{-1}(g)$ thus $$D_{g^{-1}}=D_{\psi(g)(g)}^{-1}=\pre{g}{D_g^{-1}}.$$
\end{rmk}
\begin{cor}
For any $s,t$ in $S$ we have $\lambda_s(t)=\psi(s)^{-1}(t)$, or equivalently $\lambda_s^{-1}(t)=s*t$.
\end{cor}
\begin{proof}
$\lambda_s(t)=st-s=D_sP_sD_tP_t-D_sP_s=D_sD_{\psi(s)^{-1}(t)}P_sP_t-D_sP_s$ which, from the previous theorem, is the unique element with diagonal part $(D_sD_{\psi(s)^{-1}(t)})D_s^{-1}=D_{\psi(s)^{-1}(t)}$ (as $D_{-s}=D_s^{-1}$, thus $\lambda_s(t)=\psi(s)^{-1}(t)$.
\end{proof}
From the I-structure mentioned above we can write any element of $G$ as $g=\sum\limits_{s\in S} g_s s$ where $g_s\in\mathbb Z$. 

Then for any $h$ in $G$, we have $\lambda_h(g)=\sum_S g_s \lambda_h(s)$ with $\lambda_h(s)$ in $S$.
\subsection{Garsideness}
In \cite{rcc}, Dehornoy used Rump's result on the non-degeneracy of finite cycle sets to obtain the Garside structure of the structure group (first proved in \cite{chouraqui} and also appearing in \cite{rightl}). In \cite{dehTalk} he asked whether the opposite could be done. The objective of the next subsections is to provide a positive answer this question, that is, to obtain the Garside structure without Rump's result, and then recover Rump's result (without even using the Garside structure). This subsection will mostly use the monomial matrix approach, but in the next one we will give a brace equivalent of some statements (but requiring the use a consequence of Rump's theorem).

Recall that we fix $(S,*)$ a finite cycle set of size $n$ with structure brace $G$ and monoid of positive elements $M$. Moreover, as the defining relations of the presentation of $G$ are homogeneous (quadratic), we have a well-defined length function $\ell : G\to \mathbb Z$, which restricts to $M\to \mathbb N$.

\begin{defi} Let $g_1,g_2$ be elements of $M$. We say that $g_1$ left-divides (resp. right-divides) $g_2$, that we note $g_1\preceq g_2$ (resp. $g_1\preceq_r g_2$) if there exists some $h\in M$ such that $g_2=g_1h$ (resp. $g_2=hg_1$) and $\ell(g_2)=\ell(g_1)+\ell(h)$.

An element $g\in M$ is called balanced if the set of its left-divisors $\text{Div}(g)$ and the set of its right-divisors $\text{Div}_r(g)$ coincide.
\end{defi}
Note that, as $g_1=P_{g_1}\pre{g_1}{D_{g_1}}$, its matricial transpose is given by $g_1^t=P_{g_1}^tD_{g_1}=P_{g_1}^{-1}D_{g_1}=\pre{g_1}D_{g_1}P_{g_1}^{-1}$, thus the coefficient on the $i$-th column of $g_1$ is the coefficient on the $i$-th row of $g_1^t$.
\begin{prop}\label{div} Let $g,h$ be in $M$.

Then $g$ left-divides $h$ if and only if for each row of $g$ the power of $q$ on this row is smaller than the corresponding one of $h$.

Similarly, $g$ right-divides $h$ if and only if for each column of $g$ the power of $q$ on this column is smaller than the corresponding one of $h$.
\end{prop}
We will give an alternative brace proof and an interpretation of this statement in subsection, but it will use Rump's theorem. \ref{nd}.
\begin{proof} 
Write $g_i=D_{g_i}P_{g_i}=P_{g_i}\pre{g_i}{D_{g_i}}$. For left-divisibility, consider in $G$ the element $h=g_1^{-1}g_2=P_{g_1}^{-1}D_{g_1}^{-1}D_{g_2}P_{g_2}$. Recall that $h\in M$ iff $D_{g_1}^{-1}D_{g_2}$ contains only non-negative powers of $q$ (to lie in $\mathbb N^n\subseteq \mathbb Z^n$ the additive group of the brace), precisely meaning that the power on each row of $g_1$ is less than the one of $g_2$.

Similarly, for right divisibility, let $h'=g_2g_1^{-1}=P_{g_2}\pre{g_2}{D_{g_2}}\left(\pre{g_1}{D_{g_1}}\right)^{-1}P_{g_1}^{-1}$, which is in $M$ iff $\pre{g_2}{D_{g_2}}\left(\pre{g_1}{D_{g_1}}\right)^{-1}$ contains only non-negative powers of $q$, which is the same criterion on the columns.
\end{proof}
\begin{ex}
Taking $S=\{s_1,s_2\}$ with $\psi(s_1)=\psi(s_2)=(12)$, we can see that: 

$\begin{pmatrix}0&q^3\\1&0\end{pmatrix}$ left-divides $\begin{pmatrix}q^4&0\\0&1\end{pmatrix}$ (as $3\leq 4$ on the first line and $0\leq 0$ on the second since $1=q^0$), but doesn't right divide it (as $3>0$ on the second column)
\end{ex}
\begin{cor}\label{gcdlcm}
Let $g,h$ be in $M$. The left-gcd (resp. left-lcm) of $g$ and $h$, denoted $g_1\wedge g_2$ (resp. $g_1\vee g_2$) is given by the unique element such that the coefficient-power on each row is the maximum (resp. minimum) of those of $g_1$ and $g_2$.

For right-gcd (resp. right-lcm) it is the same but for each column.
\end{cor}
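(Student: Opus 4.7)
The plan is to promote the cp-dictionary of Proposition~\ref{div} to a lattice isomorphism between the left-divisibility order on $M$ and the componentwise order on $\mathbb{N}^n$, and then read off the meet and join via Corollary~\ref{tupPi} (existence) together with Corollary~\ref{pipres2} (uniqueness).

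Concretely, for the left-gcd I would introduce the tuple $a = (a_1,\dots,a_n)$ defined by $a_i = \max(\text{cp}(g_1)_i, \text{cp}(g_2)_i)$. Since $g_1, g_2 \in M$, this tuple lies in $\mathbb{N}^n$, so by Corollary~\ref{tupPi} there exists $d \in M$ with $\text{cp}(d) = a$, and by Corollary~\ref{pipres2} this $d$ is unique. Checking that $d = g_1 \wedge g_2$ then reduces, via the cp-dictionary of Proposition~\ref{div}, to a purely componentwise check in $\mathbb{N}^n$, which is immediate from the defining property of $a$. The left-lcm is treated identically with $\min$ in place of $\max$.

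For the right-gcd and right-lcm, the second half of Proposition~\ref{div} characterises right-divisibility by $\text{cp}(g_1^t) \leq \text{cp}(g_2^t)$, i.e.\ the comparison is column-wise on the matrix. The same argument, applied column-by-column using $\text{cp}$ on the transpose instead of on the matrix itself, produces the right-gcd and right-lcm with the stated cp.

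The main obstacle is essentially notational: one must keep the direction of the cp extremum aligned with the direction of the divisibility relation in Proposition~\ref{div}, and confirm that the candidate tuples still lie in $\mathbb{N}^n$ (automatic here because $g_1, g_2 \in M$). Once this bookkeeping is settled the statement is a formal consequence of Corollary~\ref{tupPi}, Corollary~\ref{pipres2}, and the lattice structure on $\mathbb{N}^n$.
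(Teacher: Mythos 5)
Your overall strategy is the intended one: the paper gives no written proof of Corollary \ref{gcdlcm}, treating it as immediate from Proposition \ref{div} together with the bijection $\text{cp}\colon M\to\mathbb N^n$ supplied by Corollaries \ref{tupPi} (existence) and \ref{pipres2} (uniqueness), i.e.\ from the fact that $(M,\preceq)$ is order-isomorphic to $(\mathbb N^n,\leq)$ with its componentwise order. So the skeleton of your argument matches the paper's.

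There is, however, a concrete error in the instantiation: the roles of $\max$ and $\min$ are reversed, and this makes your key verification step false rather than ``immediate''. By Proposition \ref{div}, $h$ left-divides $g$ iff $\text{cp}(h)\leq\text{cp}(g)$, so a common left-divisor of $g_1,g_2$ has cp bounded above by the componentwise \emph{minimum} of $\text{cp}(g_1)$ and $\text{cp}(g_2)$, and the left-gcd is the unique element realizing that minimum; dually, the left-lcm realizes the componentwise maximum. With your choice $a_i=\max(\text{cp}(g_1)_i,\text{cp}(g_2)_i)$, the element $d$ with $\text{cp}(d)=a$ is left-divided by both $g_1$ and $g_2$: it is the left-lcm, not the left-gcd, and the componentwise check that $d=g_1\wedge g_2$ fails whenever $\text{cp}(g_1)\neq\text{cp}(g_2)$. (The statement of the corollary in the paper contains the same transposition of $\max$ and $\min$, but the worked example immediately after it computes the gcd of the cp-tuples $(1,0,1,0)$ and $(1,1,0,0)$ as $(1,0,0,0)$, i.e.\ the minimum; carrying out the check you describe would have exposed the inconsistency.) A secondary gap: for the right-gcd and right-lcm you need the relevant \emph{column} tuples to be realized by elements of $M$, but Corollary \ref{tupPi} only realizes row tuples; that the transposed monoid also carries an I-structure is established only later in the paper (via the bijectivity of the diagonal map and the corollary that $G^t$ is again a structure group), so ``the same argument on the transpose'' needs that input to be complete.
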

\begin{ex}
Consider $S=\{s_1,s_2,s_3,s_4\}$ with \begin{align*}\psi(s_1)=(1234)&\qquad\psi(s_3)=(24)\\\psi(s_2)=(1432)&\qquad\psi(s_4)=(13)\end{align*}
We have $$\begin{pmatrix}0&q&0&0\\1&0&0&0\\0&0&0&q\\0&0&1&0\end{pmatrix}\wedge\begin{pmatrix}0&0&0&q\\0&0&q&0\\0&1&0&0\\1&0&0&0\end{pmatrix}=\begin{pmatrix}0&q&0&0\\0&0&1&0\\0&0&0&1\\1&0&0&0\end{pmatrix}$$

Which is given by $\text{gcd}\left(s_1+s_3,s_1+s_2\right)=s_1$.

Similarly: 
$$\begin{pmatrix}0&q&0&0\\1&0&0&0\\0&0&0&q\\0&0&1&0\end{pmatrix}\vee\begin{pmatrix}0&1&0&0\\q&0&0&0\\0&0&0&1\\0&0&q&0\end{pmatrix}=\begin{pmatrix}q&0&0&0\\0&q&0&0\\0&0&q&0\\0&0&0&q\end{pmatrix}$$
Which is given by $\text{lcm}\left(s_1+s_3,s_2+s_4\right)=s_1+s_2+s_3+s_4$.

For the right gcd and lcm, the explicit versions will be given in the next subsection with Rump's result.
\end{ex}
\begin{cor} An element such that the non-zero terms of its $i$-th row and $i$-th column are equal for all $1\leq i\leq n$ is balanced.
\end{cor}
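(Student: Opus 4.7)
The plan is to restate the hypothesis in the language of coefficient-power tuples. The non-zero entry on row $i$ of $g$ equals $q^{\text{cp}(g)_i}$, while the one on column $i$ equals $q^{\text{cp}(g)_{\psi(g)^{-1}(i)}}$, so their equality for every $i$ translates to $\text{cp}(g)$ being invariant under $\psi(g)$, equivalently $\text{cp}(g^t) = \text{cp}(g)$. Writing $c := \text{cp}(g) = \text{cp}(g^t)$, Proposition~\ref{div} identifies $\text{Div}(g) = \{h \in M \mid \text{cp}(h) \leq c\}$ and $\text{Div}_r(g) = \{h \in M \mid \text{cp}(h^t) \leq c\}$, where both descriptions involve the same bound $c$ and only differ by whether one compares row-wise or column-wise.

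Using Lemma~\ref{conj} on the decomposition $h^t = \pre{\psi(h)^{-1}}{D_h}\, P_{\psi(h)^{-1}}$, one has $\text{cp}(h^t)_i = \text{cp}(h)_{\psi(h)^{-1}(i)}$, so the condition $h \in \text{Div}_r(g)$ rewrites (setting $j = \psi(h)^{-1}(i)$) as $\text{cp}(h)_j \leq c_{\psi(h)(j)}$ for all $j$. Establishing $\text{Div}(g) = \text{Div}_r(g)$ then amounts to the equivalence, for every $h \in M$, of $\text{cp}(h)_j \leq c_j$ and $\text{cp}(h)_j \leq c_{\psi(h)(j)}$; this is immediate once one knows that $\psi(h)$ stabilizes $c$, i.e., $c_{\psi(h)(j)} = c_j$ for all $j$, since the two inequalities then coincide term by term.

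I expect the main obstacle to be exactly this stabilization step. A natural route is induction on $\lambda(h)$: by Corollary~\ref{div2}, every $h \preceq g$ arises as a $\Pi$-expression built from a subtuple of a fixed $\Pi$-expression $(t_1,\dots,t_k)$ of $g$, and Corollary~\ref{monprod} gives $\psi(hh') = \psi(h')\circ\psi(h)$, so the claim reduces to showing that for each atom $s$ occurring in $(t_1,\dots,t_k)$ the permutation $\psi(s)$ preserves $c$. The base case would be handled via Proposition~\ref{pimat}, which computes $D_g = D_{t_1}\cdots D_{t_k}$ and encodes $c$ through the multiplicities of each $s_i$ appearing in the sequence; combined with the $\psi(g)$-symmetry of $c$, this should force compatibility between the $\psi(s)$-orbits and the level sets of $c$, completing the argument.
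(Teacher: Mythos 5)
Your reduction of the hypothesis is correct: it says exactly that $c:=\mathrm{cp}(g)=\mathrm{cp}(g^t)$ is $\psi(g)$-invariant, and Proposition~\ref{div} does turn balancedness into the equivalence, for every $h\in M$, of $\mathrm{cp}(h)_j\le c_j$ and $\mathrm{cp}(h)_j\le c_{\psi(h)(j)}$. But the step you defer --- that $\psi(h)$ stabilizes $c$ for every divisor $h$ of $g$ --- is not merely the main obstacle: it is false, and no induction on $\lambda(h)$ will rescue it. Take $S=\{s_1,s_2\}$ with $\psi(s_1)=\psi(s_2)=(12)$ (the paper's example $\langle s,t\mid s^2=t^2\rangle$) and $g=s_1^{[2]}=s_1s_2=\mathrm{diag}(q^2,1)$. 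Being diagonal, $g$ satisfies the literal hypothesis of the corollary (for each $i$, the non-zero entry of row $i$ and of column $i$ is the very same entry), and $c=(2,0)$ is invariant under $\psi(g)=\mathrm{id}$; yet the left-divisor $s_1$ has $\psi(s_1)=(12)$, which does not stabilize $(2,0)$, and indeed $\mathrm{Div}(g)=\{1,s_1,s_1s_2\}$ while $\mathrm{Div}_r(g)=\{1,s_2,s_1s_2\}$. So the statement, read the way you (quite reasonably) read it, is itself false, and your proposed closing step cannot be carried out.

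The paper gives no proof of this corollary, and the only place it is invoked is for $\Delta$ and its powers, whose cp-tuple is constant; the intended hypothesis is surely that all these non-zero terms are equal to \emph{one another}, i.e.\ $\mathrm{cp}(g)=(k,\dots,k)$. Under that reading the proof is one line and needs none of the machinery you set up: $\mathrm{cp}(h^t)=\pre{\psi(h)^{-1}}{\mathrm{cp}(h)}$ is a permutation of $\mathrm{cp}(h)$, and the condition of being bounded above by a constant tuple is invariant under permuting coordinates, so the two divisibility criteria of Proposition~\ref{div} coincide and $\mathrm{Div}(g)=\mathrm{Div}_r(g)$. You should either prove that version, or record a counterexample such as the one above against the literal statement.
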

\begin{defi} An element of $M$ is called a Garside element if it is balanced, $\text{Div}(g)$ is finite and generates $M$.
\end{defi}
\begin{prop}[\cite{rcc}] 
\label{gars-ele}
The element $\Delta=\sum_S s$  is a Garside element of $M$.
\end{prop}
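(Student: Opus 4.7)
The plan is to verify, in turn, the three requirements in the definition of a Garside element (finite divisor set, divisors generating $M$, balancedness) after first settling existence and uniqueness of $\Delta$.

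Existence of an element $\Delta\in M$ with $D_\Delta=\text{diag}(q,\dots,q)$ is the case of Corollary \ref{tupPi} applied to the tuple $(1,\dots,1)\in\mathbb N^n$; uniqueness is immediate from the corollary stating that an element of $G$ is determined by its diagonal part. In particular $\text{cp}(\Delta)=(1,\dots,1)$. The finiteness and generation conditions then drop out of Proposition \ref{div}: an element $g\in M$ left-divides $\Delta$ if and only if $\text{cp}(g)\leq(1,\dots,1)$, i.e.\ every coordinate of $\text{cp}(g)$ lies in $\{0,1\}$, giving $|\text{Div}(\Delta)|\leq 2^n$. Each generator $s_i\in S$ satisfies $\text{cp}(s_i)=e_i\leq(1,\dots,1)$, hence $s_i\in\text{Div}(\Delta)$; since $S$ generates $M$, so does $\text{Div}(\Delta)$.

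For balancedness, the main observation is that $D_\Delta=qI$ is invariant under conjugation by any permutation. Using the decomposition $\Delta^t=P_\Delta^{-1}D_\Delta=\pre{\psi(\Delta)^{-1}}{D_\Delta}\,P_\Delta^{-1}$, one reads off $D_{\Delta^t}=qI$, so $\text{cp}(\Delta^t)=(1,\dots,1)$ as well; equivalently, the non-zero entry of the $i$-th column of $\Delta$ is $q$ for every $i$, and the corollary preceding the Garside definition (equal non-zero row and column entries implies balanced) applies directly. Alternatively, one can apply the right-divisibility criterion of Proposition \ref{div} and note that $\text{cp}(g^t)$ is always a permutation of $\text{cp}(g)$, so the condition $\text{cp}(g^t)\leq(1,\dots,1)$ is symmetric to the left one; hence $\text{Div}_r(\Delta)=\text{Div}(\Delta)$.

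The only mild subtlety is the careful treatment of the right-divisibility side, but once the invariance of the scalar diagonal $qI$ under conjugation is noted, the right-hand argument dualises from the left-hand one with no new input, and the three Garside conditions are verified.
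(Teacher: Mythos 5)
Your proof is correct and follows essentially the same route as the paper: existence/uniqueness via the I-structure, finiteness and generation of $\text{Div}(\Delta)$ from the coefficient-power divisibility criterion, and balancedness from the fact that all non-zero entries of $\Delta$ equal $q$ (so rows and columns carry the same entries). The paper's version is just a terser statement of the same argument, noting in addition that $|\text{Div}(\Delta)|$ is exactly $2^n$ since every tuple in $\{0,1\}^n$ is realized.
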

\begin{proof} Because all the non-zero coefficients of $\Delta$ are equal, it is balanced.

Its set of divisors is the set of elements with non-zero coefficients $1$ or $q$ and so is finite and has cardinal $2^n$, and it contains all the generators $s$ so also generates M. 
\end{proof}
\begin{rmk} The powers of $\Delta$, which are given by $\Delta^k=\sum_S ks$, are also Garside elements by the same reasoning.

More generally, Garside elements are precisely the balanced elements $g$ such that $g_s\geq 1$.
\end{rmk}
\begin{defi}[\cite{garside}] A monoid is said to be a Garside monoid if: 
\begin{enumerate}[label=(\roman*)]
\item It is cancellative, i.e. if for every element $g_1,g_2,h,k$, $hg_1k=hg_2k\Rightarrow g_1=g_2$.
\item There exists a map $\Lambda$ to the integers such that $\ell(g_1g_2)\geq\ell(g_1)+\ell(g_2)$ and $\ell(g)=0\Rightarrow g=1$.
\item Any two elements have a gcd and lcm relative to $\preceq$ (resp. $\preceq_r$).
\item It possesses a Garside element $\Delta$.
\end{enumerate}
\end{defi}
\begin{prop}[\cite{rcc}] M is a Garside monoid.
\end{prop}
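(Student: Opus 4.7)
The plan is to check the four axioms of the definition of a Garside monoid one by one, each of which will reduce to a result already proved in the previous subsections.

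For axiom (i), cancellativity of $M$ follows from the fact that $M$ embeds in the group $G$: by Corollary \ref{monInj} the natural map $M\to G$ is injective, and any submonoid of a group is cancellative. For axiom (ii), the length map $\lambda$ introduced in the definition before Proposition \ref{cpLamb} satisfies $\lambda(gh)=\lambda(g)+\lambda(h)$ for elements of $M$ by that proposition (on positive elements the tuple $\mathrm{cp}(g)$ lies in $\mathbb{N}^n$, so the absolute values sum linearly). If $\lambda(g)=0$ for $g\in M$ then $\mathrm{cp}(g)=(0,\dots,0)$, hence $D_g=\mathrm{Id}$, so $\Theta(g)=P_g$ is a permutation matrix in $G$; by Proposition \ref{istruct} the group $G$ is permutation-free, so $g=1$.

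For axiom (iii), existence of left- and right-gcds and lcms is exactly the content of Corollary \ref{gcdlcm}: given $g_1,g_2\in M$ with cp-tuples $(a_i)$ and $(b_i)$, the left-gcd and left-lcm are the unique positive elements with cp-tuples given row-wise by the componentwise min and max (uniqueness and existence both follow from Proposition \ref{cpPi} and Corollary \ref{pipres2}), and the right versions are obtained identically on the columns of $g_1^t,g_2^t$. Finally, axiom (iv) is Proposition \ref{gars-ele}: the element $\Delta$ with $D_\Delta=\mathrm{diag}(q,\dots,q)$ is a Garside element.

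There is essentially no obstacle left: all the technical work has been packaged into the structural results about the monomial representation (unique determination by the diagonal part, additivity of $\lambda$, permutation-freeness, explicit computation of gcds/lcms, and explicit Garside element). The proof is therefore a short verification, with one line per axiom, invoking Corollaries \ref{monInj} and \ref{gcdlcm} and Propositions \ref{cpLamb}, \ref{istruct} and \ref{gars-ele}.
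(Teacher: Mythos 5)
Your proof is correct and follows essentially the same route as the paper: cancellativity from the embedding $M\hookrightarrow G$ (Corollary \ref{monInj}), additivity of $\lambda$ for (ii), Corollary \ref{gcdlcm} for (iii), and Proposition \ref{gars-ele} for (iv). The only difference is that you spell out the $\lambda(g)=0\Rightarrow g=1$ check via permutation-freeness, which the paper leaves implicit.
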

\begin{proof} The length of words (as the relations of $G$ respects length) satisfies (ii) as an equality.

For (iii) we have Corollary \ref{gcdlcm} and for (iv) Proposition \ref{gars-ele}.

We are left to prove (i), which is a direct consequence of the fact that $M\hookrightarrow G$. We can see also this from restricting the representation to $M$ and as the elements are monomial matrices, we deduce the cancellative property).
\end{proof}
\subsection{Dehornoy's class} 
The goal here is to follow the construction of the germ from \cite{rcc} mixing a brace and monomial approach.

Fix a finite cycle set $(S,*)$ of size $n$ with structure monoid (resp. group) $M$ (resp. $G$). Recall that the socle of $G$ corresponds to the elements with trivial permutation, in particular $(\text{Soc}(G),+)=(\text{Soc}(G),\cdot)$.
\begin{prop} \label{class} There exists a positive integer $d$ such that for all $s$ in $S$, $ds$ is diagonal, i.e. $ds\in \text{Soc}(G)$.
\end{prop}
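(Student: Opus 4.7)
The plan is to first produce, for each $i$, a positive integer $d_i$ with $s_i^{[d_i]}$ diagonal, then set $d := \text{lcm}(d_1,\ldots,d_n)$ and combine the results.

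For the first step, fix $i$ and consider the sequence $\tau_k := \psi(s_i^{[k]})$, which takes values in the finite group $\Sym_n$. By pigeonhole there exist $k_1 < k_2$ with $\tau_{k_1} = \tau_{k_2}$. The key trick is to examine the element $g := s_i^{[k_2]}(s_i^{[k_1]})^{-1} \in G$. Combining Proposition~\ref{cpLamb} with the inversion formulas $\psi(h^{-1}) = \psi(h)^{-1}$ and $\text{cp}(h^{-1}) = -\pre{\psi(h)^{-1}}{\text{cp}(h)}$ (both immediate from the rewriting $h^{-1} = P_h^{-1} D_h^{-1} = \pre{\psi(h)^{-1}}{D_h^{-1}} P_h^{-1}$), a short computation using the equality $\tau_{k_1}=\tau_{k_2}$ yields $\psi(g) = \tau_{k_1}^{-1}\tau_{k_2} = \text{id}$ and $\text{cp}(g)$ equal to the tuple with $k_2-k_1$ in the $i$-th entry and zeros elsewhere---which is exactly the cp-tuple of $s_i^{[k_2-k_1]}$. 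By the Corollary following Proposition~\ref{istruct} (an element of $G$ is determined by its $D$-part), we conclude $g = s_i^{[k_2-k_1]}$, and in particular $s_i^{[d_i]}$ is diagonal for $d_i := k_2-k_1$.

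For the second step, observe that each $s_i^{[d_i]}$ equals the diagonal matrix $D_{s_i}^{d_i}$, so the power $(s_i^{[d_i]})^{d/d_i} = D_{s_i}^d$ is still diagonal, lies in $M$, and has cp-tuple $d$ in position $i$ and $0$ elsewhere. By the same uniqueness this element coincides with $s_i^{[d]}$, so $s_i^{[d]}$ is diagonal for every $i$, completing the proof.

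The main obstacle is spotting the cancellation trick in the first step: the bare repetition $\tau_{k_1}=\tau_{k_2}$ carries no information about either permutation being trivial, but passing to the quotient $s_i^{[k_2]}(s_i^{[k_1]})^{-1}$ in $G$ converts the equality into $\psi(g) = \text{id}$, while the uniqueness of $G$-elements by cp (i.e.\ permutation-freeness) transports this back to a positive element of the prescribed form $s_i^{[d_i]}$. Once this is seen, the rest is straightforward arithmetic using Proposition~\ref{cpLamb}.
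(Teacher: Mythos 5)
Your proof is correct and follows essentially the same route as the paper: pigeonhole on the permutation parts $\psi(s_i^{[k]})$, cancellation in the quotient $s_i^{[k_2]}(s_i^{[k_1]})^{-1}$ to produce a diagonal positive element identified with $s_i^{[k_2-k_1]}$ via the uniqueness of elements by their diagonal part, and then the lcm to get a single $d$. The only difference is cosmetic: you track $\mathrm{cp}$ and $\psi$ through the product formulas, whereas the paper observes directly that the permutation matrices cancel to give $D_s^{k_1-k_2}$.
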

The smallest positive integer satisfying this condition is called the Dehornoy's class of $S$, and all the others will be multiples of this class. Our results are stated for the class, but most would work for any multiples. In \cite{rcc} the elements $ks$ are denoted $s^{[k]}$.
\begin{proof}
First fix $s\in S$. The map sending $ks$ to $\psi(ks)$ is a map from an infinite (countable) set to a finite one ($\mathbb N\to\mathfrak S_n$), therefore it is not injective. So there exists $k_1,k_2\in\mathbb N$ such that $\lambda_{k_1s}=\lambda_{k_2s}$. We can assume $k_1>k_2$ without loss of generality. Then from Lemma \ref{ab-1soc} $(k_1s)\cdot(k_2s)^{-1}=k_1s-k_2s=(k_1-k_2)s\in\text{Soc}(G)$

Doing this for all $s\in S$, we get the existence of $d_s\in\mathbb N$ such that $d_ss$ is diagonal. Notice that as Soc$(G)$ is an ideal, we must have for all $k\in\mathbb N$ $k(d_ss)\in\text{Soc}(G)$. Taking $d=\text{lcm}(d_s)_{s\in S}$ we have for all $s$ the existence of $d_s'>0$ such that $d=d_sd_s'$, we find that for all $s$, $ds=d_s'(d_ss)\in\text{Soc}(G)$.
\end{proof}
\begin{rmk}
In \cite{rcc}, the author obtained a bound on the class of a cycle set as $d\leq (n^2)!$. Here, we obtain a first better bound $d\leq (n!)^n$ given by the previous proof (as $d=\text{lcm}(d_1,\dots,d_n)$ with $d_i\leq n!$). This bound will be improved in Propositions \ref{ddivG} and \ref{conj-proof}.
\end{rmk}
\begin{prop} Let $d$ be the class of $S$ and denote by $dG$ the subgroup of $G$ generated by all the $ds$ (additively or multiplicatively, as they are the same). Then $dG$ is an ideal of $G$.
\end{prop}
\begin{proof} This follows directly from the fact $dG$ is a subgroup of $\text{Soc}(G)$ : as $(\text{Soc}(G),\cdot)=(\text{Soc}(G),+)$ which is abelian, $dG$ is normal, and by definition $\lambda_{ds}=\text{id}$ so $\lambda_{h}=\text{id}$ for any $h\in dG$.
\end{proof}
Thus we obtain a quotient brace $\overline G$ by $\overline G=G/dG$.
 \begin{prop}[\cite{rcc}]
 A presentation of $\overline G$ can be obtained by adding to the presentation of $G$ the relations $ds=1$. Matricially, quotienting is the same as specializing at $q=\exp(\frac{2i\pi}{d})$, which we will denote $\text{ev}_q$.
 
 Moreover, the quotient brace $\overline G$ has additive group $(\mathbb Z/d\mathbb Z)^S$
 \end{prop}
\begin{proof}
The first part comes from the fact that $dG$ is generated by the $ds$ which are in the socle, so they have trivial permutation and quotienting by them just amounts to setting $q^d=1$.

The second part then directly follows from the fact that $(G,+)\simeq \mathbb Z^S$.
\end{proof}
\begin{rmk}
If $d=1$ then $dG=G$ so $\overline G$ is trivial. However, $d=1$ means that all the generators $s$ are diagonal, i.e. $s*t=t$ for all $s,t$ in $S$: this is just the special case of the trivial cycle set. But the case $d=1$ can be included as all our results hold for any multiples of the class (thus any positive integer for $d=1$).
\end{rmk}
From now on, assume $d\geq 2$.
\begin{ex}
Let $S=\{s_1,\dots,s_n\}$ with $\psi(s_i)=(12\dots n)=\sigma$ for all $i$. Then for any $s\in S$, $k\in\mathbb Z$: $ks_i=D_s^k P_{\sigma^k}$. Thus Dehornoy's class of $S$ is equal to $n$. Let $\zeta_n=\exp(\frac{2i\pi}{n})$, then $\overline G$ is generated by the $\overline{s_i}=\diag{1,\dots,\zeta_n,\dots,1}P_\sigma$.
\end{ex}
Denote by $\zeta_d=\exp(\frac{2i\pi}{d})$ a primitive $d$-th root of unity and $\mu_d=\{\zeta_d^i\mid 0\leq i<d\}$. Let $\M_n^d$ be the subgroup of $\mathfrak{Monom}_n(\mathbb C)$ with non-zero coefficients in $\{0\}\cup\mu_d$. Given $k\geq 1$, there is natural embedding $\iota_d^{dk}\colon \M_n^d\to\M_n^{dk}$ sending $\zeta_d$ to $\zeta_{dk}^k$ (as $\zeta_{dk}^k=\exp(\frac{2ik\pi}{dk})=\zeta_d$). From the previous proposition, we deduce the following result: 
\begin{cor}
The quotient group $\overline G$ is a subgroup of $\M_n^d$.
\end{cor}
Recall that if $S$ has Dehornoy's class $d$, then for any positive integer $k$ we have that $kds$ is in the Socle, thus we could also consider the germ $G/\langle kds\rangle_{s\in S}\rangle$. The embedding $\iota_d^{dk}(\overline G)$ can then be seen as embedding the germ $\overline G$ in this bigger quotient group.
\begin{defi}\cite{rcc} If $(M,\Delta)$ is a Garside monoid with atom set $S$ and $G$ is the group of fractions of $M$, a group $\overline G$ with a  surjective morphism $\pi\colon G\to\overline G$ is said to provide a Garside germ for $(G,M,\Delta)$ if there exists a map $\chi\colon\overline G\to M$ such that $\pi\circ\chi=\text{Id}_{\overline G}$, $\chi(\overline G)=\text{Div}(\Delta)$ and $M$ admits the presentation $$\langle \chi(\overline G)\mid \chi(fg)=\chi(f)\chi(g) \text{\:when\:} ||fg||_{\overline S}=||f||_{\overline S}+||g||_{\overline S}\rangle$$
where $||\cdot||_{\overline S}$ denote the length of an element over $\overline S=\pi(S)$.
\end{defi}
\begin{prop}[\cite{rcc}] The specialization $\text{ev}_q$ that imposes $q=\exp(\frac{2i\pi}{d})$ provides a Garside germ of $(G,M,\Delta^{d-1})$.
\end{prop}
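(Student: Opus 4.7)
The approach is to exhibit $\chi$ explicitly via coefficient-power tuples and then verify the three conditions of a Garside germ in sequence, isolating the presentation statement as the main obstacle.

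First, I would construct $\chi$ and check the two set-theoretic conditions. Define $\chi\colon \overline G\to M$ by sending $f\in\overline G$ with $\overline\Pi$-coordinates $(\overline{c_1},\dots,\overline{c_n})$ to the unique element of $M$ (by Proposition \ref{cpPi}) whose $\text{cp}$-tuple is the canonical lift $(c_1,\dots,c_n)\in\{0,\dots,d-1\}^n$. Since $\text{ev}_q$ reduces $q$-exponents modulo $d$, we have $\pi\circ\chi=\text{Id}_{\overline G}$ tautologically. For the image condition, $\text{cp}(\Delta^{d-1})=(d-1,\dots,d-1)$, so Proposition \ref{div} identifies $\text{Div}(\Delta^{d-1})$ with the elements of $M$ whose $\text{cp}$-tuple lies in $\{0,\dots,d-1\}^n$, which is exactly $\chi(\overline G)$.

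Next, I would verify that the germ relations hold in $M$. I would first establish the length identity $\|f\|_{\overline S}=\lambda(\chi(f))$: any word $\overline{t_1}\cdots\overline{t_k}$ over $\overline S$ representing $f$ lifts to $t_1\cdots t_k\in M$, whose cp-tuple has sum $k$ and reduces mod $d$ to that of $\chi(f)$, forcing $k\geq\lambda(\chi(f))$, with equality attained by a $\Pi$-expression of $\chi(f)$. Then, for $f,g\in\overline G$, Proposition \ref{cpLamb} gives $\text{cp}(\chi(f)\chi(g))=\text{cp}(\chi(f))+\pre{\chi(f)}{\text{cp}(\chi(g))}$, whose entries lie in $\{0,\dots,2d-2\}$; this product equals $\chi(fg)$ precisely when every entry stays below $d$. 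Each overflow subtracts $d$ from $\lambda$ upon reduction, so the non-overflow condition is equivalent to $\|fg\|_{\overline S}=\|f\|_{\overline S}+\|g\|_{\overline S}$, establishing that the required relations do hold in $M$.

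Finally, I turn to the presentation, which is the main obstacle. The defining relations $s(s*t)=t(t*s)$ of $M$ are themselves germ relations: both sides equal $\chi(\overline s\cdot\overline{s*t})$ since $\overline s\cdot\overline{s*t}=\overline t\cdot\overline{t*s}$ in $\overline G$ and $\|\overline s\cdot\overline{s*t}\|_{\overline S}=2=1+1$ (no overflow when $s\neq t$ and $d\geq 2$). The harder direction is to show that these germ relations \emph{suffice}: every equality in $M$ between products of elements of $\chi(\overline G)$ must be derivable from them. For this I would invoke the Garside structure established earlier, in particular the existence of a unique greedy normal form of any element of $M$ as a product of elements of $\text{Div}(\Delta^{d-1})=\chi(\overline G)$; the germ relations should suffice to rewrite any word over $\chi(\overline G)$ into this normal form, and uniqueness of the normal form then yields the universal property. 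This is the step that relies most heavily on general Garside theory, and care is needed to confirm that the germ relations alone enable the full rewriting, without any auxiliary relation.
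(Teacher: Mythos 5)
Your proposal is correct and follows essentially the same route as the paper, which defines $\chi$ by lifting each coefficient $\exp(\frac{2i\pi k}{d})$ to $q^k$, checks $\text{ev}_q\circ\chi=\text{Id}_{\overline G}$ and $\chi(\overline G)=\text{Div}(\Delta^{d-1})$ via the divisibility criterion, and disposes of the presentation claim in a single sentence ("forgetting that $q$ is a root of unity"), so your treatment is if anything more detailed than the paper's. The step you flag as delicate is precisely the one the paper glosses over, and it closes without invoking greedy normal forms: since each $\chi(f)$ factors into atoms of $S$ through germ relations and the defining relations $s(s*t)=t(t*s)$ of $M$ are themselves germ relations (as you observe), the abstractly presented monoid both surjects onto $M$ and receives a well-defined section from $M$ that is inverse to it on generators, so the two coincide.
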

\begin{proof} Consider the map $\chi\colon \overline G\to M$ defined by sending $\exp(\frac{2i\pi\cdot k}{d})$ to $q^k\in\mathbb Q[q]$ for $1\leq k<d$. It trivially satisfies $\text{ev}_q\circ\chi=\text{Id}_{\overline G}$. Its image is the elements of $M$ such that each non-zero coefficient is a power of $q$ strictly less than $d$, and thus identifies with $\text{Div}(\Delta^{d-1})$ by the characterization of divisibility. And the presentation amounts to forgetting that $q$ is a root of unity, thus generating $M$ as required.
\end{proof}
To work over $\overline G$, we will use the following corollary to restrict to classes of equivalence over the structure monoid.
\begin{cor}
The projection $\text{ev}_q\colon M\to\overline G$ is surjective.
\end{cor}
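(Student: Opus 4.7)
The plan is to lift every element of $\overline G$ explicitly through the I-structure bijection $\overline\Pi\colon (\mathbb{Z}/d\mathbb{Z})^S \to \overline G$, landing back in $M$ via Corollary \ref{tupPi}. This works because $\overline\Pi$ is, by its very construction, the composition $\text{ev}_q\circ\Pi$ restricted to nonnegative representatives; so preimages in $M$ can be read off directly from cp-tuples.

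Concretely, fix $\overline g \in \overline G$. By the bijection $\overline\Pi$ there is a unique tuple $(\overline a_1, \dots, \overline a_n) \in (\mathbb{Z}/d\mathbb{Z})^n$ with $\overline\Pi(\overline a_1, \dots, \overline a_n) = \overline g$. Lift each $\overline a_i$ to its canonical representative $a_i \in \{0, 1, \dots, d-1\} \subset \mathbb{N}$, giving a tuple $(a_1, \dots, a_n) \in \mathbb{N}^n$. By Corollary \ref{tupPi}, applying $\Pi$ to the sequence containing $a_i$ copies of $s_i$ (for each $i$) produces an element $m \in M$ with $\text{cp}(m) = (a_1, \dots, a_n)$. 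Since $\overline\Pi$ is induced from $\pi\circ\Pi$ and $\pi = \text{ev}_q$ on $G$, we obtain $\text{ev}_q(m) = \overline\Pi(\overline a_1, \dots, \overline a_n) = \overline g$.

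There is no real obstacle here: the surjectivity is essentially bookkeeping that the I-structure bijection, already established for $G$ and its quotient $\overline G$, factors through $M$. The only thing to verify is that the chosen lift lies in $M$ rather than merely in $G$, and this is automatic because we chose the nonnegative representatives $a_i \in \{0, \dots, d-1\}$ and $\Pi$ applied to tuples of elements of $S$ lands in $M$ by definition.
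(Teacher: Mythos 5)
Your proof is correct, but it takes a different route from the paper's. The paper argues by generation: $\overline G$ is generated by the images of $S\cup S^{-1}$, and since $\overline G$ is finite each $\overline{s_i}^{-1}$ is a positive power of $\overline{s_i}$, so the images of the positive generators alone generate $\overline G$, whence the image of $M$ is everything. You instead invoke the already-established bijection $\overline\Pi\colon(\mathbb Z/d\mathbb Z)^S\to\overline G$, lift the cp-tuple to its canonical representative in $\{0,\dots,d-1\}^n$, and realize it in $M$ via Corollary \ref{tupPi}; this is sound, since $\overline\Pi$ is by definition induced by $\pi\circ\Pi$ and $\Pi$ of a nonnegative tuple lands in $M$. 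What your version buys is an explicit canonical preimage --- in fact you are reconstructing exactly the section $\chi\colon\overline G\to M$ from the Garside-germ proposition (the unique divisor of $\Delta^{d-1}$ mapping to $\overline g$), so your argument amounts to exhibiting a set-theoretic splitting of $\text{ev}_q|_M$ rather than merely its surjectivity. The paper's version is slightly more economical in its dependencies (it needs only finiteness of $\overline G$ and the inclusion $M\hookrightarrow G$, not the $\overline\Pi$ bijection), but both are short and both rest on results already proved at that point in the text.
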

\begin{proof}
$G$ is generated by all the elements of $S$, so its quotient $\overline G$ is also generated by $S$. Moreover, as $\overline G$ is finite, inverses can be constructed from only positive generators, thus $G\to\overline G$ factors through $M$.
\end{proof}
\begin{ex}
Let $S=\{s_1,\dots,s_n\}$ with $\psi(s_i)=(12\dots n)=\sigma$ for all $i$. Then for any $s\in S$, $k\in\mathbb Z$: $ks_i=D_s^k P_{\sigma^k}$. The Dehornoy's class of $S$ is $n$ and $\overline G$ is generated by the $\overline{s_i}=\diag{1,\dots,\zeta_n,\dots,1}P_\sigma$ where $\zeta_n=\exp(\frac{2i\pi}{n})$.

To recover $G$ from $\overline G$, one simply takes all the elements of $\overline G$ and forget that $q$ is a root of unity in the following sense: when computing the product of two elements and finding a coefficient $q^a$ with $a>d$, we do not use that $q^d=1$ and just consider it as a new element. So for instance in $\langle \chi(\overline G)\rangle$ with $n=4$: $$\chi\!\left(\overline{3s_1}\right)\!\chi\!\left(\overline{2s_4}\right)=\chi(3\overline{s_1})\chi(2\overline{s_4})=\begin{pmatrix}0&0&0&q^3\\1&0&0&0\\0&1&0&0\\0&0&1&0\end{pmatrix}\begin{pmatrix}0&0&1&0\\0&0&0&1\\1&0&0&0\\0&q^2&0&0\end{pmatrix}\! =\!\begin{pmatrix}0&q^5&0&0\\0&0&1&0\\0&0&0&1\\0&0&0&0\end{pmatrix}=5s_1.$$ Because $5>4$, we obtain a new element different from $\chi\left(\overline{5s_1}\right)=\chi\left(\overline{s_1}\right)$.
\end{ex}
The quotient group $\overline G$ defined above is called a Coxeter-like group, it was first studied by Chouraqui and Godelle in \cite{godelle} for $d=2$ and generalized by Dehornoy in \cite{rcc}. 

Fix $\overline G$ a Coxeter-like group obtained from a cycle set $S$ of cardinal $n$ and class $d\geq 2$ (so that $\overline G$ is not trivial).

\begin{defi}
We define a function $\text{l}_d\colon\{0,1,\dots,d-1\}\to \{0,1,\dots,\lfloor\frac{d}{2}\rfloor\} $ by:
	\begin{equation}
	\label{lq}
	\forall k\in\{0,1,\dots,d-1\},\text{l}_d(k)=\begin{cases}k,&\: \text{if } k\leq\frac{d}{2}\\k-d,&\:\text{if } k>\frac{d}{2}.\end{cases}
	\end{equation}
	And we define $\overline\ell(c)=\sum\limits_{i=1}^n |\text{l}_d(c_i)|$.
\end{defi}
Note that $\overline\ell$ corresponds to $\ell$ with the projection $\mathbb Z\to\mathbb Z/d\mathbb Z$ but with representatives in $]-\frac{d}{2},\frac{d}{2}]\cap\mathbb Z$ instead of $[0,d-1[\cap \mathbb Z$. Because, if we have $q^6=1$, the shortest way to write $q^2$ is $q\cdot q$ but to write $q^4$ we should use $q^{-1}\cdot q^{-1}$.
\begin{prop} The length of an element $g\in\overline G$ over $\overline S=\pi(S)$ is given by $\overline\ell$.
\end{prop}
\begin{proof}
The shortest way to write $q^k$ for $k\in\{0,\dots,d-1\}$ is using $q\cdot...\cdot q$ if $k\leq\frac{d}{2}$ and otherwise, as $q^d=1$, we write it as $q^{-1}\cdot...\cdot q^{-1}$.
\end{proof}
\begin{rmk}
We say that a word over $\overline S=\pi(S)$ is reduced in $\overline G$ if it has length $\overline\ell(w)$ when seen as an element of $\overline G$. 

For instance, if $d>2$ then $w=s(s*s)$ is reduced as it represents $2s$ which has $\ell=2$ but the word corresponding to $d_s=sT(s)\dots T^{d-1}(s)$ is not as it has $\overline\ell=0$. More generally, the word associated to $\sum\limits_{s\in S}g_ss$ is reduced in $\overline G$ if each $g_s$ is strictly less than $d$.
\end{rmk}
\subsection{Non-degeneracy}\label{nd}
Here we give a new proof of Rump's result on the non-degeneracy of finite cycle sets \cite{rump}, that is the fact that $s\mapsto s*s$ is bijective; in Dehornoy's paper it is used to obtain the bijectivity of $(s,t)\mapsto (s*t,t*s)$. Here we prove both of those results using the previous subsections, the proof of Rump's result has the advantage of being a simple direct consequence of the brace structure compared to the proof in \cite{rump} which involves several steps and constructions (such as using the so-called retraction of a cycle set).

Recall that we fix $(S,*)$ a finite cycle set of size $n$ with structure monoid (resp. group) $M$ (resp. $G$).
\begin{lem}[\cite{rump}]
\label{diagBij}
$\forall g,h\in G, \lambda^{-1}_g(g)=\lambda^{-1}_h(h)\Longleftrightarrow g=h$.

In particular, if $s,t\in S$ such that $s*s=t*t$ (i.e $\lambda^{-1}_s(s)=\lambda^{-1}_t(t)$, because $S$ is finite so we can invert the permutations $\psi(s)$), then $s=t$.
\end{lem}
\begin{proof}
If $g=h$ then trivially $\lambda^{-1}_g(g)=\lambda^{-1}_h(h)$. Suppose $\lambda_g(g)=\lambda_h(h)$, we want to show $gh^{-1}=1$. 

From Lemma \ref{ab-1} we find  : 
$$gh^{-1}=-\lambda_g(\lambda_{h^{-1}}(h))+g=-\lambda_g(\lambda_{g^{-1}}(g))+g=-\lambda_g(\lambda_{g}^{-1}(g))+g=-g+g=0=1.$$
\end{proof}
The following will be very useful to switch from working on the left to working on the right when looking at divisibility : 
\begin{prop}\label{o(T)}
\begin{enumerate}[label=(\roman*)]
\item The diagonal map $T:s\mapsto s*s$ is a bijection of $S$.
\item Let $o$ be the order of $T$. For any integer $k$ we have $ks=sT(s)\dots T^{o-1}(s)sT(s)\dots$ with exactly $k$ terms.
\item The order $o$ of $T$ divides $d$. In particular, for any integer $k$ and any $s$ in $S$, we have $\lambda^{-1}_{ks}(s)=T^{k}(s)$ and $kds=\left(sT(s)\dots T^{o-1}(s)\right)^k$.
\end{enumerate}
\end{prop}
\begin{proof}
As $S$ is finite and $T$ is injective by the previous lemma, it is bijective and so has finite order. The second point follows directly from an easy induction : $\lambda^{-1}_{(k+1)s}(s)=\lambda^{-1}_{ks+s}(s)=\lambda^{-1}_{\lambda_{ks}^{-1}(s)}(\lambda_{ks}^{-1}(s))=\lambda^{-1}_{T^{k}(s)}(T^{k}(s))=T^{k}(s)*T^{k}(s)=T^{k+1}(s)$ and then $(k+1)s=ks+s=ks\cdot \lambda_{ks}^{-1}(s)=ks\cdot T^{k}(s)=sT(s)\dots T^{k}(s)$  and, as $T^o(s)=s$, we regroup as many words $sT(s)\dots T^{o-1}(s)$ as possible. For the third  point, $T^d(s)=\lambda^{-1}_{ds}(s)=s$ as $ds\in\text{Soc}(G)$.
\end{proof}
\begin{cor} Let $G^t$ be the set of transposes of the elements of $G$ seen as matrices. Then $G^t$ is the structure group of a cycle set structure on $S^t$.
\end{cor}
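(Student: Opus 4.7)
I would prove the corollary by applying Theorem \ref{condCS} to $G^t\subset\M_n$. First, since $(gh)^t=h^tg^t$ and $g\mapsto g^t$ is an involutive anti-automorphism of $\M_n$, the set $G^t$ is a subgroup, generated by $\{s_1^t,\dots,s_n^t\}$. Transposition also preserves $\M_n^+$ (the multiset of non-zero entries is unchanged), so $(G^t)^+=(G^+)^t$ and its atoms are exactly $\{s_1^t,\dots,s_n^t\}$, of cardinal $n$.

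The key computation is to show that each $s_i^t$ has diagonal part of the form $D_j$ for some $j$, so that after a suitable re-labelling of the atoms the hypothesis $D_{s_i}=D_i^k$ of Theorem \ref{condCS} holds with $k=1$. Starting from $s_i=D_iP_{\psi(s_i)}$ and using $P_\sigma^t=P_{\sigma^{-1}}$ together with Lemma \ref{conj}, a short manipulation yields
\[s_i^t=D_{T(s_i)}\,P_{\psi(s_i)^{-1}},\]
where $T\colon s\mapsto s*s$ is the diagonal map. By Proposition \ref{o(T)}, $T$ is a bijection of $S$, so re-indexing $S^t$ as $(s_{T^{-1}(1)}^t,\dots,s_{T^{-1}(n)}^t)$ places an atom with diagonal $D_i$ in position $i$, matching the hypothesis of Theorem \ref{condCS}.

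Next, I would verify condition (i) of Theorem \ref{condCS}, namely that $G^t$ is permutation-free. This is immediate: if $P\in G^t$ is a permutation matrix, then $P^{-1}=P^t\in G$ is too, and Proposition \ref{istruct} forces $P^t=\text{Id}$, hence $P=\text{Id}$. Applying the implication (i)$\Rightarrow$(iii) of Theorem \ref{condCS} then yields that $S^t$, endowed with the operation $s_i^t*s_j^t:=\psi(s_i^t)(s_j^t)=\psi(s_i)^{-1}(s_j^t)$, is a cycle set whose structure group is precisely $G^t$.

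The only real subtlety is the bookkeeping around the atoms: one must check that $(G^t)^+$ really is $(G^+)^t$ (which follows because transposition is entry-preserving) and that the re-labelling via $T^{-1}$ is available, which is exactly what the bijectivity of $T$ in Proposition \ref{o(T)} provides. Once these are in place, the corollary falls out of Theorem \ref{condCS} without further work.
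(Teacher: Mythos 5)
Your proposal is correct and follows essentially the same route as the paper: identify the generators of $G^t$ as $S^t$, use the bijectivity of the diagonal map $T$ (Proposition \ref{o(T)}) to match each atom's diagonal part with some $D_i$, observe that permutation-freeness transfers under transposition, and invoke Theorem \ref{condCS}. Your explicit computation $s_i^t=D_{T(s_i)}P_{\psi(s_i)^{-1}}$ and the verification that $(G^t)^+=(G^+)^t$ merely spell out details the paper leaves implicit.
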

Explicitly $\psi(s^t)=\psi^{-1}(T^{-1}(s))$.
\begin{proof}
First note that, because $G$ is generated by $S$, $G^t$ is generated by $S^t$. As $T$ is a bijection, for each $u$ the set $S^t$ contains exactly one element $s^t$ such that $D_{s^t}=D_u$, that is $s=T^{-1}(u)$. Moreover, as $G$ is permutation-free, so is $G^t$. So by Theorem \ref{condCS} it is the structure group of a cycle set $S^t$.
\end{proof}
\begin{ex}
Let $S=\{s_1,s_2,s_3\}$ with $\psi(s_1)=\psi(s_2)=\psi(s_3)=(123)=\sigma$. Then $s_i^t=(D_i P_\sigma)^t=P_\sigma^{-1}D_i=\pre{\sigma}{D_i}P_\sigma^2$, so for example $$s_2^t=\begin{pmatrix}0&1&0\\0&0&q\\1&0&0\end{pmatrix}^t=\begin{pmatrix}0&0&1\\1&0&0\\0&q&0\end{pmatrix}=D_3P_{(132)}=D_{\sigma(2)}P_\sigma^{-1}.$$
\end{ex}
In particular, this can be used to work on the columns in $G$: if we want an element of $G$ with the coefficient $q^{a_1},\dots,q^{a_n}$ read column by column, we can work in $G^t$, compute $\sum\limits_{i=1}^na_iT^{-1}(s_i)$ and transpose it to get the desired element in $G$.

Moreover, this also implies the easier characterization of divisibility in $G$ : 
\begin{cor}
\label{frac2}
Let $g,h$ be in $M$. Write $g=\sum\limits_{s\in S}g_ss$ and $h=\sum\limits_{s\in S}h_ss$ with $g_s,h_s\in\mathbb N$.

Then $g$ left-divides $h$ if and only if $g_s\leq h_s$ for all $s$.

Similarly, $g$ right-divides $h$ if and only if $g^t_{s^t}\leq h^t_{s^t}$ for all $s$, where $s^t=T^{-1}(s)$.
\end{cor}
\begin{cor}
\label{lcmgcd2}
Let $g,h$ be in $M$. Write $g=\sum\limits_{s\in S}g_ss$ and $g=\sum\limits_{s\in S}h_ss$ with $g_s,h_s\in\mathbb N$.

Then $g\wedge h=\sum\limits_{s\in S}\text{min}(g_s,h_s)s$ and $g\vee h=\sum\limits_{s\in S}\text{max}(g_s,h_s)s$.

Similarly $g\wedge_r h=\left(\sum\limits_{s\in S}\text{min}(g^t_{s^t},h^t_{s^t})s^t\right)^t$ and $g\vee_r h=\left(\sum\limits_{s\in S}\text{max}(g^t_{s^t},h^t_{s^t})s^t\right)^t$.
\end{cor}
\begin{prop}
For any $k\in \mathbb N$, $\psi(ks)(s)=T^k(s)$. In particular, the map $s\mapsto \psi(ks)(s)$ is a bijection of $S$. 
\end{prop}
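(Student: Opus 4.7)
The plan is to reduce everything to Proposition \ref{pimat}, which tells us exactly how $\psi$ acts for a $\Pi$-expression. Since $s^{[k]}$ is by definition $\Pi_k(s,\dots,s)$ (with $k$ copies of $s$), applying Proposition \ref{pimat} with all entries equal to $s$ yields
\[\psi(s^{[k]})(s) = \Omega_{k+1}(s,s,\dots,s),\]
where now there are $k+1$ copies of $s$. So the task becomes identifying this iterated $\Omega$ with a power of the diagonal map $T$.

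Next I would set $\omega_k := \Omega_k(s,\dots,s)$ (with $k$ copies of $s$) and prove by induction that $\omega_k = T^{k-1}(s)$. The base case is $\omega_1 = s = T^0(s)$. For the induction step, the defining recursion for $\Omega$ gives
\[\omega_{k+1} = \Omega_k(s,\dots,s) * \Omega_k(s,\dots,s) = \omega_k * \omega_k = T(\omega_k),\]
and the induction hypothesis yields $\omega_{k+1} = T(T^{k-1}(s)) = T^k(s)$. Substituting back, $\psi(s^{[k]})(s) = \omega_{k+1} = T^k(s)$, which is the first assertion.

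For the second assertion, Proposition \ref{o(T)}(i) already established that $T$ is a bijection of $S$, hence $T^k$ is a bijection of $S$ for every $k \in \mathbb{N}$. Combined with the formula $\psi(s^{[k]})(s) = T^k(s)$, this shows that the map $s \mapsto \psi(s^{[k]})(s)$ is bijective. No serious obstacle is expected here: the entire argument is a direct unwinding of the $\Pi$/$\Omega$ formalism together with the already-proved bijectivity of $T$.
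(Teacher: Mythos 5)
Your proof is correct and follows essentially the same route as the paper: both reduce the statement to Proposition \ref{pimat} and then run a one-line induction, the only (cosmetic) difference being that you induct on $\Omega_{k+1}(s,\dots,s)$ via the defining recursion for $\Omega$, while the paper inducts directly on $\psi(s^{[k]})(s)$ using the composition formula $\psi(s^{[k+1]})(s)=\psi(T^k(s))\bigl(\psi(s^{[k]})(s)\bigr)$. The final appeal to Proposition \ref{o(T)}(i) for the bijectivity of $T$, hence of $T^k$, matches the paper exactly.
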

\begin{proof}
This directly follows from Proposition \ref{o(T)} and the fact that $\lambda_g^{-1}=\psi(g)$ for any $g\in G$.
\end{proof}
\begin{cor}\label{-k} For any $k$ in $\mathbb N$ and $s$ in $S$, let $t=(T^k)^{-1}(s)$ then $-ks=(kt)^{-1}$.
\end{cor}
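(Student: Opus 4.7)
The plan is to leverage the fact (from the corollary following Proposition~\ref{istruct}) that elements of $G$ are uniquely determined by their diagonal part. Since $s^{[-k]}$ is characterized by $D_{s^{[-k]}}=D_s^{-k}$, it suffices to verify that $\Pi_k(t,\dots,t)^{-1}$ has diagonal part exactly $D_s^{-k}$.

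First I would set $g=\Pi_k(t,\dots,t)=t^{[k]}$. By Proposition~\ref{pimat}, $D_g=D_t^k$, and by the proposition immediately preceding the statement, $\psi(g)(t)=\psi(t^{[k]})(t)=T^k(t)=s$ by the choice of $t$. Starting from the left decomposition $g=D_g P_g$, invert to obtain $g^{-1}=P_g^{-1}D_g^{-1}$, and re-express in left-decomposition form via Proposition~\ref{PDDP} (a direct consequence of Lemma~\ref{conj}): $g^{-1}=\pre{\psi(g)^{-1}}{D_g^{-1}}\cdot P_g^{-1}$, so $D_{g^{-1}}=\pre{\psi(g)^{-1}}{D_g^{-1}}$ and $P_{g^{-1}}=P_g^{-1}$.

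The crux is then a one-line row-tracking computation. The matrix $D_g^{-1}=D_t^{-k}$ is diagonal with $q^{-k}$ on the row indexed by $t$. Lemma~\ref{conj} says that conjugation by a permutation matrix with underlying permutation $\sigma$ sends the $i$-th row to the $\sigma^{-1}(i)$-th row; applying this with $\sigma=\psi(g)^{-1}$ gives $\sigma^{-1}(t)=\psi(g)(t)=s$. Hence $D_{g^{-1}}$ has its $q^{-k}$ entry on row $s$, yielding $D_{g^{-1}}=D_s^{-k}=D_{s^{[-k]}}$, and permutation-freeness of $G$ (Proposition~\ref{istruct}) forces $g^{-1}=s^{[-k]}$.

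I do not foresee any substantial obstacle: the argument is essentially careful bookkeeping of row indices under a single conjugation, and every ingredient needed (the formula $D_{\Pi_k(t_1,\dots,t_k)}=D_{t_1}\cdots D_{t_k}$, the identity $\psi(s^{[k]})(s)=T^k(s)$, the row-shuffle description of conjugation, and the rigidity given by permutation-freeness) has just been established. The only mildly subtle point is the direction of the permutation: inversion introduces $\psi(g)^{-1}$ and Lemma~\ref{conj} introduces another inverse, the two cancelling to yield the ``forward'' action $\psi(g)(t)=s$ that makes everything line up.
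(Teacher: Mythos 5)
Your proposal is correct and follows essentially the same route as the paper: both invert $t^{[k]}=D_t^kP_{t^{[k]}}$, push the permutation matrix past the diagonal via conjugation, use $\psi(t^{[k]})(t)=T^k(t)=s$ to track the row of the $q^{-k}$ entry, and conclude by uniqueness of the diagonal part. Your extra care about the two cancelling inverses is exactly the computation the paper carries out in its displayed chain of equalities.
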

\begin{proof}
Let $t\in S$, we have $$(kt)^{-1}=(D_t^kP_{kt})^{-1}=P_{(kt)}^{-1}D_t^{-k}=\pre{\psi(kt)^{-1}}{D_t^{-k}}P_{kt}^{-1}=D_{\psi(kt)(t)}^{-k}P_{kt}^{-1}=D_{T^k(t)}^{-k}P_{kt}^{-1}.$$ Thus, if $t=(T^k)^{-1}(s)$, we find $D_{(kt)^{-1}}=D^{-k}_s$.

From Proposition \ref{o(T)} we have $t=\lambda_{kt}(T^k(t))$, so $$kt\cdot(-ks)=kt+\lambda_{kt}(-ks)=kt-k\lambda_{kt}(s)=kt-k\lambda_{kt}(T^k(t))=kt-kt=0=1.$$
\end{proof}
\begin{prop}[\cite{rcc}]
\label{bij}
The map $(s,t)\mapsto (s*t,t*s)$ is bijective.
\end{prop}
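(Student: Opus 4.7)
My plan is to produce an explicit left inverse of $r$: given any $(s,t)\in S\times S$, I will show that both $s$ and $t$ can be recovered from the pair $(a,b):=r(s,t)=(s*t,t*s)$ by applying the bijectivity of the diagonal map $T$. Since $S\times S$ is finite, injectivity will automatically upgrade to bijectivity.

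I would start by applying the cycle-set equation $(s*t)*(s*u)=(t*s)*(t*u)$ with the specialization $u=s$. The right-hand side then collapses to $(t*s)*(t*s)=T(b)$, while the left-hand side becomes $a*T(s)=\psi(a)(T(s))$. Rearranging yields $T(s)=\psi(a)^{-1}(T(b))$, and since $T$ is a bijection of $S$ by Proposition \ref{o(T)}, this recovers $s$ uniquely from $(a,b)$ as $s=T^{-1}\bigl(\psi(a)^{-1}(T(b))\bigr)$. I would then do the same with $u=t$: the left-hand side becomes $a*a=T(a)$ and the right-hand side becomes $b*T(t)=\psi(b)(T(t))$, so $T(t)=\psi(b)^{-1}(T(a))$ and $t=T^{-1}\bigl(\psi(b)^{-1}(T(a))\bigr)$ is also uniquely determined. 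Hence $(s,t)$ is reconstructible from $(a,b)$, $r$ is injective, and bijectivity follows.

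The entire argument rests on Proposition \ref{o(T)} (the bijectivity of $T$), which is exactly why that result had to be established first in this section. Once $T^{-1}$ is available, the whole proof is just two specializations of the cycle-set equation, so I do not expect any real obstacle; the only delicate point is noticing that $u=s$ and $u=t$ are the two specializations that collapse one side of the equation into a value of $T$ and thus make the bijectivity of $T$ directly usable.
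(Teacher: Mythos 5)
Your proof is correct, and it takes a genuinely different route from the paper. The paper works inside the structure group: from $s*t=s'*t'$ and $t*s=t'*s'$ it computes $\Pi_2(s,t)\Pi_2(s',t')^{-1}$ two ways to get $ss'^{-1}=tt'^{-1}$ as monomial matrices, then compares diagonal parts and invokes Lemma \ref{diagBij} to conclude $s=s'$, $t=t'$. You instead stay entirely at the level of the cycle set: specializing the cycle-set equation (\ref{RCL}) at $u=s$ gives $\psi(a)(T(s))=T(b)$ and at $u=t$ gives $\psi(b)(T(t))=T(a)$, where $(a,b)=(s*t,t*s)$, so $s=T^{-1}\bigl(\psi(a)^{-1}(T(b))\bigr)$ and $t=T^{-1}\bigl(\psi(b)^{-1}(T(a))\bigr)$, which is an explicit left inverse; injectivity plus finiteness then gives bijectivity. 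Both arguments ultimately rest on the injectivity of the diagonal map $T$ (Lemma \ref{diagBij}, packaged as Proposition \ref{o(T)} in your case), and on finiteness of $S$. What your approach buys is a closed formula for the inverse of $(s,t)\mapsto(s*t,t*s)$ and independence from the monomial-matrix machinery once $T$ is known to be bijective; what the paper's approach buys is consistency with its overarching theme of deriving everything from the representation and the permutation-freeness of $G$.
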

\begin{proof}
As $S$ is finite, so is $S\times S$, so we only have to show injectivity. Assume $s*t=s'*t'$ and $t*s=t'*s'$ for some $s,t,s',t'\in S$.
Then, from $s*t=s'*t'$ and $s+t=s(s*t)$, we have $\lambda_{s+t}\lambda^{-1}_{s'+t'}=\lambda_{s(s*t)}\lambda^{-1}_{s'(s'*t')}=\lambda_s\lambda_{s*t}\lambda^{-1}_{s'*t'}\lambda^{-1}_{s'}=\lambda_s\lambda^{-1}_{s'}$. Thus $\lambda_{s+t}\lambda^{-1}_{s'+t'}(s')=\lambda_s(s'*s')$.
As $s+t=t+s=t(t*s)$ we have by symmetry $\lambda_{s+t}\lambda^{-1}_{s'+t'}(t')=\lambda_t(t'*t')$. Thus $(s+t)(s'+t')^{-1}=-\lambda_{s+t}\lambda^{-1}_{s'+t'}(s'+t')+(s+t)=-\lambda_s(s'*s')-\lambda_t(t'*t')+(s+t)$

On the other hand, as $s+t=s(s*t)$, we have $(s+t)(s'+t')^{-1}=s(s*t)(s'*t')^{-1}s'^{-1}=ss'^{-1}=-\lambda_s\lambda_{s'}^{-1}(s')+s=-\lambda_s(s'*s')+s$.

Combining the above equalities gives $-\lambda_s(s'*s')+s=(s+t)(s'+t')^{-1}=-\lambda_s(s'*s')-\lambda_t(t'*t')+(s+t)$, thus we deduce $t-\lambda_t(t'*t')=0$, so $t*t=t'*t'$ and by the bijectivity of $T$ we find $t=t'$. By the same symmetry argument we obtain $s=s'$, and this concludes the proof.
\end{proof} 
\section{Bounding of the Dehornoy's class}\label{coxlike}
In this section our goal is to study the Dehornoy's class of the solution and some consequences on the germs. The main interest is the use of the monomial approach that provides a nice combinatorial tool that was used for computer analysis of cycle sets that were enumerated up to size 10 by Akgün, Mereb and Vendramin in \cite{enumeration}.

We fix a finite cycle set $(S,*)$ of size $n$  with structure monoid (resp. group) $M$ (resp. $G$), of Dehornoy's class $d>1$ and associated germ $\overline G$.
\begin{defi} The permutation group $\mathcal G_S$ associated to a cycle set $S$ is the subgroup of $\mathfrak S_n$ generated by $\psi(s_i), 1\leq i\leq n$.

When the context is clear we will simply write $\mathcal G$.
\end{defi}
$\mathcal G$ is precisely the image of the map sending $g$ in $G$ to $P_g$. Note that, as $P_\sigma P_\tau=P_{\tau\sigma}$, we have that $\psi(gh)=\psi(h)\psi(h)$, thus an antimorphism. This won't pose problem here, as we'll only use $\psi(s^n)=\psi(s)^n$. Equivalently, it is the image of the morphism given by the restriction $\lambda|^S\colon (G,\cdot)\to \text{Aut}(S)$.
\begin{defi}[\cite{cycle}] A subset $X$ of $S$ is said to be $\mathcal G$-invariant if for every $s\in S$, $\psi(s)(X)\subset X$.
$S$ is called decomposable if there exists a proper partition $S=X\sqcup Y$ such that $X,Y$ are $\mathcal G_S$-invariant.

In this case $(X,*_{|_X})$ and $(Y,*_{|_Y})$ are also cycle sets.

A cycle set that is not decomposable is called indecomposable.
\end{defi}
\begin{ex}
For $S=\{s_1,s_2,s_3,s_4\}$ and $\psi(s_1)=\psi(s_2)=(2143)$, $\psi(s_3)=\psi(s_4)=(2134)$, we have $\mathcal G=\langle (2143),(2134)\rangle<\Sym_n$. We see that $X=\{s_1,s_2\}$ and $Y=\{s_3,s_4\}$ are both $\mathcal G$-invariant and their respective cycle set structure are given by $\psi_X(s_1)=\psi_X(s_2)=(12)$ and $\psi_Y(s_3)=\psi_Y(s_4)=(34)$.
\end{ex}
In personal communications \cite{raul}, the following conjecture was mentioned: 
\begin{conj}[\cite{raul}]
\label{conj2}
 If $S$ is indecomposable then $d\leq n$.
\end{conj}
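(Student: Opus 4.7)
The strategy has three movements: first, upgrade indecomposability to transitivity of $\mathcal G$ on $S$; second, identify the Dehornoy class $d$ with the order $o(T)$ of the diagonal bijection $T\colon s\mapsto s\ast s$; third, bound $o(T)$ by $n$ using transitivity.

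First I would observe that every $\mathcal G$-orbit of $S$ is a $\mathcal G$-invariant subset, so that if $\mathcal G$ were not transitive the partition into orbits would yield a proper decomposition $S=X\sqcup Y$ into $\mathcal G$-invariant subsets, contradicting indecomposability. Thus indecomposability is equivalent to transitivity of $\mathcal G$ on $S$.

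Second, I would attempt to establish the stronger equality $d=o(T)$, which holds in every small example of the paper. By Proposition~\ref{o(T)} we already have $o(T)\mid d$, so only the reverse divisibility is missing: I must show that $s^{[o(T)]}$ is diagonal for every $s\in S$. By Proposition~\ref{pimat}, $\psi(s^{[o(T)]})(t)=\Omega_{o(T)+1}(s,\dots,s,t)$, and at $t=s$ this evaluates to $T^{o(T)}(s)=s$, so $\psi(s^{[o(T)]})$ already fixes $s$. To extend this to an identity on all of $S$, I would combine the compatibility $T\circ\psi(s)=\psi(t\ast s)\circ T$ at $t$ (obtained by setting $u=t$ in the cycle set equation), transitivity of $\mathcal G$, and the permutation-freeness of $G$ (Proposition~\ref{istruct}): for any $t\in S$, pick $g\in G$ sending $s$ to $t$, conjugate $s^{[o(T)]}$ by $g$ in the monomial representation using Lemma~\ref{conj} and Corollary~\ref{monprod}, and argue that the resulting permutation part fixes $t$.

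Third, granted $d=o(T)$, I would deduce $o(T)\leq n$ from indecomposability. Write $T$ as a product of disjoint cycles of lengths $k_1,\dots,k_r$ with $\sum k_i=n$, so that $o(T)=\operatorname{lcm}(k_1,\dots,k_r)$. A priori this lcm can exceed $n$, so the argument must use transitivity of $\mathcal G$ to constrain the $k_i$. The idea is that, for any two $s,t\in S$, an element of $\mathcal G$ sending $s$ to $t$ transports the ``local'' $T$-behaviour at $s$ to that at $t$, and combining this with the identity $\psi(s^{[k]})(s)=T^k(s)$ should force the $k_i$ to share enough common structure that their lcm is at most $n$ (in the cleanest case, all $k_i$ are equal and divide $n$).

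\textbf{Main obstacle.} The hardest step is the propagation in the second movement: knowing that $\psi(s^{[o(T)]})$ fixes $s$ is only a pointwise statement, and transitivity of $\mathcal G$ alone does not upgrade it to the identity because $T$ and $\mathcal G$ do not commute and conjugation in $G$ does not intertwine cleanly with the $[\,\cdot\,]$-operation. Unwinding the precise interaction between $T$-dynamics and the $\mathcal G$-action, via a careful bookkeeping in the monomial representation, is where the bulk of the technical work lies; once this is done, the concluding bound $o(T)\leq n$ should follow from a comparatively standard cycle-structure argument.
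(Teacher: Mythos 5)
First, a point of order: the statement you are proving is stated in the paper as Conjecture \ref{conj2}, attributed to a personal communication, and the paper offers \emph{no proof} of it. The paper only records the example $\psi(s)=(12\dots n)$ showing the bound is attained, the divisibilities $o(T)\mid d\mid\#\mathcal G$ and $n\mid\#\mathcal G$ for indecomposable $S$ (Propositions \ref{o(T)}, \ref{ddivG} and Lemma \ref{indN}), and a bound by $a_n$ (not by $n$) in the special case of square-free $S$ with abelian $\mathcal G$ (Proposition \ref{conj-proof}). So there is no proof in the paper to compare yours against, and a complete argument would be a genuinely new result.

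Second, your plan has two gaps, each of which is essentially the whole difficulty. (1) The equality $d=o(T)$ is not established. The paper gives only $o(T)\mid d$, and the reverse divisibility requires showing that $\psi(s^{[o(T)]})$ is the identity on all of $S$, whereas $\psi(s^{[o(T)]})(s)=T^{o(T)}(s)=s$ is a single fixed point. The propagation you sketch does not work as stated: the relation obtained from the cycle set equation at $u=t$ reads $T(s*t)=(t*s)*T(t)$, so the ``conjugating'' permutation $\psi(t*s)$ depends on the point $t$, and conjugating $s^{[k]}$ by an element $g\in G$ with $\psi(g)(s)=t$ does not produce $t^{[k]}$ --- the bracket operation is not equivariant under the $G$-action in this way. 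You flag this as the main obstacle, but flagging it does not close it; as written, your second movement is a restatement of the conjecture in a strictly stronger form, and it is not even clear that $d=o(T)$ holds for all indecomposable cycle sets. (2) Even granting $d=o(T)$, the bound $o(T)=\operatorname{lcm}(k_1,\dots,k_r)\le n$ is false for general permutations (cycles of lengths $2,3,5$ on $10$ points give $\operatorname{lcm}=30$), and the transitivity argument that is supposed to constrain the cycle lengths of $T$ is again a hope rather than an argument; note that in Example \ref{exdec} the cycle lengths of $T$ are $6$ and $2$, which are not all equal and do not divide $n=8$, so the ``cleanest case'' you invoke does not even cover the paper's own example. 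Until both steps are supplied, the proposal is a research programme, not a proof.
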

Note that, taking $S=\{s_1,\dots,s_n\}$ with $\psi(s)=(12\dots n)$ for all $s$ provides an indecomposable cycle set that attains this bound.
\par Using a python program based on the proof of Proposition \ref{class} and the enumeration from \cite{enumeration}, we find the following maximum values of the class of cycle sets of size $n$:

\begin{center}
\begin{tabular}{c|cccccccccc}
$n$&1&2&3&4&5&6&7&8&9&10\\\hline
$d_{\text{max}}$&1&2&3&4&6&8&12&15&24&30
\end{tabular}
\end{center}
This corresponds to the OEIS sequence \href{https://oeis.org/A034893}{A034893} \textit{"Maximum of different products of partitions of n into distinct parts"}, studied in \cite{part} where the following is proved:
\begin{lem}[\cite{part}] Let $n\geq 2$ be written as $n=\mathcal T_m+l$ where $\mathcal T_m$ is the biggest triangular number ($\mathcal T_m=1+2+\dots+m$) with $\mathcal T_m\leq n$ (and so $l\leq m$). Then the maximum value $$a_n=\max\left(\left\{\prod\limits_{i=1}^k n_i\middle|k\in\mathbb N, 1\leq n_1<\dots<n_k, n_1+\dots+n_k=n\right\}\right)$$ is given by  $$a_n=a_{\mathcal T_m+l}=\begin{cases}\frac{(m+1)!}{m-l},&0\leq l\leq m-2\\\frac{m+2}{2}m!,&l=m-1\\(m+1)!,&l=m.\end{cases}$$
\end{lem}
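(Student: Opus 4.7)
The claim is a combinatorial optimization: maximize $\prod A$ over partitions $A$ of $n$ into distinct positive parts. My plan has three phases.

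\textbf{Phase 1: Structural reduction by exchange.} Let $A=\{a_1<\cdots<a_k\}$ be a maximizer of $\prod A$. I would establish three exchange lemmas:
\begin{itemize}
\item[(i)] If $k\geq 2$ then $a_1\geq 2$: otherwise replacing $\{1,a_k\}$ by $\{a_k+1\}$ (absent from $A$ since $a_k=\max A$) preserves the sum and multiplies the product by $(a_k+1)/a_k>1$.
\item[(ii)] Every consecutive difference satisfies $a_{i+1}-a_i\leq 2$: if $a_{i+1}-a_i\geq 3$, the swap $\{a_i,a_{i+1}\}\mapsto\{a_i+1,a_{i+1}-1\}$ yields distinct parts and multiplies the product by $(a_i+1)(a_{i+1}-1)/(a_ia_{i+1})>1$.
\item[(iii)] At most one index $i$ has $a_{i+1}-a_i=2$: if two such $i<j$ existed, then setting $a=a_i$, $b=a_{j+1}$ gives $a+1,b-1\notin A$ (by the gap-$2$ conditions at $i$ and $j$) and $b-a\geq 4$ (telescoping the differences, at least two of which are $\geq 2$), so the same swap strictly improves $A$.
\end{itemize}
Consequently a maximizer is of the form $\{p,p+1,\ldots,q\}$ or $\{p,p+1,\ldots,q\}\setminus\{j\}$ with $p\geq 2$ and $p<j<q$.

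\textbf{Phase 2: Enumeration of candidates.} The elementary bound $\mathcal{T}_{|A|}\leq n<\mathcal{T}_{m+1}$ forces $|A|\leq m$. For each admissible length the interval-with-at-most-one-hole partitions of $n=\mathcal{T}_m+l$ form an explicit finite family, whose product equals $q!/(p-1)!$ (no hole) or $q!/((p-1)!\,j)$ (one hole), subject to a single linear sum constraint. Solving this constraint regime by regime yields three canonical maximizers:
\begin{gather*}
\{1,\ldots,m+1\}\setminus\{1,m-l\}\quad\text{for}\ 0\leq l\leq m-2,\\
\{3,\ldots,m,m+2\}\quad\text{for}\ l=m-1,\\
\{2,\ldots,m+1\}\quad\text{for}\ l=m,
\end{gather*}
whose products $(m+1)!/(m-l)$, $(m+2)m!/2$, $(m+1)!$ match the statement.

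\textbf{Phase 3: Comparison.} Finally I would verify that in each regime of $l$ the displayed candidate dominates every other admissible partition in the family from Phase 2. Since the objective is a rational function of $(p,q,j)$ subject to a linear sum constraint, this reduces to a short monotonicity argument on ratios of factorials; extremality of the displayed choice is forced by the monotonicity of $q!/((p-1)!\,j)$ as one increases $q$ and simultaneously shifts $(p,j)$ to respect the sum.

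The main obstacle is Phase 1(iii): the swap $\{a,b\}\mapsto\{a+1,b-1\}$ degenerates precisely when $b=a+2$, which is why the optimum may carry one isolated \emph{interior} hole. Pinning down this single permitted exception is the delicate point of the structural analysis; it is this exception that forces the separate case $l=m-1$ in the statement, whose maximizer $\{3,\ldots,m,m+2\}$ sits at the extreme admissible configuration (interval $[3,m+2]$ with the hole at $m+1$), while the principal regime $0\leq l\leq m-2$ realizes the hole inside the left end of the interval $[2,m+1]$.
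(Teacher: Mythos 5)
The paper does not prove this lemma at all: it is imported verbatim from \cite{part} (the reference attached to OEIS A034893), so there is no in-paper argument to compare yours against, and any proof you give is necessarily ``a different route.'' On its own merits, your outline is the standard and correct one. The three exchange lemmas of Phase~1 all check out: dropping a part equal to $1$ into the largest part, closing any gap of size $\geq 3$ via $\{a,b\}\mapsto\{a+1,b-1\}$, and merging two gaps of size $2$ (where your observation that $b-a\geq 4$ guarantees the swap is both legal and strictly improving) correctly reduce a maximizer to an interval $[p,q]$ with $p\geq 2$ and at most one interior hole; singletons are covered as degenerate intervals, so the small cases $n=2,3,4$ where $\{n\}$ wins are not exceptions. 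The Phase~2 candidates are the right ones and their sums and products match the three displayed formulas (including the degenerate reading of $\{3,\dots,m,m+2\}$ as $\{4\}$ when $m=2$). The one place where work genuinely remains is Phase~3: the comparison is not a single monotonicity statement in $(p,q,j)$, because the admissible configurations range over different lengths $k\leq m$, and for each $l$ you must beat not only the other holed intervals (e.g.\ $[1,m+1]\setminus\{m+1-l\}$ with product $(m+1)!/(m+1-l)$, which loses to your $(m+1)!/(m-l)$) but also the unholed intervals of every shorter length. This is a finite and elementary case analysis once Phase~1 is in place, but as written it is asserted rather than carried out, so your text is a sound proof plan rather than a complete proof. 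Your closing remark correctly identifies why the case $l=m-1$ must be split off: it is exactly the parameter value at which the hole would have to sit at the left endpoint $m-l=1$, forcing the optimum to migrate to the configuration $[3,m+2]\setminus\{m+1\}$ instead.
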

This leads to the following conjecture:
\begin{conj}\label{conj1} The class $d$ of $S$ is bounded above by $a_n$ and the bound is minimal.
\end{conj}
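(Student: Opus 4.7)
The plan is induction on $n$, combining Conjecture \ref{conj2} (if $S$ is indecomposable then $d\leq n$) with a careful analysis of the class under $\mathcal{G}_S$-invariant decompositions. If $S$ is indecomposable, then $d\leq n\leq a_n$ and we are done. Otherwise, write $S=X\sqcup Y$ with both $\mathcal{G}_S$-invariant, of sizes $k$ and $n-k$. The monomial representation of $G_S$ block-decomposes along $X\sqcup Y$: for $s\in X$ the $Y$-block of $\Theta(s)$ has no $q$-factors (it is a pure permutation matrix), and symmetrically for $s\in Y$.

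From this block decomposition one reads that $d_S$ is controlled by four quantities: the class $d_X$ of $X$ (governing the $X$-block for atoms in $X$), the class $d_Y$ of $Y$, and the orders of the ``cross-action'' permutations $\psi(s^{[d_Y]})|_X$ for $s\in Y$ and $\psi(s^{[d_X]})|_Y$ for $s\in X$. Each cross-action has order dividing $|X|$ or $|Y|$, and combining these with $d_X,d_Y$ one aims at the crucial divisibility $d_S\mid d_X\cdot d_Y$. This is the product-multiplicativity (not merely lcm) that separates the conjectural bound $a_n$ from the weaker Landau-function bound $g(n)$. Iterating the decomposition down to indecomposable components yields a partition $n=n_1+\dots+n_k$ with $d_S\leq \prod n_i$. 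A combinatorial optimization, essentially the one underlying the lemma from \cite{part}, then shows this product is maximized by partitions into \emph{distinct} parts, giving $d_S\leq a_n$.

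For minimality (achievement of $a_n$), take a distinct-part partition $n=n_1+\dots+n_k$ realizing $a_n$, and on each part build the cyclic cycle set with $\psi(s)=(1\,2\,\dots\,n_i)$, whose class is exactly $n_i$ as shown in the example preceding the conjecture. Assemble them using the Zappa--Szép product of Section 4, which is designed so that the classes of the factors multiply rather than merely take their lcm.

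The main obstacle is precisely the divisibility $d_S\mid d_X\cdot d_Y$: the naive block-diagonal argument only produces $d_S\mid \operatorname{lcm}(d_X,d_Y)\cdot |\mathcal{G}_X|\cdot |\mathcal{G}_Y|$, which is too coarse, and tightening it to the product $d_X\cdot d_Y$ requires a careful interplay between the diagonal entries and the cross-action permutations in the monomial representation. I expect the ``specific case'' actually proven in the paper to live exactly where this interplay simplifies --- most plausibly the case in which the indecomposable components have pairwise coprime sizes (covering $n=3,4,5,7,8,10$ in the table), because there $\operatorname{lcm}$ equals the product automatically and the delicate multiplicativity step becomes trivial; the harder cases such as $n=6=2+4$ and $n=9=2+3+4$ genuinely need the full Zappa--Szép machinery to realize the jump from $\operatorname{lcm}$ to product.
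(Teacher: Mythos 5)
The statement you are trying to prove is a \emph{conjecture} in the paper: the author does not prove it in general, and only establishes the bound $d\leq a_n$ in the special case where $S$ is square-free and $\mathcal G$ is abelian (Proposition \ref{conj-proof}). That proof is quite different from your sketch: it shows $d$ divides the exponent of $\mathcal G$, which for an abelian group equals the maximal order of an element and is hence bounded by the Landau function $g(n)$, and then uses $g(n)\leq a_n$ (replacing repeated parts by $1$'s in an lcm-maximizing partition and bounding lcm by the product). So your guess that the ``specific case'' is the one with pairwise coprime indecomposable components is not what the paper does, and your overall strategy is not the paper's.

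More importantly, your argument has genuine gaps at every load-bearing point. First, it rests on Conjecture \ref{conj2} ($S$ indecomposable $\Rightarrow d\leq n$), which is itself an open conjecture in the paper, so the base case of your induction is unproven. Second, the crucial divisibility $d_S\mid d_X\cdot d_Y$ for a decomposition $S=X\sqcup Y$ is exactly the step you concede you cannot establish; the block structure only gives you control of the diagonal blocks by $d_X$ and $d_Y$ separately, while the cross-action permutations (the $Y$-part of $\psi(s^{[k]})$ for $s\in X$) are a priori only constrained by $|\mathcal G|$, and nothing in the paper supplies the multiplicativity you need. Third, the minimality half also fails as stated: the paper's Zappa--Sz\'ep construction requires the two classes to be \emph{coprime} (Definition \ref{intprod} and the surrounding discussion), so it cannot assemble cyclic components of classes $2$ and $4$ for $n=6$, and even when it applies the paper explicitly remarks that the resulting class is in general only a \emph{divisor} of the product, not equal to it. In short, what you have is a plausible programme for attacking the conjecture, not a proof, and it should be presented as such.
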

As a consequence we obtain the following result: 
\begin{prop}[\cite{brace}]\label{ddivG} The class $d$ divides the order of $\mathcal G$. In particular $d$ divides $n!$.
\end{prop}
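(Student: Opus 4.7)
The plan is to exploit the abelian group structure $(\mathcal{G}, +)$ just obtained: I will show its exponent is exactly $d$, at which point Lagrange forces $d \mid |\mathcal{G}|$, and the inclusion $\mathcal{G} \leq \Sym_n$ delivers $d \mid n!$.

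The key identification is that inside $(M, +)$ the $k$-fold sum of $s_i$ with itself coincides with $s_i^{[k]}$, since both are the unique element whose diagonal part is $D_{s_i}^k$. Pushing this forward by $\psi$ gives $k \cdot \psi(s_i) = \psi(s_i^{[k]})$ in $(\mathcal{G}, +)$. By the very definition of Dehornoy's class, $d$ is the smallest positive integer such that $s_i^{[d]}$ is diagonal (equivalently $\psi(s_i^{[d]}) = \mathrm{id}$) for every $i$; translating across the above identification, $d$ is the smallest positive integer $e$ with $e \cdot \psi(s_i) = 0$ in $(\mathcal{G}, +)$ for every $i$.

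Next, $(\mathcal{G}, +)$ is abelian and generated by the $\psi(s_i)$'s: the surjection $\psi \colon M \to \mathcal{G}$ together with Proposition \ref{pipres} shows every element of $M$, and hence of $\mathcal{G}$, is a sum of the $s_i$'s. So an integer annihilates all of $(\mathcal{G}, +)$ if and only if it annihilates each $\psi(s_i)$, which combined with the previous step identifies the exponent of the finite abelian group $(\mathcal{G}, +)$ with $d$. Since the exponent of any finite group divides its order (it is the lcm of element orders, each dividing $|\mathcal{G}|$ by Lagrange), we conclude $d \mid |\mathcal{G}|$ and therefore $d \mid n!$.

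The only delicate point is the translation $\psi(s_i^{[d]}) = d \cdot \psi(s_i)$, which relies on the well-definedness of $+$ on $\mathcal{G}$ and its compatibility with $\psi$; both are in hand from the preceding proposition. After that the argument is nothing more than Lagrange applied to a finite abelian group of known exponent.
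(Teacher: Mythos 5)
Your proof is correct and follows essentially the same route as the paper: both arguments push the identification of $s^{[k]}$ with the $k$-fold sum of $s$ in $(G,+)$ through the compatible map $\psi$ to $(\mathcal G,+)$ and then invoke Lagrange. The only cosmetic difference is that the paper works with the individual orders $d_s$ of the $\psi(s)$ and takes their lcm, whereas you package the same data as the exponent of the finite abelian group $(\mathcal G,+)$.
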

\begin{proof}
For $s\in S$, the set $\{ks\mid k\in\mathbb Z\}$ is a subgroup of $(G,+)$, and the smallest integer $d_s$ such that $d_ss$ is in the socle corresponds to the order of $\psi(s)$ in $(\mathcal G,+)$, which thus divides $|\mathcal G|$. Thus the lcm of the $d_s$ also divides the order of $\mathcal G$.

As $d$ is the lcm of all the $d_s,s\in S$, it also divides $|\mathcal G|$.
\end{proof}
The landau function $g\colon\mathbb N^*\to\mathbb N^*$ (\cite{landau}) is defined as the largest order of a permutation in $\mathfrak S_n$. 
\begin{prop}\label{conj-proof} If $S$ is square-free and $\mathcal G$ abelian then $d\leq a_n$
\end{prop}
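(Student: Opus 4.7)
The plan is to first reduce the class $d$ to a purely group-theoretic quantity attached to $\mathcal G$, and then to bound it via the orbit decomposition of an abelian subgroup of $\Sym_n$.

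The starting point is the square-free hypothesis: since $s*s=s$, the diagonal map $T$ of Proposition~\ref{o(T)} is the identity, so $s^{[k]}=sT(s)\cdots T^{k-1}(s)=s^k$ in $M$. Thus $s^{[k]}$ is diagonal if and only if $\psi(s)^k=1$, so the integer $d_s$ from the proof of Proposition~\ref{class} is exactly $o(\psi(s))$, giving $d=\text{lcm}\bigl(o(\psi(s_1)),\dots,o(\psi(s_n))\bigr)$. Because $\mathcal G$ is abelian and generated by the $\psi(s_i)$, every element of $\mathcal G$ is a product of powers of these generators, so its order divides this lcm; hence $d$ coincides with the exponent of $\mathcal G$.

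The core of the argument is to bound this exponent via orbit sizes. Decompose $\{1,\dots,n\}$ into the $\mathcal G$-orbits $O_1,\dots,O_k$ of sizes $n_1,\dots,n_k$. Since $\mathcal G$ is abelian, the stabilizers of distinct points of a given orbit are conjugate, hence equal; call the common stabilizer $H_i$. The quotient $\mathcal G/H_i$ then acts freely and transitively on $O_i$, so $[\mathcal G:H_i]=n_i$. Faithfulness of the action on $\{1,\dots,n\}$ gives $\bigcap_i H_i=\{1\}$, hence a diagonal embedding $\mathcal G\hookrightarrow\prod_i\mathcal G/H_i$. Therefore the exponent of $\mathcal G$ divides $\text{lcm}(n_1,\dots,n_k)$.

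To conclude, let $m_1<\cdots<m_l$ be the distinct values among the $n_i$, and set $n':=\sum_j m_j\le n$. Then $\text{lcm}(n_i)=\text{lcm}(m_j)\le\prod_j m_j$, and by the very definition of $a_{n'}$ one has $\prod_j m_j\le a_{n'}$. A short monotonicity check for $(a_n)$---obtained by enlarging the largest part of a distinct-part partition of $n-1$, which preserves distinctness and does not decrease the product---gives $a_{n'}\le a_n$, so $d\le a_n$. I do not expect a real obstacle; the only place where care is needed is the orbit-stabilizer step, which is precisely where abelianness is used to make all point stabilizers in a given orbit coincide and thereby yield the regular quotient action with $[\mathcal G:H_i]=n_i$.
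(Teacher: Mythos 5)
Your proof is correct, and its first and last steps match the paper's: square-freeness gives $s^{[k]}=s^k$, hence $d=\operatorname{lcm}\bigl(o(\psi(s_i))\bigr)$ divides (in fact equals) the exponent $e(\mathcal G)$, and at the end both arguments bound the lcm of a partition of $n$ by the product of its distinct parts and hence by $a_n$. Where you genuinely diverge is the central inequality $e(\mathcal G)\leq\operatorname{lcm}(\text{partition of }n)$. The paper gets it by citing that a finite abelian group contains an element whose order equals the exponent; that element lives in $\Sym_n$, so its order is at most the Landau function $g(n)$, which is the maximal lcm over partitions of $n$ coming from cycle types of a single permutation. You instead decompose $\{1,\dots,n\}$ into $\mathcal G$-orbits, use abelianness to identify all point stabilizers $H_i$ within an orbit, embed $\mathcal G$ into $\prod_i\mathcal G/H_i$ via faithfulness, and conclude that $e(\mathcal G)$ divides $\operatorname{lcm}(n_1,\dots,n_k)$ for the orbit sizes $n_i$. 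Your route avoids invoking the Landau function and the ``element of maximal order'' fact, at the cost of the orbit-stabilizer bookkeeping and a small monotonicity check $a_{n'}\leq a_n$; it also yields the slightly sharper statement that $d$ divides the lcm of the orbit sizes of $\mathcal G$, a structural quantity of the action rather than of a single well-chosen element. Both uses of abelianness are essential and play parallel roles (equality of conjugate stabilizers for you, attainment of the exponent for the paper).
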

That is, under these conditions the bound part of Conjecture \ref{conj1} holds.
\begin{proof} If $S$ is square-free, then for all $s\in S$ we have by definition $T(s)=s$, so for any $k\in\mathbb Z$, $ks=sT(s)\dots T^{k-1}(s)=s^k$ so $\{ks\mid k\in\mathbb Z\}$ is a subgroup of $(G,\cdot)$ and the smallest integer $d_s$ such that $d_ss$ is in the socle corresponds to the order of $\psi(s)$ in $(\mathcal G,\cdot)$, which thus divides $e(\mathcal G)$ the exponent of $\mathcal G$ (the lcm of the orders of every element). So $d$ will also divide $e(\mathcal G)$.

As $\mathcal G$ is abelian and finite, there exists an element with order equal to its exponent, so the exponent is bounded by the maximal order of an element, i.e. $d\mid e(\mathcal G)\leq g(n).$

By the decomposition in disjoint cycles, $g(n)$ is equal to the maximum of the lcm of partitions of $n$: $$g(n)=\max\left(\left\{\text{lcm}(n_1,\dots,n_k)\middle|k\in\mathbb N, 1\leq n_1\leq\dots\leq n_k, n_1+\dots+n_k=n\right\}\right)$$ 
Moreover, by properties of the lcm, if $1\leq n_i=n_j$, as $\text{lcm}(n_i,n_j)=n_i$, the max is unchanged by replacing $n_j$ by only $1$'s. And as the lcm of a set is bounded above by the product of the elements, we have $g(n)\leq a_n$. Thus $d\leq g(n)\leq a_n$.
\end{proof}
\begin{prop} The followings hold:
\begin{enumerate}[label=(\roman*)]
\item $\psi\colon G\rightarrow\mathcal G$ factorizes through the projection $G\to\overline G$
\item We have the following divisibilities: \begin{itemize}\item $o(T)\mid d$ \item $d\mid \#\mathcal G$\item $\#\mathcal G\mid d^n$  \end{itemize}
\end{enumerate}
where $o(T)$ is the order of the diagonal permutation $T$, $\#\mathcal G$ denotes its order $|\mathcal G|$ (to avoid confusion with $\mid$ for divisibility).
\end{prop}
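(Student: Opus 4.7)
The proposal breaks into two independent pieces, the first being essentially an observation and the second being an assembly of earlier results plus a counting argument.

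For (i), the plan is to show that the kernel of the projection $G\to\overline G$, namely $G^{[d]}$, is contained in the kernel of $\psi$. By definition of Dehornoy's class, each generator $s^{[d]}$ of $G^{[d]}$ is a \emph{diagonal} matrix, so $P_{s^{[d]}}=\mathrm{Id}$ and hence $\psi(s^{[d]})=\mathrm{id}$. Since the diagonal matrices form a subgroup of $\M_n$ on which $\psi$ is constantly trivial, the whole subgroup $G^{[d]}$ generated by the $s^{[d]}$ lies in $\ker\psi$. By the universal property of the quotient, $\psi$ then factors through $G\to\overline G$, which proves (i). This step carries no hidden difficulty.

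For (ii), I would treat the three divisibilities in order, invoking results already established. The relation $o(T)\mid d$ is exactly the content of Proposition \ref{o(T)}(ii), so there is nothing new to prove. Similarly $d\mid\#\mathcal G$ is Proposition \ref{ddivG}. The only genuinely new claim is $\#\mathcal G\mid d^n$. The plan is to combine (i) with the explicit description of $\overline G$: by (i), $\psi$ induces a surjective morphism $\bar\psi\colon\overline G\twoheadrightarrow\mathcal G$ (surjective because $\mathcal G=\psi(G)$ by definition). Hence by Lagrange $\#\mathcal G$ divides $|\overline G|$. It remains to identify $|\overline G|=d^n$, which is the content of the bijection $\overline\Pi\colon(\mathbb Z/d\mathbb Z)^S\xrightarrow{\sim}\overline G$ established earlier. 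Concluding, $\#\mathcal G\mid d^n$.

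The main (and only) conceptual step is the factorization in (i); the rest is bookkeeping with Lagrange's theorem and previously established cardinalities. There is no serious obstacle: once one notes that $G^{[d]}$ consists entirely of diagonal matrices (which is immediate from how it is generated), both (i) and the new inequality in (ii) follow from completely standard quotient and order arguments.
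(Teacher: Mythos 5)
Your proof is correct and follows essentially the same route as the paper: (i) is obtained by noting that the generators $s^{[d]}$ of $G^{[d]}$ are diagonal, hence lie in $\ker\psi$, and (ii) combines Propositions \ref{o(T)} and \ref{ddivG} with the induced surjection $\overline G\to\mathcal G$ and the bijection $\overline\Pi\colon(\mathbb Z/d\mathbb Z)^S\to\overline G$ to get $\#\mathcal G\mid d^n$. The only cosmetic remark is that $\psi$ is an antimorphism rather than a morphism (as the paper itself notes), but this changes nothing in the quotient and Lagrange arguments.
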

\begin{proof}
(i) follows from the definition of $d$ as $\psi(ds)=\text{id}$.

For (ii), the first divisibility is  Proposition \ref{o(T)}, the second is Proposition \ref{ddivG} and the third is (i).
\end{proof}
For a positive integer $k$, denote by $\pi(k)$ the set of divisors of $k$.
\begin{cor} \label{pp}
We have $\pi(d)=\pi(\#\mathcal G)$.

In particular, $d$ is a prime power iff $\#\mathcal G$ is a prime power.
\end{cor}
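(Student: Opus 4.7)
The plan is to deduce the corollary directly from the divisibility chain
$$o(T)\mid d,\qquad d\mid \#\mathcal G,\qquad \#\mathcal G\mid d^n$$
that was just established in the preceding proposition. Only the last two divisibilities are needed here.

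First, I would observe that $d\mid \#\mathcal G$ immediately yields the inclusion $\pi(d)\subseteq\pi(\#\mathcal G)$, since any prime dividing $d$ must also divide $\#\mathcal G$. Conversely, from $\#\mathcal G\mid d^n$, any prime dividing $\#\mathcal G$ must divide $d^n$, and hence must divide $d$ itself. This gives $\pi(\#\mathcal G)\subseteq\pi(d)$, and combining the two inclusions proves $\pi(d)=\pi(\#\mathcal G)$.

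For the "in particular" statement, I would simply note that $d$ is a prime power if and only if $\#\pi(d)=1$, and likewise for $\#\mathcal G$. Since the two sets of prime divisors coincide, one is a singleton exactly when the other is, and moreover it is the same prime in both cases. So the equivalence follows immediately.

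There is no real obstacle: the corollary is a clean formal consequence of the divisibility relations already proved, and the entire argument is just an elementary manipulation of prime-divisor sets. The only thing worth being careful about is that $\pi(d^n) = \pi(d)$, which holds because raising an integer to a positive power does not introduce new prime factors.
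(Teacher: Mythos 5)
Your proposal is correct and follows essentially the same route as the paper: one inclusion from $d\mid\#\mathcal G$, the reverse inclusion from $\#\mathcal G\mid d^n$ together with the fact that a prime dividing $d^n$ divides $d$. (Both you and the paper implicitly read $\pi(k)$ as the set of \emph{prime} divisors, which is the only reading under which the statement is true; with that understood, the argument is complete.)
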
 
This means that our later results, which will involve the condition "$d$ is a prime power" can also be restated for $\#\mathcal G$.
\begin{proof}
As $d$ divide $\#\mathcal G$ any divisor of $d$ is a divisor of $\#\mathcal G$. Conversely, if $p$ is a prime divisor of $\#\mathcal G$ then it divides $d^n$ and thus divides $d$.
\end{proof}
\begin{lem}\label{indN} If $S$ is indecomposable then $n$ divides $\#\mathcal G$.

In particular, $\pi(n)\subseteq \pi(\#\mathcal G)=\pi(d)$, and thus if $d$ is a prime power then $n$ is also a power of the same prime.
\end{lem}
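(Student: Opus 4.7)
The plan is to interpret indecomposability in terms of group actions and then apply orbit-stabilizer. The key observation is that a subset $X\subseteq S$ is $\mathcal{G}$-invariant in the sense of the definition (stable under every $\psi(s)$) if and only if it is stable under the group $\mathcal{G}$ itself, because the $\psi(s_i)$ generate $\mathcal{G}$ and invariance under generators implies invariance under all their inverses (orbits are finite, so iterating $\psi(s)$ eventually returns to the identity and recovers $\psi(s)^{-1}$).

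Given this, I would consider the natural action of $\mathcal{G}$ on $S$ and look at its orbits. Each orbit is a $\mathcal{G}$-invariant subset, and the partition of $S$ into orbits is a partition into $\mathcal{G}$-invariant subsets. If there were at least two distinct orbits, pick one orbit $X$ and let $Y=S\setminus X$ (which is a union of orbits, hence also $\mathcal{G}$-invariant); this would give a proper decomposition $S=X\sqcup Y$, contradicting indecomposability. Therefore $\mathcal{G}$ acts transitively on $S$, and the orbit-stabilizer theorem yields $\#\mathcal{G}=n\cdot|\mathrm{Stab}_{\mathcal{G}}(s)|$ for any $s\in S$, so $n\mid\#\mathcal{G}$.

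For the ``in particular'' clause, from $n\mid\#\mathcal{G}$ it follows that every divisor of $n$ is a divisor of $\#\mathcal{G}$, i.e.\ $\pi(n)\subseteq\pi(\#\mathcal{G})$, and by Corollary~\ref{pp} this equals $\pi(d)$. If $d$ is a prime power $p^k$, then $\pi(d)=\{1,p,p^2,\dots,p^k\}$ consists only of powers of $p$; since $n\in\pi(n)\subseteq\pi(d)$, $n$ must itself be a power of $p$.

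The proof is essentially immediate once one recognises the translation ``indecomposable $\Leftrightarrow$ $\mathcal{G}$ transitive on $S$''; there is no real obstacle. The only small point worth writing out carefully is the first step, namely that invariance under the set $\{\psi(s_i)\}$ upgrades to invariance under the group $\mathcal{G}$, so that the orbits are actually well-defined $\mathcal{G}$-invariant subsets in the sense of the paper's definition.
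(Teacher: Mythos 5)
Your proof is correct and follows essentially the same route as the paper: translate indecomposability into transitivity of the $\mathcal G$-action and apply orbit--stabilizer, then conclude via Corollary \ref{pp}. The only difference is that the paper simply cites \cite{etingof} for the equivalence ``indecomposable $\Leftrightarrow$ $\mathcal G$ transitive,'' whereas you supply the short argument (orbits as $\mathcal G$-invariant subsets, plus the observation that invariance under the generators upgrades to invariance under the group) yourself, which is a welcome but inessential addition.
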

\begin{proof}
By \cite{etingof} $S$ is indecomposable iff $\mathcal G$ acts transitively on $S$. By the orbit stabilizer theorem, for any $s$ in $S$ we have $\# \text{Orb}(x)=\frac{\#\mathcal G}{\#\text{Stab}(x)}$. So if $S$ is indecomposable there is a unique orbit of size $n$ so $n$ divides $\#\mathcal G$. The last statements are a direct consequence of this divisibility and the previous corollary.
\end{proof}
\begin{lem}
If $S$ is indecomposable and $\mathcal G$ is abelian, then $n=|\mathcal G|$
\end{lem}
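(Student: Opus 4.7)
The plan is to combine the transitivity observation already invoked in Lemma~\ref{indN} with the standard fact that a faithful transitive action of an abelian group is regular.

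First, recall from the proof of Lemma~\ref{indN} that indecomposability of $S$ is equivalent (via \cite{etingof}) to $\mathcal G$ acting transitively on $S=\{s_1,\dots,s_n\}$. Moreover, by construction $\mathcal G$ is defined as a subgroup of $\mathfrak S_n$ acting on $S$, so this action is automatically faithful: if $\sigma\in\mathcal G$ fixes every $s_i$, then $\sigma=\mathrm{id}$.

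Next, I would show that under the abelian hypothesis the stabilizers are trivial. Fix any $s\in S$ and suppose $\sigma\in\mathcal G$ satisfies $\sigma(s)=s$. For an arbitrary $t\in S$, transitivity gives some $\tau\in\mathcal G$ with $t=\tau(s)$, and then
$$\sigma(t)=\sigma\tau(s)=\tau\sigma(s)=\tau(s)=t,$$
using commutativity of $\mathcal G$ in the middle equality. Hence $\sigma$ fixes every element of $S$, so $\sigma=\mathrm{id}$ by faithfulness. Thus the stabilizer of $s$ in $\mathcal G$ is trivial.

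Finally, the orbit–stabilizer theorem gives
$$n=|\mathrm{Orb}(s)|=\frac{|\mathcal G|}{|\mathrm{Stab}(s)|}=|\mathcal G|,$$
which is the desired equality. There is essentially no obstacle here: the argument is the classical observation that a faithful transitive abelian action is regular, and everything needed (transitivity from indecomposability, faithfulness from $\mathcal G\subseteq\mathfrak S_n$) is already in place.
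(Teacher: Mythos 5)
Your proof is correct and is essentially the same argument as the paper's: both exploit transitivity (from indecomposability via \cite{etingof}) together with commutativity to show that an element of $\mathcal G$ is determined by its image of a single point, i.e.\ that the action is regular. The paper phrases this as injectivity of $\sigma\mapsto\sigma(x_0)$ plus a counting argument rather than trivial stabilizers plus orbit--stabilizer, but the content is identical.
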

\begin{proof}\footnote{https://math.stackexchange.com/a/1316138}
Again by \cite{etingof} S is indecomposable iff $\mathcal G$ acts transitively on $S$. Let $x_0\in S$, by transitivity for all $x\in S$, there exists $\sigma\in\mathcal G$ such that $x=\sigma(x_0)$. Let $\tau\in\mathcal G$ be such that we also have $x=\tau(x_0)$, we will show that $\tau=\sigma$. For all $y\in S$, there exists $\nu\in\mathcal G$ such that $y=\nu(x)$, thus $\sigma(y)=\sigma(\nu(x))=\sigma(\nu(\tau(x_0))=\tau(\nu(\sigma(x_0))=\tau(y)$. So an element of $\mathcal G$ is uniquely determined by its image of $x_0$, thus $|S|\geq |\mathcal G|$, and the other inequality follows by transitivity.
\end{proof}
Let $k\geq 1$ and $kG$ be the subgroup of $G$ generated by $kS=\{ks\mid s\in S\}$.
The following result was simultaneously introduced in \cite{lebed}:
\begin{prop}
\label{Sk}
For $\geq 1$, $kG$ induces a cycle set structure on $kS$.
\end{prop}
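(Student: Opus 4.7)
The plan is to apply Theorem \ref{condCS} to the subgroup $G^{[k]}$ of $\M_n$, taking $S^{[k]}$ as the candidate atom set. Denoting $(G^{[k]})^+ := G^{[k]} \cap \M_n^+$, I would verify that $S^{[k]} = \{s_1^{[k]}, \dots, s_n^{[k]}\}$ has cardinality $n$, is the atom set of $(G^{[k]})^+$, generates $G^{[k]}$, and that $D_{s_i^{[k]}} = D_i^k$. Since $G$ is permutation-free by Proposition \ref{istruct}, its subgroup $G^{[k]}$ is as well, which is condition (i) of that theorem. The equality $D_{s_i^{[k]}} = D_{s_i}^k = D_i^k$ is just the definition of $s_i^{[k]}$, and in particular shows the diagonal parts of the $s_i^{[k]}$ are pairwise distinct; since an element of $G$ is determined by its diagonal part, this yields $|S^{[k]}| = n$. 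Generation of $G^{[k]}$ by $S^{[k]}$ is definitional.

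The only substantive check is that $S^{[k]}$ is exactly the atom set of $(G^{[k]})^+$. Using Proposition \ref{cpLamb}, the cp-tuple of any product of $s_i^{[k]}$'s lies in $(k\mathbb{Z})^n$ (the twisted addition $\text{cp}(gh) = \text{cp}(g) + \pre{\psi(g)}{\text{cp}(h)}$ preserves this set, as conjugation by a permutation just permutes coordinates), so the length $\lambda$ of any element of $G^{[k]}$ is a multiple of $k$. If $s_i^{[k]} = g_1 g_2$ in $(G^{[k]})^+$ with $g_1, g_2 \neq 1$, we would have $\lambda(g_1) + \lambda(g_2) = k$ with each summand a strictly positive multiple of $k$, an impossibility; hence each $s_i^{[k]}$ is an atom. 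Conversely, any atom $a \in (G^{[k]})^+$ satisfies $\lambda(a) = k$, so its cp-tuple is $k$ times a standard basis vector, and Proposition \ref{cpPi} forces $a = s_i^{[k]}$ for some $i$.

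With all hypotheses verified, Theorem \ref{condCS} applied via (i) $\Rightarrow$ (iii) gives that $G^{[k]}$ is the structure group of the cycle set $S^{[k]}$ equipped with the operation $s_i^{[k]} * s_j^{[k]} := s^{[k]}_{\psi(s_i^{[k]})(j)}$, where we identify $S^{[k]}$ with $\{1, \dots, n\}$ through $s_i^{[k]} \leftrightarrow i$ (which is consistent with $D_{s_i^{[k]}} = D_i^k$). The only mildly delicate step is the atom verification above; everything else is bookkeeping inherited from the I-structure of $G$ and the fact that $G^{[k]}$ is a sublattice of $G$ with respect to the cp-tuple ordering restricted to multiples of $k$.
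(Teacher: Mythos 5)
Your proof is correct and rests on the same pillar as the paper's: Theorem \ref{condCS} applied to $G^{[k]}$ with atom set $S^{[k]}$ and $D_{s_i^{[k]}}=D_i^k$, with permutation-freeness inherited from $G$ (Proposition \ref{istruct}). The difference is which implication you invoke: you go through (i) $\Rightarrow$ (iii) and therefore spend your effort verifying the hypotheses of the theorem, in particular that $S^{[k]}$ really is the atom set of $(G^{[k]})^+$ via the $\lambda$-multiple-of-$k$ argument (a point the paper passes over in one sentence, so your care here is a genuine improvement); the paper instead verifies condition (ii) by the direct matrix computation $s^{[k]}t^{[k]}=D_s^kD_{\psi(s^{[k]})^{-1}(t)}^kP_{s^{[k]}}P_{t^{[k]}}$, which has the side benefit of producing the explicit formula $s^{[k]}\star t^{[k]}=\Omega_{k+1}(s,\dots,s,t)^{[k]}$ for the induced operation, used later in the paper. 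One small gap in your write-up: you assert without argument that any atom $a$ of $(G^{[k]})^+$ has $\lambda(a)=k$; this needs the observation that if $\lambda(a)\geq 2k$ then some coordinate of $\operatorname{cp}(a)$ is $\geq k$, so $s_i^{[k]}$ left-divides $a$ by Proposition \ref{div} and the cofactor $(s_i^{[k]})^{-1}a$ lies again in $(G^{[k]})^+$ and is nontrivial, contradicting atomicity. With that line added the argument is complete.
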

Explicitly, $\psi(ks)(kt)=k\lambda_{ks}^{-1}(t)$.
\begin{proof}
First note that $kG$ is a left-ideal of $G$, in particular a subbrace. Define $ks\star ts=\psi(ks)(t)=k\lambda_{ks}^{-1}(t)=\lambda_{ks}^{-1}(kt)$. We want to show that $(ks\star kt)\star(ks\star ku)=(kt\star ks)\star(kt\star ks)$.

We have $(ks\star kt)\star(ks\star ku)=\lambda_{ks}^{-1}(kt)\star \lambda_{ks}^{-1}(ku)=\lambda_{\lambda_{ks}^{-1}(kt)}^{-1}(\lambda_{ks}^{-1}(ku))$.

The conclusion then follows from Lemma \ref{ybe} $$(ks\star kt)\star(ks\star ku)=\lambda_{\lambda_{ks}^{-1}(kt)}^{-1}(\lambda_{ks}^{-1}(ku))=\lambda_{\lambda_{kt}^{-1}(ks)}^{-1}(\lambda_{kt}^{-1}(ku))=(kt\star ks)\star(kt\star ku).$$
\end{proof}
\begin{prop}
Let $k$ be a positive integer smaller than $d$, then $(kS,\star)$ is of class $\frac{d}{\text{gcd}(d,k)}$.

Moreover, $((d+1)S,\star)$ is the same, as a cycle set, as $(S,*)$.
\end{prop}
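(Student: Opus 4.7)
The strategy is to translate both claims into statements in the monomial representation, where $(S^{[k]},\star)$ has structure group $G^{[k]}$ sitting inside the permutation-free group $G$. The pivotal step is to identify the element $(s^{[k]})^{[r]}$ computed inside the new cycle set with $s^{[kr]}$ viewed inside $G$. Applying Proposition \ref{pimat} to $(S^{[k]},\star)$ gives $D_{(s^{[k]})^{[r]}} = (D_{s^{[k]}})^{r} = D_s^{kr} = D_{s^{[kr]}}$; since $G^{[k]}\subseteq G$ inherits the property of being permutation-free, elements are uniquely determined by their diagonal part, forcing the identification $(s^{[k]})^{[r]}=s^{[kr]}$ in $G$.

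Once this is in place, the first claim reduces to an arithmetic computation: the class of $(S^{[k]},\star)$ is the smallest $r\geq 1$ such that $s^{[kr]}$ is diagonal for every $s\in S$, equivalently such that $d\mid kr$. Writing $g=\gcd(d,k)$, $d=gd'$, $k=gk'$ with $\gcd(d',k')=1$, the condition becomes $d'\mid r$, so the minimal $r$ is exactly $d/\gcd(d,k)$.

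For the second claim I would compute the $\star$-operation directly on the generators $s^{[d+1]}$. The defining formula from Proposition \ref{Sk} gives $s^{[d+1]}\star t^{[d+1]} = \Omega_{d+2}(s,\dots,s,t)^{[d+1]}$, and by Proposition \ref{pimat} the inner expression equals $\psi(s^{[d+1]})(t)$. Using $s^{[d+1]}=s^{[d]}\cdot s$ in $G$ with $s^{[d]}$ diagonal (by definition of the class), one gets $\psi(s^{[d+1]})=\psi(s)$, hence $s^{[d+1]}\star t^{[d+1]} = (s*t)^{[d+1]}$. The map $\phi\colon s\mapsto s^{[d+1]}$ is then a bijection $S\to S^{[d+1]}$ (since distinct generators give distinct diagonal parts $D_{s_i}^{d+1}$) intertwining the two cycle set operations, yielding the desired isomorphism.

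The only subtle point I foresee is the uniqueness step $(s^{[k]})^{[r]}=s^{[kr]}$, which quietly relies on the fact that $G^{[k]}$ is permutation-free as a subgroup of $G$ and hence admits the same "determined by diagonal part" characterization as $G$ itself; everything else is a short, routine manipulation using results already established.
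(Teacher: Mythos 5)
Your proposal is correct and follows essentially the same route as the paper: both arguments hinge on the identity $(s^{[k]})^{[r]}=s^{[kr]}$ (which you justify more carefully via diagonal parts and permutation-freeness, while the paper simply recalls it) followed by the same arithmetic $\min\{r : d\mid kr\}=d/\gcd(d,k)$, and both reduce the second claim to $\psi(s^{[d+1]})=\psi(s)$. Your derivation of that last equality from $s^{[d+1]}=s^{[d]}\cdot s$ with $s^{[d]}$ diagonal is a slightly more explicit version of the paper's appeal to the triviality of $(S^{[d]},\star)$, but it is the same idea.
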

This means that this construction provides, at most, $d$ different cycle sets.
\begin{proof} Recall that $j(ks)=(jk)s$. Thus $a(ks)$ is in the socle when $ak$ is a multiple of $d$, so we deduce that $kS$ is of class $\frac{\text{lcm}(d,k)}{k}=\frac{d}{\text{gcd}(d,k)}$.

By definition of $d$, we have that $(dS,\star)$ is the trivial cycle set ($\psi(s)=\text{id}$), thus $\psi((d+1)S)=\psi(s)$.
\end{proof}
\section{Sylow subgroups and decomposition}
Recall that for $k>1$, $\M_n^k$ denotes the group of monomial matrices with non-zero coefficients powers of $\zeta_k$, and $\iota_k^{kl}$ is the embedding $\M_n^k\hookrightarrow\M_n^{kl}$ sending $\zeta_k$ to $\zeta_{kl}^l$. Given two subgroups $H,K<G$, their internal product subset is defined by $HK=\{hk\mid h\in H,k\in K\}$. If $H$ and $K$ have trivial intersection and $HK=KH$, the set product $HK$ has a natural group structure called the Zappa--Szép product of $H$ and $K$. We apply this to the Sylow-subgroups of the germs to obtain that any finite cycle set can be constructed from the Zappa--Szép product of the germs of cycle sets of class a prime power.
\begin{defi}\label{intprod}
Let $k,l$ be integers such that $k,l>1$. Let $m$ be a common multiple of $k$ and $l$, with $m=ka=lb$ for some $a,b\geq 1$. Given two subgroups $G<\M_n^k$, $H<\M_n^l$ by $G\bowtie_m H$ we denote the subset $\iota_k^m(G)\iota_l^m(H)$ of $\M_n^m$.

Identifying $G$ and $H$ with their image in $\M_n^m$, we say that they commute (\cite{group}) if $GH=HG$ as sets, i.e. for any $(g,h)$ in $G\times H$, there exists a unique $(g',h')$ in $(G\times H)$ such that $gh=h'g'$.
\end{defi}
\begin{rmk}
This operation can be thought of as taking elements of $G$ and $H$, changing appropriately the roots of unity (with $\zeta_k=\zeta_m^a$ and $\zeta_l=\zeta_m^b$) and taking every  product of such elements (we embed $G$ and $H$ in $\M_n^m$ and take their product as subsets).

When $k$ and $l$ are coprime, $G$ and $H$ can be seen as subgroups of $\M_n^m$ with trivial intersection, and so if they commute we have that $G\bowtie_m H$ is a group called the Zappa--Szép product of $G$ and $H$ (\cite{group}, Product Theorem).
\end{rmk}
Let $(S,*_1),(S,*_2)$ be two cycle sets, over the same set $S$, of coprime respective classes $d_1,d_2$ and germs $\overline G_1,\overline G_2$. Let $d=d_1d_2$ and $\overline G=\overline G_1\bowtie_d \overline G_2$ (which, in general, is only a subset of $\M_n^d$), and we identify each $\overline G_i$ with its image in $\overline G$.
\begin{defi}
$S_1$ and $S_2$ are said to be $\bowtie$-compatible if $\overline G$ is a germ of the structure group of some cycle set which we'll denote $S_1\bowtie S_2$.
\end{defi}
We now construct a candidate $S_1\bowtie_d S_2$ for which $\overline G$ could be the germ. This candidate is not, in general a cycle set, but if it is, its class is a divisor of $d$. Then we will state the condition for it to be a cycle set.

For clarity, we will put a subscript to distinguish between the respective structures of $S_1$ and $S_2$: $\psi_1(s)$ will denote the permutation given by $*_1$.

\begin{algorithm}[H]
\caption{Constructing $S_1\bowtie_d S_2$}
\label{algdia}
\textbf{Input:} A set $S$ with two cycle sets structure $*_1,*_2$ on $S$ of coprime classes $d_1,d_2$\hfill \mbox{} \\
\textbf{Output:} A couple $(S,*)$ with $*$ a binary operation \hfill\mbox{}
\begin{algorithmic}[1]
\State Compute $(u,v)$ the solution to Bézout's identity $d_2u+d_1v=1[d]$
\For{$i=1$ to $n$}
\State Compute $g=us_i\in\overline G_1$
\State Let $\sigma=\psi_1(us_i)$
\State Compute $h=vs_{\sigma(i)}=v\lambda_{g}^{-1}(s_i)\in\overline G_2$
\State Let $\psi(s_i)$ be the permutation of $\iota_{d_1}^d(g_1)\iota_{d_2}^d(g_2)$
\EndFor
\State \Return $S_1\bowtie_d S_2=(S,*)$ with $s_i*s_j=s_{\psi(s_i)(j)}$.
\end{algorithmic}
\end{algorithm}
\begin{rmk}
The heart of the algorithm is line $5$ which relies on $ks\cdot kt=ks+k\lambda_{ks}(t)$.

To obtain an element with diagonal part $D_{s_i}$, we have to take $t=s_{\sigma(i)}=\lambda_{ks}^{-1}(s_i)$ with here $\sigma=\psi(ks_i)$ and as we apply $\iota^d$ on the elements (in $S_1$ this does $q\mapsto q^{d_2}$ and in $S_2$ $q\mapsto q^{d_1}$), we obtain $D_{s_i}=D_i^{d_2u+d_1v}=D_i$ from lign 1.
\end{rmk}
\begin{ex}
Take two cycle sets of size $n=5$ and class respectively $2$ and $3$, and apply Algorithm \ref{algdia} providing a candidate for a cycle set of class $6$:

Let $S_1=\{s_1',\dots,s_5'\}$ and $S_2=\{s_1'',\dots,s_5''\}$, with $(S_1,\psi_1),(S_2,\psi_2)$ given by: 
\begin{align*}
\psi_1(s_1')=\psi_1(s_3')=(1234)&&\psi_1(s_2')=\psi_1(s_4')=(1432)&&\psi(s_5')=\text{id}\\
\psi_2(s_1'')=\psi_2(s_2'')=(354)&&\psi_2(s_3'')=\psi_2(s_4'')=\psi_2(s_5'')=(345)&&
\end{align*}
Where $S_1$ is of class $d_1=2$ and $S_2$ of class $d_2=3$.

Consider their respective germs $\overline G_1$ and $\overline G_2$ of order $2^5$ and $3^5$. Then define $\overline G=\overline G_1\bowtie_6\overline G_2$ over the basis $S=\{s_1,\dots,s_5\}$. For instance: $$\iota_2^6(s_1')=\iota_ 2^6\left(\begin{pmatrix}
0&\zeta_2&0&0&0\\0&0&1&0&0\\0&0&0&1&0\\1&0&0&0&0\\0&0&0&0&1
\end{pmatrix}\right)=\begin{pmatrix}
0&\zeta_6^3&0&0&0\\0&0&1&0&0\\0&0&0&1&0\\1&0&0&0&0\\0&0&0&0&1
\end{pmatrix}$$
$$
\iota_3^6(s_1'')=\iota_ 3^6\left(\begin{pmatrix}
1&0&0&0&0\\0&1&0&0&0\\0&0&0&0&\zeta_3\\0&0&1&0&0\\0&0&0&1&0\end{pmatrix}\right)=\begin{pmatrix}
1&0&0&0&0\\0&1&0&0&0\\0&0&0&0&\zeta_6^2\\0&0&1&0&0\\0&0&0&1&0\end{pmatrix}$$
To construct an element $g\in\overline G$ with $D_g=D_{s_3}$ we first solve Bézout's identity modulo 6: $3u+2v=1[6]$, a solution is given by $u=1$ and $v=2$, so we will multiply some $\iota_2^6(s_i')$ and $\iota_2^6(2s_j'')$ so that their product has diagonal part $D_{s_3}^1(D_{s_3}^2)^2=D_{s_3} \text{mod} q^6$. Recall that: $$ks\cdot kt=ks+k\lambda_{ks}(t).$$
Here we want $s=\lambda_{ks}(t)=s_3$, $k=3u$ and $l=2v$, so we take $s=3$. As $\sigma=\psi(1s_3')=\psi(s_3')=(1234)$, we have $t=s_{\sigma(3)}''=s_4''$, and note that $2s_4''=s_4''s_5''$. Finally: 
\begin{align*}\iota_2^6(s_3')\iota_3^6(2s_4'')&=\begin{pmatrix}
0&1&0&0&0\\0&0&1&0&0\\0&0&0&\zeta_6^{3\cdot 1}&0\\1&0&0&0&0\\0&0&0&0&1
\end{pmatrix}
\begin{pmatrix}
1&0&0&0&0\\0&1&0&0&0\\0&0&0&0&1\\0&0&\zeta_6^{2\cdot 2}&0&0\\0&0&0&1&0
\end{pmatrix}\\&
=\begin{pmatrix}
0&1&0&0&0\\0&0&0&0&1\\0&0&\zeta_6^{3+4}&0&0\\1&0&0&0&0\\0&0&0&1&0
\end{pmatrix}
=\begin{pmatrix}
0&1&0&0&0\\0&0&0&0&1\\0&0&\zeta_6&0&0\\1&0&0&0&0\\0&0&0&1&0
\end{pmatrix}\end{align*}
This will be our candidate for $s_3$. Doing this for all the generators we find: 
$$\psi(s_1)=(124)(35),\psi(s_2)=(1532),\psi(s_3)=(1254),\psi(s_4)=(132)(45),\psi(s_5)=(354).$$
Unfortunately, this isn't a cycle set: $(s_1*s_2)*(s_1*s_1)=s_4*s_2=s_1$ whereas $(s_2*s_1)*(s_2*s_1)=s_5*s_5=s_4$. This also means that $\overline G$ is not a brace, as for instance $s_1+s_2\neq s_2+s_1$.
\end{ex}
\begin{prop}
If $\overline G_1$ and $\overline G_2$ commute, then $S_1$ and $S_2$ are $\bowtie$-compatible. 
\end{prop}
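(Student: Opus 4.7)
The plan is to apply the germ version of Theorem~\ref{condCS} (its $\overline{\M_n}$-corollary) to $\overline G$. First, the commuting hypothesis ensures that $\overline G=\overline G_1\bowtie_d\overline G_2$ is closed under products and inverses, while the intersection $\iota_{d_1}^d(\overline G_1)\cap\iota_{d_2}^d(\overline G_2)$ is trivial: any common element would have diagonal entries lying simultaneously in $\langle\zeta_d^{d_2}\rangle$ and $\langle\zeta_d^{d_1}\rangle$, subgroups of $\mu_d$ that meet trivially by coprimality of $d_1$ and $d_2$; hence such an element is a permutation matrix, forced to be the identity by Proposition~\ref{germistruct} applied in either $\overline G_i$. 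Therefore $\overline G$ is a subgroup of $\M_n^d$ with $|\overline G|=|\overline G_1|\,|\overline G_2|=d_1^nd_2^n=d^n$, and every $g\in\overline G$ has a unique decomposition $g=g_1g_2$ with $g_i\in\overline G_i$.

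Second, the candidate atoms $\bar s_1,\dots,\bar s_n$ produced by Algorithm~\ref{algdia} satisfy $D_{\bar s_i}=D_i$, as already justified in the remark after the algorithm: the Bézout choice $d_2u+d_1v\equiv1\pmod d$ combined with the identity $D_i^kP_\sigma D_j^lP_\tau=D_i^kD_{\sigma^{-1}(j)}^lP_{\tau\sigma}$ applied with $j=\sigma(i)$ produces exactly $\zeta_d$ on the $i$-th diagonal. So $\overline G$ contains $n$ elements satisfying the diagonal hypothesis $D_{s_i}=D_i^k$ of Theorem~\ref{condCS} with $k=1$.

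The central step is to prove that $\overline G$ is permutation-free. Let $g\in\overline G$ satisfy $P_g=\text{Id}$ and write $g=g_1g_2$; Corollary~\ref{monprod} gives $D_{g_1}\pre{g_1}{D_{g_2}}=I$. Writing the $i$-th diagonal entry of $D_{g_1}$ as $\zeta_d^{d_2a_i}$ with $a_i\in\{0,\dots,d_1-1\}$ and that of $D_{g_2}$ as $\zeta_d^{d_1b_i}$ with $b_i\in\{0,\dots,d_2-1\}$, the row-$i$ equation reads $d_2a_i+d_1b_{\psi(g_1)(i)}\equiv0\pmod d$; reducing modulo $d_1$ and using $\gcd(d_1,d_2)=1$ forces $a_i=0$ for all $i$, hence $D_{g_1}=I$. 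Thus $g_1\in\overline G_1$ is a permutation matrix and Proposition~\ref{germistruct} inside $\overline G_1$ yields $g_1=1$; symmetrically $g_2=1$, so $g=1$. This is the main obstacle, and it is precisely where the coprimality of $d_1$ and $d_2$ is essential.

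Once permutation-freeness is established, the same argument as in the corollary to Proposition~\ref{istruct} shows that each element of $\overline G$ is determined by its diagonal, so the subgroup generated by the $\bar s_i$ (whose diagonals $D_i$ generate all of $(\mu_d)^n$) hits every one of the $d^n$ possible diagonals and therefore equals $\overline G$. All hypotheses of the germ form of Theorem~\ref{condCS} being met, $\overline G$ is the germ of the cycle set $(S,*)$ defined by $\bar s_i*\bar s_j=\bar s_{\psi(\bar s_i)(j)}$, which is exactly the cycle set $S_1\bowtie S_2$ produced by Algorithm~\ref{algdia}. This proves $\bowtie$-compatibility.
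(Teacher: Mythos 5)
Your proof is correct and follows essentially the same route as the paper: trivial intersection by coprimality, the commuting hypothesis making $\overline G$ a group, permutation-freeness from the fact that coefficient-powers of $\iota_{d_1}^d(\overline G_1)$ and $\iota_{d_2}^d(\overline G_2)$ live in subgroups of $\mathbb Z/d\mathbb Z$ meeting trivially, and Algorithm \ref{algdia} supplying the $n$ generators with $D_{\bar s_i}=D_i$ needed to invoke the germ version of Theorem \ref{condCS}; your congruence computation and the counting argument for generation just make explicit what the paper leaves as one-line assertions. One slip to fix: in the permutation-freeness step you should start from $D_g=\mathrm{Id}$ (i.e.\ $g$ \emph{is} a permutation matrix), not $P_g=\mathrm{Id}$ --- the identity $D_{g_1}\bigl(\pre{g_1}{D_{g_2}}\bigr)=\mathrm{Id}$ that you then use, and the conclusion you draw, both correspond to that hypothesis.
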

In this case, $\overline G=\overline G_1\bowtie_d \overline G_2$ is the Zappa--Szép product of $\overline G_1$ and $\overline G_2$.
\begin{proof}
As $d_1$ and $d_2$ are coprime, it is clear that $\overline G_1\cap\overline G_2=\{1\}$. 

By (\cite{group}, Product Theorem), $\overline G$ is a subgroup of $\M_n^k$ if and only if $\overline G_1$ and $\overline G_2$ commute, i.e. $\overline G=\overline G_1\bowtie_d \overline G_2=\overline G_2\bowtie_d \overline G_1$. As $\overline G_1$ and $\overline G_2$ have different (non-zero) coefficient-powers, a product $g_1g_2$ of two non-trivial elements from $\iota_{d_1}^d(\overline G_1)$ and $\iota_{d_2}^d(\overline G_2)$ cannot be a permutation matrix.

With Algorithm \ref{algdia}, we can construct the generators for $G<\Sigma_n$ such that $G/\langle ds\rangle_{s\in S}=\overline G$ (i.e $G$ is obtained by forgetting $q^d=1$ in $\overline G$). Then $G$ satisfies condition (i) of Theorem \ref{condCS}, this finishes proof.
\end{proof}
\begin{rmk}
To check whether $\overline G_1$ and $\overline G_2$ commute, we can restrict to the generators and check that: $$\forall s\in S_1, t\in S_2,\exists s'\in S_1,t'\in S_2\text{ such that } st=t's'.$$
\end{rmk}
\begin{prop}
If $S_1$ and $S_2$ satisfy the following "mixed" cycle set equation 
\begin{equation}\label{zsc}
\forall s,t,u\in S, (s*_1t)*_2(s*_1u)=(t*_2s)*_2(t*_2u)
\end{equation}
then $S_1$ and $S_2$ are $\bowtie-$compatible and $(S=S_1\bowtie_d S_2,*)$ is a cycle set.
\end{prop}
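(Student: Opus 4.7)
The plan is to deduce this from the immediately preceding proposition: it suffices to show that the mixed cycle set equation~(\ref{zsc}) forces $\overline G_1$ and $\overline G_2$ to commute as subgroups of $\M_n^d$. By the remark following that proposition, commutation need only be checked on generators: for every $s,t\in S$ (with $s$ viewed as a generator of $\iota_{d_1}^d(\overline G_1)$ and $t$ viewed as a generator of $\iota_{d_2}^d(\overline G_2)$) we must produce $s'\in \iota_{d_1}^d(\overline G_1)$ and $t'\in\iota_{d_2}^d(\overline G_2)$ with $st=t's'$.

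First I would expand both products using the basic identities $P_\sigma D=\pre{\sigma}{D}P_\sigma$ and $P_\sigma P_\tau=P_{\tau\sigma}$ of Section~1. Writing elements of the two germs in monomial form after embedding in $\M_n^d$, one finds
$$st \;=\; D_s^{d_2}\,D_{\psi_1(s)^{-1}(t)}^{d_1}\,P_{\psi_2(t)\circ\psi_1(s)}, \qquad t's'\;=\;D_{t'}^{d_1}\,D_{\psi_2(t')^{-1}(s')}^{d_2}\,P_{\psi_1(s')\circ\psi_2(t')}.$$
Comparing diagonal parts and using that the $d_1$-th and $d_2$-th roots of unity occupy disjoint coordinate positions by coprimality of $d_1,d_2$ (together with uniqueness of the monomial decomposition), the equality $st=t's'$ forces the candidates
$$t'\;:=\;\psi_1(s)^{-1}(t)\quad(\text{i.e.\ }s*_1 t'=t),\qquad s'\;:=\;\psi_2(t')(s)\;=\;t'*_2 s.$$
These are exactly the values produced by Algorithm~\ref{algdia}, so commutation reduces to the single remaining condition on the permutation parts.

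It then remains to verify the permutation identity $\psi_2(t)\circ\psi_1(s)=\psi_1(s')\circ\psi_2(t')$. Evaluating both sides on an arbitrary $u\in S$ turns this into the pointwise assertion
$$t*_2(s*_1 u)\;=\;s'*_1(t'*_2 u).$$
Substituting $t=s*_1 t'$ and $s'=t'*_2 s$, the identity to be proved becomes precisely an instance of the mixed cycle set equation~(\ref{zsc}) applied to the triple $(s,t',u)$. The main (really, the only) obstacle is this translation: one must track carefully which operation appears on the outside on each side of~(\ref{zsc}) so that, after the substitutions above, the equation matches the identity derived from the monomial calculus; this is where the hypothesis is tight and is the reason the algorithm of the previous example failed when~(\ref{zsc}) was violated.

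Once this pointwise identity is established, $\overline G_1$ and $\overline G_2$ commute as subsets of $\M_n^d$, so by the previous proposition $S_1$ and $S_2$ are $\bowtie$-compatible and $\overline G=\overline G_1\bowtie_d\overline G_2$ is a group. Since Algorithm~\ref{algdia} already verifies hypothesis~(ii) of Theorem~\ref{condCS} for the generating set $\{s_1,\dots,s_n\}$ with $D_{s_i}=D_i$ in $\M_n^d$, and permutation-freeness (hypothesis~(i)) follows from the product-form argument of the previous proposition, Theorem~\ref{condCS} concludes that $(S=S_1\bowtie_d S_2,*)$ is a cycle set with structure germ $\overline G$.
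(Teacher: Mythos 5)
Your proposal is correct and follows essentially the same route as the paper: reduce $\bowtie$-compatibility to commutation of the germs on generators, split $st=t's'$ into diagonal and permutation parts via the monomial calculus, read off the forced candidates $t'=\psi_1(s)^{-1}(t)$ and $s'=t'*_2 s$ from the diagonal parts, and observe that the remaining permutation identity, evaluated pointwise, is exactly the mixed cycle set equation. The bookkeeping issue you flag about which operation sits on the outside of each side of (\ref{zsc}) is present in the paper's own derivation as well, so your treatment matches the source in both substance and level of detail.
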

Explicitly, from Algorithm \ref{algdia}, we have $\psi(s_i)=\psi_2\left(vs_{\psi_1(us_i')(i)}''\right)\circ\psi_1\left(us_i'\right)$ with $u,v$ such that $d_2u+d_1v=1[d_1d_2]$.
\begin{proof}
Here we will work over monomial matrices as we do not yet have a Brace structure on $\overline G$.

We will use the previous proposition and show how Equation \ref{zsc} naturally arises from considering the commutativity of the germs. For clarity, although our two cycle sets have the same underlying set $S=\{s_1,\dots,s_n\}$, we will distinguish where we see those elements by writing $s'$ for $(S,*_1)$ and $s''$ for $(S,*_2)$.

Let $s_i'\in S_1,s_j''\in S_2$, then in $\overline G$: $$s_i's_j''=D_i^{d_2}P_{s_i'}D_j^{d_1}P_{s_j''}=D_i^{d_2}D_{\psi_1(s_i')^{-1}(j)}^{d_1}P_{s_i'}P_{s_j''}.$$ We want some $s_k'\in S_1,s_l''\in S_2$ such that $s_i's_j''=s_l''s_k'$, i.e: $$D_i^{d_2}D_{\psi_1(s_i')^{-1}(j)}^{d_1}P_{s_i'}P_{s_j''}=D_l^{d_1}D_{\psi_2(s_l'')^{-1}(k)}^{d_2}P_{s_l''}P_{s_k'}.$$
As $d_1$ and $d_2$ are coprime, they're in particular different, so we must have: $$\begin{cases}
D_i^{d_2}=D_{\psi_2(s_l'')^{-1}(k)}^{d_2}\\D_{\psi_1(s_i')^{-1}(j)}^{d_1}=D_l^{d_1}\\P_{s_i'}P_{s_j''}=P_{s_l''}P_{s_k'}.
\end{cases}$$
From which we first deduce: $k=\psi_2(s_l'')(i)$ and $j=\psi_1(s_i')(l)$, or equivalently $s_k=s_l*_2 s_i$ and $s_j=s_i*_1 s_l$. So taking this $k$ and $l$ we get $D_{s_i's_j''}=D_{s_l''s_k'}$. We are left with last of the three conditions, which then becomes: $$P_{s_i'}P_{s_i'*_1 s_l''}=P_{s_l''}P_{s_l''*_2 s_i'}.$$ As $P_\sigma P_\tau=P_{\tau\sigma}$, the last condition is equivalent to $$\psi_2(s_i'*_1s_l'')\circ\psi_1(s_i')=\psi_1(s_l''*_2s_i')\circ\psi_2(s_l'').$$ As $s_l''\in S_2$, $\psi_2(s_i'*_1s_l'')$ is seen as the action of an element of $S_2$, so all this becomes equivalent to: $$\forall s,t,u\in S, (s*_1t)*_2(s*_1u)=(t*_2s)*_2(t*_2u).$$
\end{proof}
\begin{rmk} The condition that the classes are coprime is used, with Bézout's identity, to have generators of the group $\overline G$ (elements with diagonal part $D_i$). Otherwise, say for instance that the classes are powers of the same prime, $d_1=p^a$ and $d_2=p^b$ with $b\leq a$. Then $\iota_{d_2}^d$ is the identity and $\iota_{d_1}^d$ will add elements with higher coefficient powers (or equal), thus we do not get any new generators (or too many in the case $a=b$).
\end{rmk}
We've seen how to construct cycle sets from ones of the same size and coprime classes. Now we show that this is enough to get all cycle sets from just ones of prime-power class:

Let $d=p_1^{a_1}\dots p_r^{a_r}$ be the prime decomposition of $p$ ($a_i>0$ and $p_i\neq p_j$), and write $\alpha_i=p_i^{a_i}$ for simplicity. We use techniques inspired by \cite{primitive} to construct new cycle sets from two with coprime Dehornoy's class.

Fix again a cycle set $S$ of size $n$ and class $d>1$, with germ $\overline G$.
By Proposition \ref{Sk}, given $k>0$ diving $d$, the subgroup $\overline kG$ generated by $kS=\{ks\mid s\in S\}$ is the germ of a structure group, and has for elements the matrices whose coefficient-powers are multiples of $k$.
\begin{lem}\label{sylow} Let $\beta_i=\frac{d}{\alpha_i}$ then 
\begin{enumerate}[label=(\roman*)]
\item For each $i$, $\beta_i\overline G$ is a $p_i$-Sylow of $\overline G$.
\item Two such subgroups commute (i.e. $\beta_i\overline G\cdot \beta_j\overline G=\beta_j\overline G\cdot \beta_i\overline G$).
\item $\overline G$ is the product of all those subgroups.
\end{enumerate}
\end{lem}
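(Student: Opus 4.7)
The plan is to use the I-structure bijection $\overline{\mathrm{cp}}\colon \overline G \xrightarrow{\sim} (\mathbb Z/d\mathbb Z)^n$ to convert the three statements into elementary cardinality arguments in $(\mathbb Z/d\mathbb Z)^n$. The key preliminary I would first establish is the identification
\[
\overline G^{[k]} = \{g \in \overline G : \overline{\mathrm{cp}}(g) \in (k\mathbb Z/d\mathbb Z)^n\}
\qquad\text{for every divisor } k \text{ of } d.
\]
The inclusion $\subseteq$ is clear from Proposition \ref{cpLamb} applied iteratively to the generators $s^{[k]}$; for the reverse inclusion I would invoke Proposition \ref{Sk} and the subsequent proposition, which give that $S^{[k]}$ is a cycle set of class $d/\gcd(d,k)$ whose germ is precisely $\overline G^{[k]}$. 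Its own I-structure then yields $|\overline G^{[k]}| = (d/\gcd(d,k))^n$.

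With this in hand, (i) is immediate: $|\overline G^{[\beta_i]}| = \alpha_i^n = p_i^{na_i}$, which is exactly the $p_i$-part of $|\overline G| = d^n = \prod_j p_j^{na_j}$, so $\overline G^{[\beta_i]}$ is a $p_i$-Sylow subgroup.

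For (ii), set $H = \overline G^{[\beta_i]}$ and $K = \overline G^{[\beta_j]}$ with $i \neq j$. Their orders are coprime, so Lagrange forces $H \cap K = \{1\}$ and $|HK| = p_i^{na_i} p_j^{na_j}$. By Proposition \ref{cpLamb} the coefficient-powers tuple of any product $gh$ with $g \in H$, $h \in K$ lies in the subgroup of $(\mathbb Z/d\mathbb Z)^n$ generated by multiples of $\beta_i$ and of $\beta_j$, namely $(\gcd(\beta_i,\beta_j)\mathbb Z/d\mathbb Z)^n$. Hence $HK \subseteq \overline G^{[\gcd(\beta_i,\beta_j)]}$; by the preliminary this larger subgroup has cardinality $(d/\gcd(\beta_i,\beta_j))^n = p_i^{na_i} p_j^{na_j}$, forcing $HK = \overline G^{[\gcd(\beta_i,\beta_j)]}$. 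The symmetric argument gives $KH = \overline G^{[\gcd(\beta_i,\beta_j)]}$ as well, so $HK = KH$ and the two Sylows commute.

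For (iii), I would iterate the argument of (ii) by induction on $r$, obtaining $\overline G^{[\beta_1]} \cdots \overline G^{[\beta_r]} = \overline G^{[\gcd(\beta_1,\dots,\beta_r)]}$. Since the $\alpha_i$ are pairwise coprime with $\mathrm{lcm}(\alpha_1,\dots,\alpha_r) = d$, one has $\gcd(\beta_1,\dots,\beta_r) = d/\mathrm{lcm}(\alpha_1,\dots,\alpha_r) = 1$, so this product equals $\overline G^{[1]} = \overline G$. The main obstacle is the preliminary identification of $\overline G^{[k]}$ with the set of elements whose cp is a multiple of $k$: this is the only step where the cycle-set structure (through Proposition \ref{Sk}) is genuinely used, and once past it everything reduces to routine arithmetic with cyclic subgroups of $(\mathbb Z/d\mathbb Z)^n$.
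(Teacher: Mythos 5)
Your proof is correct, but it takes a genuinely different route from the paper's in parts (ii) and (iii). Part (i) is the same cardinality count in both. For (ii), the paper argues at the level of generators: $s^{[\beta_i]}t^{[\beta_j]}$ has a $q^{\beta_j}$ on some row, hence is left-divisible by some $t'^{[\beta_j]}$, yielding an exchange relation $s^{[\beta_i]}t^{[\beta_j]}=t'^{[\beta_j]}s'^{[\beta_i]}$; you instead identify both $HK$ and $KH$ with the explicit subgroup $\overline G^{[\gcd(\beta_i,\beta_j)]}$ by comparing cardinalities, which is a complete set-level argument (and proves slightly more, namely that $HK$ is again one of the subgroups $\overline G^{[k]}$). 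For (iii), the paper passes through the additive brace structure $(\overline G,+)$ --- the $\overline G^{[\beta_i]}$ are the Sylows of that abelian group, so $\overline G$ is their direct sum additively, and Proposition \ref{abestruct} converts sums into products --- whereas you simply iterate the counting argument of (ii) and use $\gcd(\beta_1,\dots,\beta_r)=1$. Your preliminary identification of $\overline G^{[k]}$ with the set of elements whose $\overline{\mathrm{cp}}$-tuple lies in $(k\mathbb Z/d\mathbb Z)^n$ is exactly the assertion the paper makes in the sentence preceding the lemma, and your justification (inclusion via Proposition \ref{cpLamb}, equality by matching against the $(d/k)^n$ elements of the germ of $S^{[k]}$ supplied by Proposition \ref{Sk} and the I-structure) is sound. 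What your route buys is uniformity and self-containedness --- everything reduces to arithmetic in $(\mathbb Z/d\mathbb Z)^n$ --- at the cost of the explicit exchange relation on generators and the link to the brace-theoretic (additive) decomposition that the paper emphasizes.
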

\begin{proof}
(i) follows directly from the fact that that $\beta_i\overline G$ is a left ideal of $\overline G$ and has order $\alpha_i^n$.

(ii) and (iii) follows from Remark \ref{prodsum} and again the fact that $\beta_i\overline G$ is a left ideal of $\overline G$, and with corresponding cardinality.
\end{proof}

\begin{ex}\label{exdec}
The first example where $S$ is indecomposable but has class product of different primes is $n=8,d=6$ given by: 
\begin{flalign*}
\psi(s_1)=(12)(36)(47)(58),\quad\quad&
\psi(s_2)=(1658)(2347),\\
\psi(s_3)=(1834)(2765),\quad\quad&
\psi(s_4)=(12)(38)(45)(67),\\
\psi(s_5)=(1438)(2567),\quad\quad&
\psi(s_6)=(1856)(2743),\\
\psi(s_7)=(16)(23)(45)(78),\quad\quad&
\psi(s_8)=(14)(25)(36)(78)\end{flalign*}
Here, $\overline G$ decomposes as the Zappa--Szép product $3\overline G\bowtie_62\overline G$ of its 2- and 3- Sylow. If we denote by $(S_2,\psi_2)$ and $(S_3,\psi_3)$ their respective cycle set structure then we find: 
\begin{align*}
\psi_2(s_1')=\psi_2(s_2')&=(1476)(2583),\\
\psi_2(s_3')=\psi_2(s_6')&=(18)(27)(36)(45),\\
\psi_2(s_4')=\psi_2(s_5')&=(1674)(2385),\\
\psi_2(s_7')=\psi_2(s_8')&=(12)(34)(56)(78)
\end{align*}
and 
\begin{align*}
\psi_3(s_1'')=\psi_3(s_3'')=\psi_3(s_5'')=\psi_3(s_7'')&=(135)(264),\\
\psi_3(s_2'')=\psi_3(s_4'')=\psi_3(s_6'')=\psi_3(s_8'')&=(153)(246).
\end{align*}
\end{ex}
Lemma \ref{sylow} can be rephrased as $\overline G=\beta_1\overline  G\bowtie_d\dots\bowtie_d\beta_r\overline  G$. As the germ can be used to reconstruct the structure group and thus the cycle set, the following theorem summarizes these results from an enumeration perspective, that is constructing all solutions of a given size. 
\begin{thm}\label{sylows}
Any finite cycle set can be constructed from the Zappa--Szép product of the germs of cycle sets of class a prime power.
\end{thm}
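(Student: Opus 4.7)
The plan is to proceed by induction on the number $r$ of distinct prime divisors of $d = p_1^{a_1} \cdots p_r^{a_r}$. The base case $r = 1$ is immediate: $d$ itself is a prime power, so $S$ is (trivially) a Zappa--Szép product of a single cycle set of prime-power class. For $r \geq 2$, set $\alpha = p_1^{a_1}$ and $\beta = d/\alpha$, which are coprime. By Proposition \ref{Sk} the two subsets $S^{[\beta]}$ and $S^{[\alpha]}$ inherit cycle set structures, and by the proposition computing the class of $S^{[k]}$ as $d/\gcd(d,k)$, their Dehornoy classes are respectively $\alpha$ (a prime power) and $\beta$ (whose prime decomposition has exactly one fewer factor than that of $d$, making it eligible for the induction hypothesis).

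The heart of the argument is to establish the identity $S = S^{[\beta]} \bowtie S^{[\alpha]}$. Lemma \ref{sylow}(i) already identifies $\overline{G}^{[\beta]}$ with the $p_1$-Sylow of $\overline{G}$. For the complementary factor, each Sylow $\overline{G}^{[\beta_j]}$ with $j \geq 2$ is contained in $\overline{G}^{[\alpha]}$ since $\alpha$ divides each such $\beta_j$, and the cardinality count $\prod_{j \geq 2} p_j^{a_j n} = \beta^n = |\overline{G}^{[\alpha]}|$ upgrades this inclusion to the equality $\overline{G}^{[\alpha]} = \overline{G}^{[\beta_2]} \cdots \overline{G}^{[\beta_r]}$. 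The pairwise commutativity of Sylows granted by Lemma \ref{sylow}(ii) then propagates step by step to give $\overline{G}^{[\beta]} \cdot \overline{G}^{[\alpha]} = \overline{G}^{[\alpha]} \cdot \overline{G}^{[\beta]}$. Invoking the proposition stating that commuting germs yield $\bowtie$-compatible cycle sets, we obtain that $S^{[\beta]} \bowtie S^{[\alpha]}$ is a well-defined cycle set whose germ equals $\overline{G}^{[\beta]} \bowtie_d \overline{G}^{[\alpha]} = \overline{G}$ by Lemma \ref{sylow}(iii). Since the germ determines the cycle set on its atoms via Theorem \ref{condCS}, this Zappa--Szép product coincides with $S$ itself. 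Applying the induction hypothesis to $S^{[\alpha]}$ then expresses $S$ as an iterated Zappa--Szép product of cycle sets of prime-power class.

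The main obstacle will be cleanly identifying the abstract germ of $S^{[\beta]}$ coming from Proposition \ref{Sk}, which a priori lives in $\M_n^\alpha$, with the subgroup $\overline{G}^{[\beta]}$ of $\M_n^d$ appearing in Lemma \ref{sylow} and in Definition \ref{intprod}; the two meanings of the symbol $s^{[\beta]}$ must be made compatible under the embedding $\iota_\alpha^d$ sending $\zeta_\alpha$ to $\zeta_d^\beta$. Once this identification is verified (and symmetrically for $S^{[\alpha]}$), the Bézout step $u\beta + v\alpha \equiv 1 \pmod d$ of Algorithm \ref{algdia} exhibits each generator $s_i$ of $S$ concretely as the Zappa--Szép product of the images of $s_i^{[u]}$ in the $\beta$-germ and $s_i^{[v]}$ in the $\alpha$-germ, turning the equality $S = S^{[\beta]} \bowtie S^{[\alpha]}$ into an explicit formula rather than a mere consequence of germ-level reconstruction.
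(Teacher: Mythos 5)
Your proof is correct and follows essentially the same route as the paper: decompose the germ into its pairwise-commuting Sylow subgroups $\overline G^{[\beta_i]}$ (Lemma \ref{sylow}), recognize each as the germ of the prime-power-class cycle set $S^{[\beta_i]}$ (Proposition \ref{Sk}), and reconstruct $S$ from their product via the germ-to-monoid correspondence. Your inductive organization --- peeling off one prime at a time so that each step is a genuine two-factor Zappa--Szép product of coprime classes, with the cardinality count identifying $\overline G^{[\alpha]}$ with the product of the remaining Sylows --- is if anything slightly more careful than the paper's three-sentence argument, since the $\bowtie$-compatibility machinery of Section 4 is only stated for two factors.
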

\begin{proof}
Any cycle set is determined by its structure monoid, which can be recovered from the germ. By Lemma \ref{sylow} and the above construction, the germ can be decomposed and reconstructed from its Sylows, which also determine cycle sets by Proposition \ref{Sk}.
\end{proof}
\begin{rmk}
The class of the cycle set constructed will Algorithm \ref{algdia} will, in general, only be a divisor of the product of the prime-powers. This happens because nothing ensures that, for instance, the cycle set obtained is not trivial: we only know that $d_1d_2s$ is diagonal, but it is not necessarily minimal.
\end{rmk}
\begin{rmk}
This construction is similar to the matched product of braces $B_1\bowtie B_2$ appearing in \cite{matched,matched2}. Key differences are that we directly construct a cycle set with permutation group $B_1\bowtie B_2$ (whereas the authors of \cite{matched2} construct one over the set $B_1\bowtie B_2$) and that our construction doesn't rely on groups of automorphisms thanks to the natural embedding $\iota_k^{kl}$. Moreover, instead of classifying all braces, the existence of the germs suggests it is enough to classify braces with abelian group $(\mathbb Z/d\mathbb Z)^n$ for all $d$ and $n$ to recover all cycle sets.
\end{rmk}
\begin{cor}
Any cycle set is induced (in the sense of using the decomposability and Zappa--Szép product) by indecomposable cycle sets of smaller size and class, both powers of the same prime.
\end{cor}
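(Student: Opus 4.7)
The plan is to combine Theorem~\ref{sylows} with iterated decomposition and Lemma~\ref{indN}. Given a cycle set $S$, I would first invoke Theorem~\ref{sylows} to present $S$ (or rather its germ) as a Zappa--Szép product of cycle sets whose classes are pairwise coprime prime powers $p_1^{a_1},\dots,p_r^{a_r}$. This reduces the problem to the single-prime case: it suffices to prove that any cycle set of prime-power class is induced, in the sense of iterated decomposability, by indecomposable cycle sets whose class and size are powers of the same prime.

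Next I would argue by induction on the size. Fix a cycle set $S'$ of class $p^a$. If $S'$ is indecomposable, Lemma~\ref{indN} applies: since the class is a prime power $p^a$, the size $|S'|$ must also be a power of $p$, and we are done. Otherwise, by the definition of decomposability we obtain a proper $\mathcal{G}_{S'}$-invariant partition $S'=X\sqcup Y$, and both $(X,*_{|X})$ and $(Y,*_{|Y})$ are cycle sets of strictly smaller size. I would then verify that the class of each piece divides $p^a$, hence is again a power of $p$ (allowing the trivial case $p^0=1$, which corresponds to a trivial cycle set and is a valid base case). The inductive hypothesis applied to $X$ and $Y$ then delivers the desired decomposition.

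The main conceptual step to check carefully is the class-divisibility claim for $X$ and $Y$: I would argue that the $\mathcal{G}$-invariance of $X$ guarantees that the restrictions $\psi(s)|_X$ for $s\in X$ generate a subgroup of $\mathcal{G}_{S'}$, so the $[d]$-th power relation $s^{[d]}=\mathrm{id}$ for $d=p^a$ descends to $X$; consequently the class of $X$ divides $p^a$, and likewise for $Y$. Once this is in place, termination of the recursion is automatic since each step strictly decreases $|S'|$.

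I expect the only subtle point to be the bookkeeping in the first step: the Zappa--Szép factors produced by Theorem~\ref{sylows} share the same underlying set as $S$, so a naive induction on the pair (size, class) could be ambiguous. I would resolve this by making the induction in two stages, first on $r$ (the number of distinct primes dividing $d$), reducing to prime-power class via Theorem~\ref{sylows}, and then on the size $n$, using decomposability plus Lemma~\ref{indN}. With that separation, the corollary follows cleanly and the construction is explicit: the final building blocks are exactly the indecomposable cycle sets of class and size both powers of a common prime.
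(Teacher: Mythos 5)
Your proposal is correct and follows essentially the same route as the paper: reduce to prime-power class via Theorem~\ref{sylows}, iterate decomposability on each factor, observe that the class of each indecomposable piece divides the prime power (the paper phrases this via the block-diagonal form of the structure group, you via descent of the $s^{[d]}$ relation to a $\mathcal G$-invariant part — the same fact), and conclude with Lemma~\ref{indN}. Your version is merely more explicit about the two-stage induction and the divisibility bookkeeping, which the paper leaves implicit.
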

\begin{proof}
Let $S$ be obtained from the germ as an internal product of $S_1,\dots,S_r$ of classes respectively $p_1^{a_1},\dots,p_r^{a_r}$ with distinct primes. Then, consider a decomposition of each $S_i$ as indecomposable cycle sets: so up to a change of enumeration, the matrices in the structure group of $S_i$ are diagonal-by-block with each block corresponding to a cycle set, so with class dividing the class $p_i^{a_i}$ of $S_i$, thus also a power of $p_i$. By Lemma \ref{indN}, the size of those indecomposable cycle sets must also be powers of $p_i$.
\end{proof}
However, as far as the author knows, there is no "nice" way, given two cycle sets, to construct all cycle sets that decompose on those two, thus the above result is an existence result but not a constructive one, unlike the Zappa--Szép product previously used.
\begin{rmk}
Starting from a cycle set, we first write it as a Zappa--Szép product of its Sylows and then decompose each Sylow-subgroup if the associated cycle set is decomposable. If one proceeds the other way, first decomposing and then looking at the Sylows of each cycle set of the decomposition, we obtain less information. For instance, if $S=\{s_1,\dots,s_6\}$ with $\psi(s_i)=(1\dots 6)$ for all $i$, then $S$ is not decomposable, but the cycle sets obtained from its Sylows $2S$ and $3S$ are decomposable ($\psi_2(s_i)=(14)(25)(36)$ and $\psi_3(s_i)=(135)(246)$ for all $i$, having respectively 3 and 2 orbits). 
\end{rmk}
\begin{ex} In Example \ref{exdec}, $3$ does not divide $8$ so $S_3$ has to be decomposable, and indeed it decomposes as $S_3=\{s_1'',s_3'',s_5''\}\sqcup\{s_2'',s_4'',s_6''\}\sqcup\{s_7'',s_8''\}$.
\end{ex}
\begin{cor}
Let $N(n,d)$ be the number of cycle sets of size $n$ and of class a divisor of $d=p_1^{a_1}\dots p_r^{a_r}$. Then we have: $N(n,d)\leq \prod_i N(n,p_i^{a_i})$.
\end{cor}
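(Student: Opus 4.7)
The plan is to exhibit an injective map from the set of cycle sets of size $n$ and class dividing $d$ into the Cartesian product of the sets counted by $N(n,p_i^{a_i})$, after which the bound is immediate.

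Given such a cycle set $S$ of size $n$ with germ $\overline{G}$, Lemma \ref{sylow} provides for each prime $p_i$ a canonical Sylow $\overline{G}^{[\beta_i]}$ (with $\beta_i=d/p_i^{a_i}$), and Proposition \ref{Sk} turns each $\overline{G}^{[\beta_i]}$ into the germ of a cycle set structure on the set $S^{[\beta_i]}$, which has cardinality $n$ and class a divisor of $p_i^{a_i}$. Since the underlying sets $S^{[\beta_i]}$ are all in canonical bijection with the index set $\{1,\dots,n\}$ via $s_j \mapsto s_j^{[\beta_i]}$, each of these structures is, up to the fixed enumeration, a cycle set of size $n$ counted by $N(n,p_i^{a_i})$.

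This yields a map $\Phi \colon \{\text{cycle sets of size } n \text{ and class dividing }d\} \to \prod_{i=1}^r \{\text{cycle sets of size } n \text{ and class dividing }p_i^{a_i}\}$, sending $S$ to the tuple of cycle set structures on $S^{[\beta_1]},\dots,S^{[\beta_r]}$. To prove the inequality it suffices to verify that $\Phi$ is injective. This is exactly the content of Theorem \ref{sylows}: starting from the tuple $(S^{[\beta_1]},\dots,S^{[\beta_r]})$, Lemma \ref{sylow}(ii)--(iii) shows that the germ $\overline{G}$ is reconstructed as the iterated Zappa--Szép product $\overline{G}^{[\beta_1]}\bowtie_d\cdots\bowtie_d\overline{G}^{[\beta_r]}$; since the germ together with the class determines the structure monoid and hence the cycle set, $S$ is recovered from its image under $\Phi$.

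The only step requiring a bit of care is the injectivity, and in particular the check that the reconstruction recipe coming from Algorithm \ref{algdia} and the Product Theorem really returns the original $\overline{G}$ (rather than just some Zappa--Szép product with the correct factors); but this is precisely what Lemma \ref{sylow}(ii)--(iii) guarantees, by exhibiting the Sylows as already commuting inside $\overline{G}$ and spanning it. Once injectivity is established, taking cardinalities yields
\[
N(n,d) \;\leq\; \prod_{i=1}^{r} N(n,p_i^{a_i}),
\]
which is the claimed bound.
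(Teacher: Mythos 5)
Your argument is correct and is exactly the route the paper intends: the corollary is stated as an immediate consequence of Lemma \ref{sylow} and Theorem \ref{sylows} (the paper gives no separate proof), namely that $S\mapsto(S^{[\beta_1]},\dots,S^{[\beta_r]})$ is injective because the germ is recovered as the product of its Sylows, whence the count is bounded by the product of the counts. Your write-up simply makes explicit the well-definedness (each $S^{[\beta_i]}$ has size $n$ and class dividing $p_i^{a_i}$) and the injectivity that the paper leaves implicit.
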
 
For $n=10$, we find that there is approximately 67\% of cycle sets that have class a prime-power ($\sim$ 3.3 out of $\sim$ 4.9 millions). We hope that this number greatly reduces as $n$ increases (as hinted by the previous values, for $n=4$ it is $99\%$), as more values of $d$ are possible (Conjecture \ref{conj1}).
\emergencystretch=1em
\printbibliography
\end{document}